\newtheorem{thm}{Theorem}[section]
\newtheorem{lem}[thm]{Lemma}
\newtheorem{prop}[thm]{Proposition}
\newtheorem{question}[thm]{Question}
\newtheorem{claim}[thm]{Claim}
\theoremstyle{remark}
\theoremstyle{definition}
\newtheorem{defn}[thm]{Definition}
\numberwithin{equation}{section}
\begin{document}

\vfuzz0.5pc
\hfuzz0.5pc % Don't bother to report 
          % overfull boxes if overage is < 6pt

\newcommand{\claimref}[1]{Claim \ref{#1}}
\newcommand{\thmref}[1]{Theorem \ref{#1}}
\newcommand{\propref}[1]{Proposition \ref{#1}}
\newcommand{\lemref}[1]{Lemma \ref{#1}}
\newcommand{\coref}[1]{Corollary \ref{#1}}
\newcommand{\remref}[1]{Remark \ref{#1}}
\newcommand{\conjref}[1]{Conjecture \ref{#1}}
\newcommand{\questionref}[1]{Question \ref{#1}}
\newcommand{\defnref}[1]{Definition \ref{#1}}
\newcommand{\secref}[1]{\S \ref{#1}}
\newcommand{\ssecref}[1]{\ref{#1}}
\newcommand{\sssecref}[1]{\ref{#1}}

\newcommand{\RED}{{\mathrm{red}}}
\newcommand{\tors}{{\mathrm{tors}}}
\newcommand{\eq}{\Leftrightarrow}

\newcommand{\mapright}[1]{\smash{\mathop{\longrightarrow}\limits^{#1}}}
\newcommand{\mapleft}[1]{\smash{\mathop{\longleftarrow}\limits^{#1}}}
\newcommand{\mapdown}[1]{\Big\downarrow\rlap{$\vcenter{\hbox{$\scriptstyle#1$}}$}}
\newcommand{\smapdown}[1]{\downarrow\rlap{$\vcenter{\hbox{$\scriptstyle#1$}}$}}

\newcommand{\A}{{\mathbb A}}
\newcommand{\I}{{\mathcal I}}
\newcommand{\J}{{\mathcal J}}
\newcommand{\CO}{{\mathcal O}}
\newcommand{\C}{{\mathcal C}}
\newcommand{\BC}{{\mathbb C}}
\newcommand{\BQ}{{\mathbb Q}}
\newcommand{\m}{{\mathcal M}}
\newcommand{\h}{{\mathcal H}}
\newcommand{\Z}{{\mathcal Z}}
\newcommand{\BZ}{{\mathbb Z}}
\newcommand{\W}{{\mathcal W}}
\newcommand{\Y}{{\mathcal Y}}
\newcommand{\T}{{\mathcal T}}
\newcommand{\BP}{{\mathbb P}}
\newcommand{\CP}{{\mathcal P}}
\newcommand{\G}{{\mathbb G}}
\newcommand{\BR}{{\mathbb R}}
\newcommand{\D}{{\mathcal D}}
\newcommand{\DD}{{\mathcal D}}
\newcommand{\LL}{{\mathcal L}}
\newcommand{\f}{{\mathcal F}}
\newcommand{\E}{{\mathcal E}}
\newcommand{\BN}{{\mathbb N}}
\newcommand{\N}{{\mathcal N}}
\newcommand{\K}{{\mathcal K}}
\newcommand{\R} {{\mathbb R}}
\newcommand{\PP}{{\mathbb P}}
\newcommand{\Pp}{{\mathbb P}}
\newcommand{\BF}{{\mathbb F}}
\newcommand{\QQ}{{\mathcal Q}}
\newcommand{\closure}[1]{\overline{#1}}
\newcommand{\EQ}{\Leftrightarrow}
\newcommand{\imply}{\Rightarrow}
\newcommand{\isom}{\cong}
\newcommand{\embed}{\hookrightarrow}
\newcommand{\tensor}{\mathop{\otimes}}
\newcommand{\wt}[1]{{\widetilde{#1}}}
\newcommand{\ol}{\overline}
\newcommand{\ul}{\underline}

\newcommand{\bs}{{\backslash}}
\newcommand{\CS}{{\mathcal S}}
\newcommand{\CA}{{\mathcal A}}
\newcommand{\Q}{{\mathbb Q}}
\newcommand{\F}{{\mathcal F}}
\newcommand{\sing}{{\text{sing}}}
\newcommand{\U} {{\mathcal U}}
\newcommand{\B}{{\mathcal B}}
\newcommand{\X}{{\mathcal X}}

% Javier-Macro
\newcommand{\ECS}[1]{E_{#1}(X)}
\newcommand{\CV}[2]{{\mathcal C}_{#1,#2}(X)}

\newcommand{\rank}{\mathop{\mathrm{rank}}\nolimits}
\newcommand{\codim}{\mathop{\mathrm{codim}}\nolimits}
\newcommand{\Ord}{\mathop{\mathrm{Ord}}\nolimits}
\newcommand{\Var}{\mathop{\mathrm{Var}}\nolimits}
\newcommand{\Ext}{\mathop{\mathrm{Ext}}\nolimits}
\newcommand{\EXT}{\mathop{{\mathcal E}\mathrm{xt}}\nolimits}
\newcommand{\Pic}{\mathop{\mathrm{Pic}}\nolimits}
\newcommand{\Spec}{\mathop{\mathrm{Spec}}\nolimits}
\newcommand{\Jac}{\mathop{\mathrm{Jac}}\nolimits}
\newcommand{\Div}{\mathop{\mathrm{Div}}\nolimits}
\newcommand{\sgn}{\mathop{\mathrm{sgn}}\nolimits}
\newcommand{\supp}{\mathop{\mathrm{supp}}\nolimits}
\newcommand{\Hom}{\mathop{\mathrm{Hom}}\nolimits}
\newcommand{\Sym}{\mathop{\mathrm{Sym}}\nolimits}
\newcommand{\nilrad}{\mathop{\mathrm{nilrad}}\nolimits}
\newcommand{\Ann}{\mathop{\mathrm{Ann}}\nolimits}
\newcommand{\Proj}{\mathop{\mathrm{Proj}}\nolimits}
\newcommand{\mult}{\mathop{\mathrm{mult}}\nolimits}
\newcommand{\Bs}{\mathop{\mathrm{Bs}}\nolimits}
\newcommand{\Span}{\mathop{\mathrm{Span}}\nolimits}
\newcommand{\IM}{\mathop{\mathrm{Im}}\nolimits}
\newcommand{\Hol}{\mathop{\mathrm{Hol}}\nolimits}
\newcommand{\End}{\mathop{\mathrm{End}}\nolimits}
\newcommand{\CH}{\mathop{\mathrm{CH}}\nolimits}
\newcommand{\Exec}{\mathop{\mathrm{Exec}}\nolimits}
\newcommand{\SPAN}{\mathop{\mathrm{span}}\nolimits}
\newcommand{\birat}{\mathop{\mathrm{birat}}\nolimits}
\newcommand{\cl}{\mathop{\mathrm{cl}}\nolimits}
\newcommand{\rat}{\mathop{\mathrm{rat}}\nolimits}
\newcommand{\Bir}{\mathop{\mathrm{Bir}}\nolimits}
\newcommand{\Rat}{\mathop{\mathrm{Rat}}\nolimits}
\newcommand{\aut}{\mathop{\mathrm{aut}}\nolimits}
\newcommand{\Aut}{\mathop{\mathrm{Aut}}\nolimits}
\newcommand{\eff}{\mathop{\mathrm{eff}}\nolimits}
\newcommand{\nef}{\mathop{\mathrm{nef}}\nolimits}
\newcommand{\amp}{\mathop{\mathrm{amp}}\nolimits}
\newcommand{\DIV}{\mathop{\mathrm{Div}}\nolimits}
\newcommand{\Bl}{\mathop{\mathrm{Bl}}\nolimits}
\newcommand{\Cox}{\mathop{\mathrm{Cox}}\nolimits}
\newcommand{\NE}{\mathop{\mathrm{NE}}\nolimits}
\newcommand{\NM}{\mathop{\mathrm{NM}}\nolimits}
\newcommand{\Gal}{\mathop{\mathrm{Gal}}\nolimits}
\newcommand{\coker}{\mathop{\mathrm{coker}}\nolimits}
\newcommand{\ch}{\mathop{\mathrm{ch}}\nolimits}

\title{$\A^1$ Curves on Log K3 Surfaces}

\author[X.~Chen]{Xi Chen${}^{\dagger}$}
\address{632 Central Academic Building\\
University of Alberta\\
Edmonton, Alberta T6G 2G1, Canada}
\email{xichen@math.ualberta.ca}

\author[Y.~Zhu]{Yi Zhu}
\address{Pure Mathematics\\
Univeristy of Waterloo\\
Waterloo, Ontario N2L3G1, Canada}
\email{yi.zhu@uwaterloo.ca}
%%%\date{Nov 20, 2001}

\date{February 5, 2016}

% % 
\thanks{${}^{\dagger}$ Research partially supported by NSERC 262265.}
% % 
\keywords{K3 surface, Log Varieties, Rational Curves}
% % 
\subjclass{Primary 14J28; Secondary 14E05}
% % 
\begin{abstract}
%We prove that there are infinitely many $\A^1$ curves on a log K3 surface $(X,D)$ if and only if $(X,D)$ has a log K3 Iitaka model. To prove the existence of infinitely many $\A^1$ curves on log K3 surfaces of Iitaka type $\mathrm{II}_{\mathrm{b}\text{-}\mathrm{xiii}}$, we make use Keel-McKernan's result on the rational connectedness of log del Pezzo surfaces.

In this paper, we study $\A^1$ curves on log K3 surfaces. We classify all genuine log K3 surfaces of type II which admits countably infinite $\A^1$ curves.
\end{abstract}
% % 

\maketitle

\section{Introduction}\label{LOGK3RATSECINTRO}

From the point of view of birational geometry, $\A^1$ curves play the roles for log varieties as rational curves do for projective varieties. However, much less is known in the log world, even in two dimensional case. $\A^1$ curves on log varieties with negative log Kodaira dimension are studied in \cite{Miyanishi-T2,K-M,CZ,A1,Z3}.

Inspired by the recent progress on the existence of countably many rational curves on a projective K3 surface (\cite{BHT}, \cite{C-L} and \cite{L-L}), we propose the following question studying $\A^1$ curves on log K3 surfaces classified by S. Iitaka \cite{I} and D. Q. Zhang 
\cite{Z}.

\begin{question}\label{LOGK3RATMAINQUESTION}
For which log K3 surfaces $(X,D)$, are there infinitely many $\A^1$ curves on $X\backslash D$?
\end{question}

A {\em log K3} surface, in the sense of Iitaka, is a log smooth projective pair $(X, D)$ satisfying
$h^0(K_X+D) = 1$
and $\kappa(X,D) = q(X,D) = 0$. According to Iitaka's classification, there are two types of log K3's:
\begin{description}
\item[Type I] $X$ is birational to a projective K3 surface;
\item[Type II] $X$ is a smooth projective rational surface.
\end{description}

In this paper, we are mainly interested in a special class of log K3 surfaces:

\begin{defn}
A {\em genuine log K3} surface is a log smooth projective surface pair $(X,D)$ such that 
\begin{enumerate}
\item $K_X+D = 0$ in $\Pic(X)$;
\item $q(X,D)=h^0(\Omega^1_X(\log D)) = 0$. 
\end{enumerate}
\end{defn}

In Iitaka's classification, genuine log K3 surfaces serve as the building blocks of log K3 surfaces. Of course, a genuine log K3 surface of type I is simply a projective K3 surface without boundary.
It has been proved by J. Li and C. Liedtke that there are infinitely many rational curves on almost every projective K3 surface $X$ (provided that $\rank_\BZ\Pic(X)$ is odd or $\rank_\BZ\Pic(X) \ge 5$
or $X$ has an elliptic fiberation) \cite{L-L}. So we have a nearly complete answer to Question \ref{LOGK3RATMAINQUESTION} for genuine log K3's of type I. In this paper, we study this question for genuine log K3's of type II.

Since the existence of $\A^1$ curves is essentially a property of the open part $X\backslash D$ of a log variety $(X,D)$, we consider $(X_1, D_1)$ and $(X_2, D_2)$ to be \emph{log isomorphic} 
if there exists a birational map $f: X_1\dashrightarrow X_2$
inducing an isomorphism $f: X_1 \backslash D_1 \cong X_2\backslash D_2$ and we call such $f$ a
{\em log isomorphism}. 
For genuine log K3's of type II, we have the following classification under log isomorphisms. 

\begin{thm}\label{LOGK3RATTHMLOGISOMK3}
Every genuine log K3 surface $(X,D)$ of type $\mathrm{II}$ is log isomorphic to one of the following genuine log K3 surfaces
$(\widehat{X}, \widehat{D})$:
\begin{enumerate}
\item[C0.] $\widehat{D}$ is a smooth elliptic curve;
\item[C1.] $\widehat{D}$ is a nodal rational curve;
\item[C2.] $\widehat{D} = \widehat{D}_1 + \widehat{D}_2 + ... + \widehat{D}_n$ is a circular boundary (see below)
satisfying $\widehat{D}_i^2 \le -2$ for $i\ne 1$ and $\widehat{D}_1^2 \ne 0,-1$;
\item[C3.] $\widehat{D} = \widehat{D}_1 + \widehat{D}_2$ is a circular
boundary satisfying $\widehat{D}_1^2 \ne -1$ and $\widehat{D}_2^2 = 0$;
\item[C4.] $\widehat{D} = \widehat{D}_1 + \widehat{D}_2$ is a circular boundary satisfying $\widehat{D}_1^2 > 0$ and $\widehat{D}_2^2 > 0$.
\end{enumerate}
\end{thm}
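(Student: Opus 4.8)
\emph{Proof sketch.} The plan is to argue in three stages: first classify $D$ as a curve; then reinterpret the condition $q(X,D)=0$; and finally normalise a circular boundary by explicit log isomorphisms. For the first stage, $K_X+D=0$ and adjunction give $2p_a(D)-2=D\cdot(D+K_X)=0$, so $p_a(D)=1$, while log smoothness makes $D$ a reduced divisor with normal crossings. From $0\to\mathcal O_X(K_X)\to\mathcal O_X\to\mathcal O_D\to 0$ and $h^0(K_X)=h^1(K_X)=0$ (as $X$ is rational) one gets $h^0(\mathcal O_D)=1$, so $D$ is connected; a connected normal crossings curve of arithmetic genus $1$ is exactly a smooth elliptic curve (form C0), an irreducible rational curve with one node (form C1), or a cycle $D_1+\dots+D_n$ with $n\ge 2$ of smooth rational curves meeting transversally, i.e.\ a circular boundary. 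Hence only the circular case needs work.

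For the second stage, the residue sequence $0\to\Omega^1_X\to\Omega^1_X(\log D)\to\bigoplus_i\mathcal O_{D_i}\to 0$ together with $h^0(\Omega^1_X)=h^1(\mathcal O_X)=0$ gives $q(X,D)=n-\rank\langle[D_1],\dots,[D_n]\rangle$ inside $\Pic(X)$ (the connecting map sends the $i$-th generator to $[D_i]$), so $q(X,D)=0$ is equivalent to linear independence of the $[D_i]$. By resolving the birational map, any two log-isomorphic pairs are joined by blow-ups at boundary points and contractions of boundary $(-1)$-curves; staying within genuine log K3 surfaces (keeping $K+D=0$) these reduce, for a circular boundary, to two moves: (i) blow up a node $D_i\cap D_{i+1}$, inserting a $(-1)$-curve into the cycle and lowering $D_i^2$ and $D_{i+1}^2$ each by one; and (ii) contract a boundary component that is a $(-1)$-curve meeting exactly two other boundary components, lowering $n$ (when this reaches $n=1$ we land in form C1). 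Both moves preserve $K+D=0$ and $q(X,D)$, so after applying (ii) as long as possible we may assume the pair is \emph{relatively minimal}: no boundary $(-1)$-curve meets exactly two others.

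For the third stage one must show that a relatively minimal circular boundary of length $n\ge 2$ with independent components is one of C2, C3, C4; note each $D_i^2$ is then $\le -2$ or $\ge 0$. If $n=2$ the boundary is a bigon $D_1+D_2$ with $D_1\cdot D_2=2$ and no $D_i^2=-1$, and reading off $(D_1^2,D_2^2)$ yields form C4 (both positive), C3 (some entry $0$), or C2 (otherwise). Assume $n\ge 3$. If some $D_i^2=0$, then $|D_i|$ is a base-point-free pencil and $\phi=\phi_{|D_i|}\colon X\to\PP^1$ is a $\PP^1$-fibration with $D_i$ a fibre; since $-K_X\cdot D_i=2$, the two cyclic neighbours of $D_i$ are sections and the remaining boundary components form a chain inside one reducible fibre. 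Elementary transformations of this ruled surface --- each a move (i) followed by a move (ii) --- shift self-intersections around the cycle until two $0$-curves become adjacent, and then blowing up the node between two adjacent $0$-curves and contracting twice strictly lowers $n$; inductively we reach $n\le 2$ or a pair with no $0$-component. If no $D_i^2=0$, I claim at most one $D_i^2$ is positive. Indeed two positive components would have to be cyclically adjacent, for otherwise they span a positive-definite rank-$2$ sublattice of $\Pic(X)\otimes\Q$, contradicting the signature $(1,\rho-1)$ from the Hodge index theorem; and adjacent positive components would both have self-intersection $1$, else their $2\times2$ Gram matrix is positive definite. But this is impossible: for $n=3$ the resulting Gram matrix has two equal rows, contradicting $q=0$; and for $n\ge 4$, blowing up the node shared by the two $1$-curves turns them into two $0$-curves, one of which becomes an irreducible fibre of the $\PP^1$-fibration defined by the other and yet still meets another, negative, vertical boundary component --- a contradiction. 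Hence at most one $D_i^2$ is positive and all the rest are $\le -2$, which is precisely form C2 (that component, or any component, serving as $\widehat D_1$).

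The main obstacle is the fibration subcase ($n\ge 3$, some $D_i^2=0$): one must verify that the elementary transformations along the reducible fibre can always be organised to strictly decrease $n$ without destroying the circular structure, which requires a careful analysis of the singular fibre of $\phi$ --- a tree of rational curves carrying the chain of vertical boundary components --- and of how the contractions propagate self-intersections around the cycle. The remaining $n\ge 3$ subcase, the boundary classification in stage one, and the $n=2$ case are comparatively routine once the Hodge-index input and the fibration contradiction above are established.
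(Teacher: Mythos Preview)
Your proof is correct and shares its skeleton with the paper's: both induct on the number of boundary components, first contracting any boundary $(-1)$-curve, then handling a relatively minimal cycle by splitting into the cases ``some $D_i^2=0$'' and ``no $D_i^2=0$'', using elementary transformations (what the paper calls a \emph{pivot}) in the first case and the Hodge index theorem in the second.

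Two places where the paper is more direct. First, once $D_i$ and $D_j$ with $D_i^2,D_j^2>0$ are linearly independent in $H^2(X,\BQ)$, the strict Hodge inequality $D_i^2 D_j^2<(D_iD_j)^2$ forces $D_iD_j\ge 2$, which is impossible in a cycle of length $n\ge 3$ where any two components meet in at most one point; this disposes of the ``two positive components'' case in one line, so your separate treatment of $n=3$ and the fibration contradiction for $n\ge 4$ are not needed. Second, what you flag as the ``main obstacle'' (some $D_i^2=0$ with $n\ge 3$) is in the paper the most mechanical step: pivoting at $D_1$ with $D_1^2=0$ transforms the type $(0,\lambda_2,\lambda_3,\ldots,\lambda_n)$ into $(0,\lambda_2-1,\lambda_3,\ldots,\lambda_n+1)$, so after finitely many pivots the neighbour $D_2$ has $D_2^2=-1$ and can be contracted directly. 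Your route via two adjacent $0$-curves followed by a blow-up and two contractions is valid but is one pivot plus one extra blow-up longer than necessary. In short, the fibration picture you set up is correct but unneeded --- the pivot is a purely numerical operation on the self-intersection type, and tracking that type suffices.
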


We have a complete answer to Question \ref{LOGK3RATMAINQUESTION} for genuine log K3 surfaces $(X,D)$ of Iitaka type $\text{II}$ by our main theorem:

\begin{thm}[$\A^1$ curves on genuine log K3's of Iitaka type $\text{II}$]\label{LOGK3RATTHM001}
Let $(X, D)$ be a genuine log K3 surface of type $\text{II}$. Then there are countably many $\A^1$ curves in $X\backslash D$ if and only if $(X,D)$ is log isomorphic to one of C0-C3 in Theorem \ref{LOGK3RATTHMLOGISOMK3}.
\end{thm}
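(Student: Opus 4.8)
The plan is to reduce everything to \thmref{LOGK3RATTHMLOGISOMK3}: the set of $\A^1$ curves depends only on $X\bs D$, hence only on the log isomorphism class, so it suffices to prove, for each normal form, that C0--C3 carry a countably infinite family of $\A^1$ curves while C4 carries only finitely many. (That the family is at most countable for any genuine log K3 surface follows from $\kappa(X,D)=0$, which rules out a positive-dimensional family of $\A^1$ curves; I would record this first.)

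For the ``only if'' direction I would rule out C4 by combining the Hodge index theorem with adjunction. Since the boundary cycle has $D_1^2\ge 1$ and $D_2^2\ge 1$, one gets $(-K_X)^2=D^2=D_1^2+D_2^2+2D_1\cdot D_2\ge 6$, so $-K_X=D$ is big; consequently $X$ has only finitely many negative curves and, for every fixed $N$, only finitely many classes of irreducible curves of $(-K_X)$-degree at most $N$. Let $C$ be an $\A^1$ curve and $q$ the unique point of $C\cap D$. If $q$ is a smooth point of $D$, then $C$ is disjoint from one of the big, nef curves $D_i$, so $C^2<0$ by Hodge index; feeding this into $C^2+K_X\cdot C=2p_a(C)-2$ with $K_X=-D$ shows $C$ is a $(-1)$-curve meeting $D$ transversally in one point. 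If $q$ is a node $D_1\cap D_2$, then $C\bs(C\cap D)\cong\A^1$ forces $C$ to be unibranch at $q$, so its branch is tangent to at most one of $D_1,D_2$; writing $m=\mult_q C$ and taking $D_j$ a component to which $C$ is not tangent at $q$, we get $C\cdot D_j=m$, hence $C^2\le m^2$ by Hodge index, while adjunction together with $\delta_q(C)\ge\binom{m}{2}$ gives $C^2\ge C\cdot D+m(m-1)-2$; combining these yields $C\cdot D\le m+2$, and since $C$ meets $D_i$ only at $q$ we also have $C\cdot D_i\ge m$, which forces $m\le 2$ and $-K_X\cdot C=C\cdot D\le 4$. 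In either case $C$ lies in one of finitely many classes, and in each such class the requirement that $C$ meet $D$ in a single point leaves only finitely many $C$; hence C4 carries only finitely many $\A^1$ curves.

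For the ``if'' direction I would treat C0--C1 and C2--C3 by similar but separate constructions. When $D$ is irreducible — elliptic in C0, nodal rational in C1 — the group $\Pic^0(D)$ is one-dimensional ($E$ or $\G_m$), and an $\A^1$ curve of class $\beta$ meeting $D$ at $q$ forces $(\beta\cdot D)\,q\equiv\beta|_D$ in $\Pic(D)$, which for each $\beta$ with $\beta\cdot D\ne 0$ pins $q$ down to finitely many choices; the substance is to exhibit infinitely many classes $\beta$ for which this contact is realized by an irreducible rational curve. I would do this by a degeneration-and-smoothing argument in the tradition of Bogomolov--Hassett--Tschinkel, Chen--Lewis and Li--Liedtke, adapted to the log setting: one starts from a reducible nodal curve assembled from an already-constructed $\A^1$ curve together with boundary-supported chains, and smooths it inside $X\bs D$; the vanishing $K_X+D=0$ keeps the relevant log deformations unobstructed enough that the smoothing persists as an $\A^1$ curve, and iterating reaches infinitely many classes. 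When $D$ is a cycle with a zero (C3) or non-positive (C2) component the picture is more explicit: in C3 the pencil $|D_2|$, with $D_2^2=0$, gives a fibration $X\to\PP^1$ on which $D_1$ is a bisection, and the components of its reducible fibres together with the fibres tangent to $D_1$ supply an initial batch of $\A^1$ curves, which one bootstraps by attaching fibre components and boundary chains and smoothing; C2 is analogous, using the chain of negative curves in $D$.

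I expect the ``if'' direction to be the main obstacle — precisely, the existence of irreducible rational curves with a single, high-order contact with $D$ in infinitely many homology classes. The condition on $\Pic^0(D)$ (or the fibration structure, in C2--C3) only singles out the candidate classes; proving they are represented, and controlling the smoothing so that the limiting curve still meets $D$ in exactly one point with the correct local picture, is the log analogue of the delicate part of the existence theory for rational curves on K3 surfaces, and is where I would expect most of the technical work to lie.
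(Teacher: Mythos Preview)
Your ``only if'' argument for C4 takes a different route from the paper and is essentially correct. The paper instead proves a general necessary condition (Lemma~4.1): if $X\bs D$ carries infinitely many $\A^1$ curves, then on \emph{every} log-isomorphic model $(\widehat X,\widehat D)$ with $\mu(\widehat D)\ge 2$ there exists a nef and big class $\widehat L$ orthogonal to all but two adjacent components of $\widehat D$; blowing up the two nodes of $D$ in C4 yields a cycle of four components each of self-intersection $\ge -1$, and one checks this violates the condition. Your direct Hodge-index/adjunction bound $-K_X\cdot C\le 4$ is more elementary, but to finish you must still invoke that a weak del Pezzo has polyhedral Mori cone (so bounded $-K_X$-degree and bounded $C^2$ leave finitely many classes of irreducible curves), plus a rigidity statement to get from finitely many classes to finitely many curves.

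Your ``if'' direction has genuine gaps. For C2 you say the argument is ``analogous [to C3], using the chain of negative curves in $D$'', but this misses the main input entirely. Contracting $D_2+\cdots+D_n$ yields a log del Pezzo surface $\overline X$, and the paper's source of rational curves is the Keel--McKernan theorem: $\overline X_{\mathrm{sm}}$ is rationally connected, so for suitable ample $A$ the Severi variety $V_{A,0}$ has the expected dimension and a general member misses the singular point. No C3-style pencil is available here since no $D_i^2=0$. With this supply in hand, the paper then runs a ``bend-and-not-break'' induction (Propositions~4.5 and 4.8, controlled by the degeneration Lemma~4.7) to push the tangency at a node $p\in D_1\cap D_2$ up to full order $a=A\cdot D$; your sketch gives no mechanism for either step.

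For C0/C1, your identification of the constraint $(\beta\cdot D)q\equiv\beta|_D$ in $\Pic(D)$ is right, but a smoothing of a reducible curve built from an old $\A^1$ curve plus boundary chains need not stay irreducible with a single preimage over $D$. The paper avoids smoothing altogether: it chooses $p\in D$ with $ap=i_D^*A$ but $mp\notin i_D^*\Pic(X)$ for all $m<\sqrt{a/|G_{\mathrm{tors}}|}$ (Lemma~3.2), so that any component of a curve in $|A|$ meeting $D$ only at $p$ is forced to have large $D$-degree; working on a model where $|C+mF|$ consists of unions of smooth rational curves then gives the $\A^1$ curves directly. For C3 likewise no bootstrapping is needed: on $\BF_\beta$ with $\beta\in\{0,1\}$ one writes down explicit smooth sections $C_m\in|C+(m+\beta)F|$ tangent to $D_1$ to order $2m+\beta+1$ at a node.
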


It is relatively easier to prove the existence of infinitely many $\A^1$ curves on $(X,D)$ of type C0, C1 and C3 compared with C2.
For $(X,D)$ of type C2, we can contract $D_2 + D_3 + ... + D_n$  to obtain a {\em log del Pezzo} surface $\overline{X}$, i.e., a projective surface with at worst log terminal singularities and $-K_{\overline{X}}$ ample. Here is where the celebrated theorem of Keel-McKernan comes in: $\overline{X}$ is rationally connected \cite{K-M}. We will use this to show that there are infinitely many $\A^1$ curves in $\overline{X}\backslash \overline{D}$.

As suggested to us by David McKinnon, our construction of $\A^1$ curves on log K3 surfaces over number fields actually produce an infinite sequence of $\A^1$ curves defined over number fields of increasing degrees over $\BQ$.

\begin{thm}\label{LOGK3RATTHMMCKINNON}
For a genuine log K3 surface $(X,D)$ over $\overline{\BQ}$ where $D$ is either a smooth elliptic curve or a rational curve with one node,
there does not exist a number field $k\subset \overline{\BQ}$ such that
every $\A^1$ curve in $X\backslash D$ is defined over $k$. 
\end{thm}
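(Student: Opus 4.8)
The plan is to reduce the statement to the arithmetic of the single point at which each $\A^1$ curve meets $D$, and then to play this off against the Mordell--Weil group of $D$ in type C0 and against the units of the ground field in type C1.

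Suppose toward a contradiction that every $\A^1$ curve in $X\setminus D$ is defined over a number field $k$; enlarging $k$, we may assume $X$, $D$ and (in type C1) the node of $D$ together with its two branches are all defined over $k$. By \thmref{LOGK3RATTHM001} there is at least one $\A^1$ curve, so $D(k)\neq\emptyset$, and in type C0 this makes $D$ an elliptic curve over $k$. The first step is the elementary remark that \emph{the unique contact point of a $k$-rational $\A^1$ curve with $D$ is $k$-rational}: if $C\subset X$ is an $\A^1$ curve of class $\beta$ defined over $k$, then $C$ and $D$ share no component, so $C\cap D$ is a length-$(\beta\cdot D)$ closed subscheme of $X$ defined over $k$ whose support is the single geometric point $p$ that is the image of the one point of the normalization $\BP^1\to C$ lying over $D$; the support of a $k$-subscheme is stable under $\Gal(\overline{\BQ}/k)$, and a Galois-stable single point in characteristic $0$ is $k$-rational. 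So it suffices to exhibit $\A^1$ curves whose contact points generate number fields of unbounded degree over $\BQ$.

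Next one reads off the arithmetic meaning of the contact point. Restricting $\mathcal{O}_X(C)$ to $D$ and using that $C\cap D=(\beta\cdot D)\,p$ as a divisor on $D$ gives $\mathcal{O}_X(C)|_D=\mathcal{O}_D\big((\beta\cdot D)p\big)$. Fixing the contact point $p_0$ of one reference $\A^1$ curve as the origin of $\Pic^0(D)$ --- an elliptic curve in type C0, and $\G_m$ in type C1 via $D^{\mathrm{sm}}\cong\G_m$ --- this becomes
\[
(\beta\cdot D)\,(p-p_0)\;=\;\psi(\beta)\qquad\text{in }\Pic^0(D),
\]
where $\psi\colon\Pic(X)\to\Pic^0(D)$, $\beta\mapsto\mathcal{O}_D\big(\beta-(\beta\cdot D)p_0\big)$, is a homomorphism. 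Thus once $\beta$ is fixed, $p$ is confined to the $(\beta\cdot D)^2$ points of $\Pic^0(D)$ dividing $\psi(\beta)$ at level $\beta\cdot D$. The point of the argument is to use the construction behind \thmref{LOGK3RATTHM001} for types C0 and C1 to produce, for infinitely many $n$, an $\A^1$ curve $C_n$ with $\beta_n\cdot D=N_n\to\infty$ whose contact point $p_n$ attains large level: in type C0, either $p_n-p_0$ is torsion of order divisible by a prime $\ell_n\to\infty$, or --- when $\psi$ has infinite image --- $p_n-p_0$ is a genuine $N_n$-division point of a fixed non-torsion class; in type C1, either $p_n/p_0$ is a root of unity of order $\to\infty$, or an $N_n$-th root of a fixed element of $k^\times$ that is not a root of unity.

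In each alternative the $p_n$ cannot all lie in $D(k)$: in type C0 because $D(k)_{\tors}$ is finite by Mordell--Weil, so torsion points of unbounded order are excluded, and a fixed non-torsion point of the finitely generated group $D(k)$ is $N$-divisible in $D(k)$ for only finitely many $N$ (its $N$-division points have canonical height $\hat h/N^2\to0$, while non-torsion points of $D(k)$ have heights bounded below); in type C1 because $k^\times$ has only finitely many roots of unity, and a non-root-of-unity in $k^\times$ is $N$-divisible for only finitely many $N$ (use a finite place where it has nonzero valuation, or Dirichlet's unit theorem if it is a unit everywhere). Hence $C_n$ is not defined over $k$ for $n\gg0$, contradicting the hypothesis; as $k$ was arbitrary, the theorem follows. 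The step I expect to be the real obstacle is the claim in the previous paragraph about the construction: that the $\A^1$ curves produced in the proof of \thmref{LOGK3RATTHM001} can be arranged to meet $D$ at smooth points whose level tends to infinity, rather than endlessly reusing a bounded set of boundary points of bounded order. This means revisiting the degenerate-then-smooth argument behind \thmref{LOGK3RATTHM001} --- which in type C0 passes through the contraction to a log del Pezzo surface and Keel--McKernan's rational connectedness, and in type C1 is the more hands-on construction alluded to in the introduction --- and checking that the smooth point of $D$ to which the degenerate configuration is attached can be prescribed (or at least forced to have large order) and that the deformation theory producing the smoothing does not obstruct this choice; a minor secondary point is the case split on whether $\psi(\beta_n)$ is torsion, which merely decides which of the two arithmetic finiteness inputs is invoked.
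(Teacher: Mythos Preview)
Your overall strategy---the contact point of a $k$-rational $\A^1$ curve lies in $D(k)$, and then Mordell--Weil (C0) or the structure of $k^\times$ (C1) forbids contact points of unbounded arithmetic level---is exactly the paper's, and your arithmetic endgame is sound. The gap is precisely where you flag it, but you have misidentified the construction you would need to revisit: types C0 and C1 are \emph{not} handled via log del Pezzo contractions or Keel--McKernan; that machinery is reserved for type C2. Both C0 and C1 are treated together in \thmref{LOGK3RATTHM000}: one blows down to a minimal rational surface (passing to a cyclic triple cover in the $\PP^2$ case), lands on a surface carrying a $\PP^1$-fibration, and produces the $\A^1$ curves directly as components of members of $|C+mF|$ meeting $D$ at a single smooth point chosen by \lemref{LOGK3RATLEM000}. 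No degenerate-then-smooth procedure or deformation-theoretic checking is involved, so the worry you raise about ``the smoothing not obstructing this choice'' does not arise.

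Once you look at the right construction the obstacle evaporates. \lemref{LOGK3RATLEM000} supplies, for $A=C+mF$ with $a=A\cdot D$, a contact point $p\in D_{\text{sm}}$ satisfying $ap\in G=i_D^*\Pic(X)$ but $jp\notin G$ for $0<j<\sqrt{a/|G_{\tors}|}$; letting $m\to\infty$ yields contact points $p_m$ whose order in $\Pic(D)/G$ tends to infinity. If all $p_m\in D(k)$ then, after enlarging $k$ so that $G\subset\Pic(D_k)$, the group $\langle p_m\rangle + G$ has image in $\Pic(D)/G$ containing torsion of unbounded order; but this group is finitely generated---by Mordell--Weil in the elliptic case, and in the nodal case because each $p_m\in k^\times$ satisfies $p_m^{a_m}\in G\cap k^\times\subset\CO_{k,S}^\times$ for a suitable finite $S$, hence $p_m\in\CO_{k,S}^\times$ by the $S$-unit theorem. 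Your case split on whether $\psi(\beta_n)$ is torsion and your height/valuation arguments are therefore not needed, though they constitute a valid alternative route to this last step.
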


Note that for a log K3 surface $(X,D)$ over $\overline{\BQ}$,
every $\A^1$ curve in $X_\BC \backslash D_\BC$ 
is automatically defined over $\overline{\BQ}$ due to rigidity
(see Lemma \ref{LOGK3RATLEMRIGIDITY}).

The similar statement for rational curves on K3 surfaces over number fields is expected but not known, to the best of our knowledge.
Although J. Li and C. Liedtke proved that almost all K3 surfaces over number fields have infinitely many rational curves, it is not clear that the rational curves they produced lie over an ascending chain of number fields.

The paper is organized as follows. Theorem \ref{LOGK3RATTHMLOGISOMK3} is proved in \S\ref{LOGK3RATSEC003}. In \S\ref{LOGK3RATSEC002}, we deal with genuine log K3 surfaces $(X,D)$ of type C0 and C1 and prove the existence of infinitely many $\A^1$ curves and Theorem \ref{LOGK3RATTHMMCKINNON} for such $(X,D)$. The rest of our main theorem \ref{LOGK3RATTHM001} is then proved in \S\ref{LOGK3RATSEC004}.
In \S\ref{LOGK3RATSECIITAKA}, we put our results under the framework of Iitaka's classification of log K3 surfaces and give examples of genuine log K3 surfaces that do not have infinitely many $\A^1$ curves.

\subsection{Remarks on Question \ref{LOGK3RATMAINQUESTION}}

Question \ref{LOGK3RATMAINQUESTION} is very difficult in general. For example, we do not know the case when $(X,D)$ is obtained from 
the blowup $\pi: X\to S$ of a K3 surface $S$ at finitely many points $\Sigma$ with $D$ the exceptional divisor of $\pi$; finding $\A^1$ curves in $X\backslash D$ amounts to finding rational curves on $S$ missing all but one point in $\Sigma$, which turns out to be a surprisingly difficult problem. An affirmative answer would generalize the theorem of Li-Lietke \cite{L-L}. On the other hand, there are log K3 surfaces with no log rational curves at all. For example, let $X$ be a Kummer K3 and let $D$ be the disjoint union of 16 $(-2)$-curves. Then $(X,D)$ is a log K3 with no $\A^1$ curves because $X\backslash D$ has an \'etale cover by an abelian surface deleting 16 points. Also there are many examples of genuine log K3 surfaces of type II without infinitely many $\A^1$ curves (see \S\ref{LOGK3RATSECIITAKA}). This suggests that the condition on the vanishing of log irregularity is too weak to ensure the existence of $\A^1$ curves.

\subsection*{Convention and terminology}

We work exclusively over algebraically closed fields of characteristic $0$.
Throughout the paper, ``countable'' means ``countably infinite''.

A log pair $(X,D)$ means a variety $X$ with a reduced Weil divisor $D$. Let $U$ be its interior $X-D$. We say that $(X,D)$ is \emph{log smooth} if $X$ is smooth and $D$ is a normal crossing (nc) divisor. A log pair is projective if the ambient variety is projective.

For a log smooth pair $(X,D)$, we use $\kappa(X,D)$ to denote the logarithmic Kodaira dimension and $q(X,D)$ to denote the logarithmic irregularity, i.e., $q(X,D)=h^0(X,\Omega_X^1(\log D))$. They only depend on the interior of the pair.

An $\A^1$ (or log rational) curve $C^\circ$ in $X\backslash D$ is a quasi-projective curve whose normalization is $\A^1$. Alternatively, the closure $C$ of $C^\circ$
in $X$ is a rational curve satisfying that $\nu^{-1}(D)$ consists of at most one point for the normalization $\nu: C^\nu\to X$ of $C$.

It is easy to see that a genuine log K3 surface $(X,D)$ of type $\mathrm{II}$ must be one of the following:
\begin{enumerate}
\item $D$ is a smooth elliptic curve.
\item $D$ is a rational curve with one node.
\item $D$ is a union of smooth rational curves with simple normal crossings (snc) whose dual graph is a ``circle'', called a ``circular boundary'' by Iitaka. That is, we have $D = D_1 + D_2 + ... + D_n$ such that
\begin{equation}\label{LOGK3RATE103}
\begin{aligned}
D_i(D-D_i) &= 2 \text{ for all } i\\
D_i D_j &= 0 \text{ for } i-j \not\equiv 0,\pm 1\ (\text{mod}\ n). 
\end{aligned}
\end{equation}
We call such $D$ a {\em circular boundary} of type $(\lambda_1, \lambda_2, ...,\lambda_n)$
if $D_i^2 = \lambda_i$.
\end{enumerate}

For a log surface $(X,D)$ with $X$ smooth and $D$ a nc divisor, a
{\em canonical blowup} $f: (\widehat{X}, \widehat{D}) \to (X, D)$ is the blowup of
$X$ at a singular point $p\in D_\text{sing}$ of $D$ with $\widehat{D} = f^{-1}(D)$
and a {\em canonical blowdown} $g: (X, D) \to (\overline{X}, \overline{D})$ is the contraction of a $(-1)$-curve contained in $D$ with $\overline{D} = g_* D$.

\subsection*{Acknowledgment}

The authors would like to thank David McKinnon for suggesting the arithmetic application of our result.

\section{Proof of Theorem \ref{LOGK3RATTHMLOGISOMK3}}\label{LOGK3RATSEC003}

The key construction here is a ``pivot operation'', which is also needed in the proof of our main theorem.

\begin{proof}[Proof Theorem \ref{LOGK3RATTHMLOGISOMK3}]
	We use the notation $\mu(G)$ to denote the number of irreducible components in a curve $G$. We will argue by induction on
	$\mu(D)$.
	
	If $\mu(D) = 1$, $D$ must be a smooth elliptic curve or a nodal rational curve and we have C0 or C1.
	
	Suppose that $D=D_1 + D_2 +...+D_n$ is a circular boundary of type
	$(\lambda_1, \lambda_2, ..., \lambda_n)$ with $D_i^2 = \lambda_i$.
	If $D$ contains a $(-1)$-curve $D_i$, we simply let $\pi: X\to \overline{X}$ be the contraction of $D_i$.
	Obviously, $\pi$ is a log isomorphism and we have reduced $\mu(D)$ by $1$.
	Suppose that $\lambda_i \ne -1$ for all $i$.
	
	Suppose that $\mu(D) = 2$. If $\lambda_1 = 0$ or $\lambda_2 = 0$,
	we have C3. Suppose that $\lambda_i \ne 0$. If $\lambda_1\le -2$ or $\lambda_2 \le -2$,
	we have C2. Otherwise, $\lambda_1, \lambda_2 > 0$ and we have C4.
	
	Suppose that $\mu(D) = n \ge 3$.
	If $\lambda_i\le -2$ for all but one $i$, we are done. Let us assume that there are at least two nonnegative $\lambda_i$'s.
	
	Suppose that one of $\lambda_i$'s vanishes, say $\lambda_1 = 0$.
	We have a log isomorphism $\pi: (X,D) \dashrightarrow
	(\overline{X}, \overline{D})$ composed of a blowup of
	$X$ at $D_1\cap D_2$ followed by a blowdown of the proper transform of $D_1$. On $\overline{X}$, we have
	$\overline{D}_n^2 = \lambda_n + 1$, $\overline{D}_1^2 = 0$
	and $\overline{D}_2^2 = \lambda_2 - 1$ if $n\ge 3$. We call such $\pi$ a {\em pivot} at $D_1$ (see Figure \ref{LOGK3RATFIGB2B3001}).
	When $n\ge 3$, 
	applying a sequence of pivot operations at $D_1$, we arrive at $(X,D)$ 
	with $D$ a circular boundary of type $(0, -1, \lambda_3, ...,\lambda_{n-1}, \lambda_n + \lambda_2 + 1)$; we then contract $D_2$, which will reduce $\mu(D)$ by $1$.
	
	\begin{figure}
		\begin{tikzpicture}[scale=1]
		\draw[thick] (-0.2, 0) -- (1.2, 0);
		\node[below] at (0.5,0) {{\tiny $\lambda_n$}};
		\node[below] at (0.5,-0.4) {{\small $(X,D)$}};
		\draw[thick] (0,-0.2) -- (0,1.2);
		\node[left] at (0,0.5) {{\tiny $0$}};
		\draw[thick] (-0.2,1) -- (1.2,1);
		\node[above] at (0.5,1) {{\tiny $\lambda_2$}};
		
		\draw[->,thick] (1.8,1.8) -- (1.2,1.2);
		
		\draw[thick] (1.8, 2) -- (3.2, 2);
		\node[below] at (2.5,2) {{\tiny $\lambda_n$}};
		\draw[thick] (2,1.8) -- (2,3.2);
		\node[left] at (2,2.5) {{\tiny $-1$}};
		\draw[thick] (2.4,3.8) -- (3.8,3.8);
		\node[above] at (3.2,3.8) {{\tiny $\lambda_2-1$}};
		\draw[thick] (1.88,2.84) -- (2.72,3.96);
		\node[left] at (2.3,3.5) {{\tiny $-1$}};
		
		\draw[->,thick] (4.2,1.8) -- (4.8,1.2);
		
		\draw[thick] (4.8, 0) -- (6.2, 0);
		\node[below] at (5.5,0) {{\tiny $\lambda_n+1$}};
		\node[below] at (5.5,-0.4) {{\small $(\overline{X},\overline{D})$}};
		\draw[thick] (4.88,-0.16) -- (5.72,0.96);
		\node[left] at (5.3,0.5) {{\tiny $0$}};
		\draw[thick] (5.4,0.8) -- (6.8,0.8);
		\node[above] at (6.2,0.8) {{\tiny $\lambda_2-1$}};
		
		\draw[->,dashed,thick] (2,0.5) -- (4,0.5);
		\end{tikzpicture}
		\caption{A pivot $\pi: (X, D) \dashrightarrow (\overline{X}, \overline{D})$ at $D_1$}
		\label{LOGK3RATFIGB2B3001}
	\end{figure}
	
	Finally, we have the remaining case that $\lambda_i \ne 0, -1$ for all $i$, at least two of $\lambda_i$'s are positive and
	$\mu(D) \ge 3$. Suppose that $\lambda_i, \lambda_j > 0$ for some $i\ne j$. By Hodge index theorem and the fact that $D_i$ and $D_j$
	are linearly independent in $H^2(X,\BQ)$, we must have $D_i D_j = 2$
	and $n = 2$. Contradiction.
\end{proof}

\section{Irreducible boundary case}\label{LOGK3RATSEC002}

We are going to prove the following result in this section.

\begin{thm}\label{LOGK3RATTHM000}
For a genuine log K3 surface $(X, D)$ of type II where $D$ is either a smooth elliptic curve or a rational curve with one node,
there are countably many $\A^1$ curves in $X\backslash D$.
\end{thm}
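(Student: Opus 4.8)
The plan is to reduce the claim to the existence of countably many rational curves on a projective K3 surface, for which the results of Bogomolov--Hassett--Tschinkel, Chen--Lewis, and Li--Liedtke are available, and then to promote those rational curves to $\A^1$ curves by imposing a single tangency (or incidence) condition along $D$. We treat the two subcases together. Start from a genuine log K3 surface $(X,D)$ of type II with $D$ irreducible; since $K_X + D = 0$ in $\Pic(X)$, the linear system $|-K_X| = |D|$ is an effective anticanonical class. First I would produce a birational model relating $(X,D)$ to a K3 surface: because $D \in |-K_X|$ is an irreducible curve of arithmetic genus $1$ on a smooth rational surface, blowing up $X$ appropriately (or running a minimal model program on $X$ keeping track of $D$) one lands on a rational elliptic surface, or more usefully one constructs a K3 surface $S$ together with a degree-$2$ or other finite cover / an elliptic fibration whose sections or multisections, when pulled back, meet $D$ in exactly one point. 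Concretely, for the smooth elliptic boundary case, $(X,D)$ carries an elliptic fibration (from a suitable sub-system of $|D|$, or after finitely many blowups) with $D$ as a fiber or bisection; for the nodal case, the partial normalization / deformation argument is analogous.

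The key steps, in order, are: (1) fix an elliptic pencil on $X$ (possibly after a log isomorphism, using that by Theorem \ref{LOGK3RATTHMLOGISOMK3} we may assume $(X,D) = (\widehat X, \widehat D)$ is in normal form C0 or C1) such that $D$ is either a fiber or a section-like curve for the fibration; (2) on each smooth fiber (an elliptic curve $E$) and on the singular fibers, locate rational curves in the ambient rational surface that meet $D$ exactly once — the standard source is fibral $(-2)$-configurations, torsion sections, and, crucially, the countably many curves coming from unramified multisections whose intersection with $D$ can be controlled by a monodromy / Mordell--Weil argument; (3) show the resulting curves, after removing the point(s) over $D$, have normalization $\A^1$, i.e. the closure meets $D$ at a single point, possibly with multiplicity — here one invokes the definition of $\A^1$ curve recalled in the conventions; (4) verify there are countably infinitely many of them and that they are genuinely distinct, by a specialization/deformation count — deform a nodal rational curve in a large linear system $|mD|$ keeping a single assigned intersection point with $D$ and using that a general such deformation is $\A^1$, exactly the mechanism used by Chen--Lewis and Li--Liedtke for K3s. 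The rigidity Lemma \ref{LOGK3RATLEMRIGIDITY} (referenced in the excerpt) guarantees these $\A^1$ curves are isolated, hence the family is at most countable, giving "countably" in the precise sense fixed in the conventions.

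I expect the main obstacle to be step (2)/(4): producing \emph{infinitely many} such curves rather than just finitely many, and ensuring that the single point of $C^\nu$ over $D$ does not split into two as one deforms. On a projective K3 the analogous input is a deep existence theorem (deformation of rational curves in primitive classes, or ample cone arguments); here one must additionally carry the tangency-with-$D$ constraint, which cuts the expected dimension of the deformation space by exactly one, so the curves become rigid and a naive dimension count gives nothing. The resolution I would pursue is the one hinted at by the paper's remark after Theorem \ref{LOGK3RATTHMMCKINNON}: start with one $\A^1$ curve, use a K3-type argument (elliptic fibration + hyperbolic/ergodic monodromy of the Mordell--Weil group, à la Bogomolov--Hassett--Tschinkel) on an associated genuine elliptic K3 to generate an infinite orbit of rational curves each meeting the boundary once, and then descend. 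The arithmetic refinement (Theorem \ref{LOGK3RATTHMMCKINNON}) should come out of the same construction, since each step of the orbit generically requires a field extension, so the degrees over $\BQ$ are unbounded and no fixed number field $k$ can contain all the $\A^1$ curves.
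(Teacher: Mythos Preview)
Your proposal has a genuine gap: there is no K3 surface in this picture, and the reduction you sketch does not go through. The pair $(X,D)$ is of type II, so $X$ is a rational surface; blowing up base points of $|-K_X|$ gives a rational elliptic surface, not a K3, and the paper never produces an ``associated genuine elliptic K3'' to which one could transport the Li--Liedtke or BHT machinery. The vague sentence ``one constructs a K3 surface $S$ together with a degree-$2$ or other finite cover'' is exactly where your argument breaks: no such cover is supplied, and the arithmetic refinement you hope to extract from orbit/monodromy on a K3 has nothing to act on.

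The paper's proof is completely different and much more direct. It contracts $(X,D)$ to a minimal model $(\overline{X},\overline{D})$ with $\overline{X}\in\{\PP^2,\BF_0,\BF_2\}$; in the $\PP^2$ case it passes to the cyclic triple cover ramified along $D$, which is a (possibly $A_2$-singular) cubic surface, not a K3. In every case one now has a ruling $f:X\to\PP^1$ with section $C$ and fiber $F$, and the crucial observation is that every member of $|C+mF|$ is supported on a union of \emph{smooth} rational curves. This kills the obstacle you correctly flagged in step (4): one does not need to prevent $\nu^{-1}(p)$ from splitting on the normalization, because each irreducible component is already $\PP^1$. The infinitude then comes from a Picard-group torsion argument on $D$ (Lemma \ref{LOGK3RATLEM000}, generalizing Xu's trick): for each $m$ one chooses $p\in D_{\mathrm{sm}}$ with $ap = i_D^*(C+mF)$ in $\Pic(D)$ and $mp\notin i_D^*\Pic(X)$ for small $m$, and the surjectivity of $H^0(\CO_X(C+mF))\to H^0(\CO_D(C+mF))$ produces $\Gamma\in|C+mF|$ meeting $D$ only at $p$. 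Any irreducible component of $\Gamma$ is then an $\A^1$ curve, and its intersection number with $D$ tends to infinity with $m$. The number-field statement (Theorem \ref{LOGK3RATTHMMCKINNON}) follows because the torsion orders of the points $p$ are unbounded, which Mordell--Weil (or the structure of $k^*$ in the nodal case) forbids over a fixed number field.
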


Namely, we will prove that there are countably many $\A^1$ curves on a genuine log K3 of type C0 or C1.

Let us first revisit the following theorem of Geng Xu \cite{X}:

\begin{thm}[G. Xu]\label{LOGK3RATTHMGXU}
Given a smooth cubic curve $D$ in $\PP^2$, there are countably many rational curves in $\PP^2$ meeting $D$ set-theoretically at a unique point.
\end{thm}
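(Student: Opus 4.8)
The plan is to realize such rational curves inside the rational elliptic surface obtained by blowing up $\PP^2$ along the base locus of a pencil of cubics through $D$, and to count sections/multisections passing through the right point. First I would fix a point $p \in D$ and a general pencil of cubics $\{D_t\}$ with $D_0 = D$ which is smooth at $p$ and has $9$ distinct base points, one of which is $p$; blowing up these $9$ points gives an elliptic surface $f\colon Y \to \PP^1$ with a section $E$ coming from the exceptional curve over $p$, and the proper transform $\widetilde D$ of $D$ is the fiber $f^{-1}(0)$. A rational curve in $\PP^2$ meeting $D$ only at $p$ corresponds (after passing to proper transforms) to a rational curve $C \subset Y$ disjoint from $\widetilde D - E \cap \widetilde D$, i.e. a rational curve meeting the fiber $f^{-1}(0)$ only along the section $E$; since $C$ is rational and $f^{-1}(0)$ is a smooth elliptic fiber, $C$ cannot dominate $\PP^1$ unless it is ramified over $0$, but the cleanest route is to produce $C$ as a genuine section of $f$.

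The key input is the group structure on the generic fiber: the Mordell–Weil group $\mathrm{MW}(f)$ of a general rational elliptic surface has rank $8$, hence is infinite, and every element of $\mathrm{MW}(f)$ is represented by a section $\sigma\colon \PP^1 \to Y$, which is a smooth rational curve. Taking $E$ as the zero section, I would consider all the sections $n\cdot E'$ obtained from a fixed non-torsion section $E'$, or more robustly all of $\mathrm{MW}(f)$; each such section $\sigma$ meets the fiber $f^{-1}(0)$ in exactly one point. The subtlety is that we need that point to be $\sigma \cap f^{-1}(0) = E \cap f^{-1}(0) = p$, i.e. we want sections passing through $p$. This is arranged by instead choosing the pencil so that $p$ is a base point and taking $E$ itself together with translates: more precisely, one shows that the sections lying in a single coset of the subgroup $\{\sigma : \sigma(0) = p\}$ form an infinite set, because this subgroup has finite index in $\mathrm{MW}(f)$ (it is the kernel of evaluation at $0$ into the finite group of components, combined with the fact that the fiber is smooth so evaluation lands in the fiber itself — here one uses that a non-torsion section composed with a translation still meets the smooth fiber $f^{-1}(0)$, and torsion is finite). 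Pushing these infinitely many sections down to $\PP^2$ gives infinitely many rational curves meeting $D$ set-theoretically only at $p$, and their degrees are unbounded (the classes $n E'$ have strictly increasing intersection with a fixed ample class), so they are genuinely distinct; countability is automatic since $Y$ has only countably many curves of each degree.

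The main obstacle is the last point: ensuring the constructed sections all pass through the \emph{same} point of the elliptic fiber $\widetilde D$, rather than through a moving point. One way around it is to not insist on sections but to allow the rational curve $C \subset Y$ to be any rational curve meeting $\widetilde D$ set-theoretically at one point; then, working with the smooth elliptic fiber $\widetilde D$ and the normalization $\nu\colon \PP^1 \to C$, the composite $\PP^1 \to Y \to \PP^1$ has degree $d = C\cdot \widetilde D$ and by Riemann–Hurwitz is totally ramified over $0$, and one needs infinitely many such $C$ with $C \cap \widetilde D$ a prescribed point $p$. This is exactly where one invokes the elliptic surface structure plus deformation theory: the curves $nE'$ (for $E'$ the zero section shifted so that $E'(0) = p$) always pass through $p$ by construction, and the genus-$0$, unobstructedness, and the identification with $\mathrm{MW}$ make the family infinite. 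I expect the bookkeeping of which pencil to choose (so that $p$ is a base point, the cubic $D$ is a smooth fiber, and the Mordell–Weil rank is maximal) to be the only delicate part; everything else is standard elliptic surface theory.
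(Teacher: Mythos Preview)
Your approach has a genuine gap that you do not identify. When you push a section $\sigma$ of $f\colon Y \to \PP^1$ down to $\PP^2$, the image passes through every base point $q_i$ for which $\sigma \cdot E_i > 0$, and all nine base points lie on $D$; so the pushdown meets $D$ at each such $q_i$ in addition to $\sigma(0)$. For the image to meet $D$ at a single point you therefore need $\sigma$ disjoint from $E_2,\dots,E_9$. Writing $\sigma = aH - \sum b_i E_i$, the conditions $b_2=\cdots=b_9=0$, $\sigma\cdot\widetilde{D}=3a-b_1=1$, and $\sigma^2=a^2-b_1^2=-1$ force $a=0$, $b_1=-1$, i.e.\ $\sigma=E_1$ itself. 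So the Mordell--Weil group produces no new curves of the required type. The obstacle you do flag, arranging $\sigma(0)=p$, is independently fatal: since $\widetilde{D}$ is a \emph{smooth} fiber there is no finite component group to invoke, and evaluation at $0$ sends $\mathrm{MW}(f)$ onto the subgroup of $\widetilde{D}$ generated by the $q_i-p$, which for a general pencil has rank $8$; the kernel is finite, not of finite index.

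The paper's proof is entirely different and does not pass to an elliptic fibration. For each ample $A$ on $\PP^2$ with $a=A\cdot D$, Xu chooses a point $p\in D$ \emph{depending on $A$} with $ap = i_D^* A$ in $\Pic(D)$ but $mp\notin i_D^*\Pic(\PP^2)$ for $0<m<a$. The Severi variety $V_{A,0}$ of integral rational curves in $|A|$ has dimension $a-1$, and the linear subspace $\Lambda\subset|A|$ of curves meeting $D$ at $p$ with multiplicity $a$ has codimension $a-1$, so $\overline{V}_{A,0}\cap\Lambda\ne\emptyset$; the primitivity condition on $p$ then forces any member of this intersection to be integral. Letting $a\to\infty$ yields the countably many curves. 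A feature your proposal lacks is precisely that $p$ is allowed to vary with the degree; fixing $p$ in advance and trying to hit it with sections is what makes your construction rigid.
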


\begin{proof}[Sketch of Xu's Proof]
It is easy to show that there are at most countably many rational curves meeting $D$ at a unique point. Roughly, if there is a complete one-parameter family of such rational curves, some fiber of the family must contain $D$, which is impossible.

Let $V_{A,g}$ be the Severi variety of integral curves in $|A|$ of genus $g$ and
$\overline{V}_{A,g}$ be its closure in $|A| = \PP H^0(A)$. It is well known that
\begin{equation}\label{LOGK3RATE001}
\dim V_{A,0} = A .D - 1 = a - 1.
\end{equation}
The key to produce infinitely many such rational curves is the following observation: For every ample divisor $A$ on $X$,
there exists a point $p\in D$ such that 
\begin{equation}\label{LOGK3RATE000}
ap = i_D^* A \text{ in } \Pic(D) \text{ and } mp \not\in i_D^* \Pic(X)
\text{ for all } 0 < m < a
\end{equation}
where $a = A.D$,
$i_D$ is the embedding $D\hookrightarrow X$ and 
$i_D^*:\Pic(X)\to \Pic(D)$ is the pullback between the Picard groups of $X$ and $D$.

Let $\Lambda\subset \PP H^0(A)$ be the subvariety consisting of $C\in |A|$ such that
$C$ meets $D$ at $p$ with multiplicity $a$. Then $\Lambda$ is a linear subspace of
$\PP H^0(A)$ of codimension $a-1$. So $\overline{V}_{A,0} \cap \Lambda \ne \emptyset$. Let $C\in \overline{V}_{A,0}\cap \Lambda$. Then every component of $C$ is rational and hence $D\not\subset C$. So $C$ meets $D$ properly at $p$ with multiplicity $a$. By our choice of $p$, $C$ must be integral.
\end{proof}

Note that the rational curves meeting $D$ set-theoretically at a single point are not necessarily $\A^1$ curves in $X\backslash D$: for $C\in \overline{V}_{A,0}\cap \Lambda$ in Xu's proof, there is no guarantee that $\nu^{-1}(p)$ consists of a single point on the normalization $C^\nu$ of $C$. Indeed, the computation of the corresponding Gromov-Witten invariants suggests that $\A^1$ curves form a proper subset of $\overline{V}_{A,0}\cap \Lambda$ \cite{T}.

So we need to adapt Xu's argument to $\A^1$ curves. There are two main ingredients of Xu's argument. One is \eqref{LOGK3RATE001}, which guarantees that there are ``sufficiently many'' rational curves on $X$. The other is \eqref{LOGK3RATE000}. His argument can be described by the phrase ``bend-and-not-break'': as he bends the rational curves
in $V_{A,0}$ to meet $D$ at $p$ with multiplicity $a$,
the condition \eqref{LOGK3RATE000} guarantees that the resulting curves do not break. Both \eqref{LOGK3RATE001} and \eqref{LOGK3RATE000} are also crucial to our argument. We have the following weak generalization of \eqref{LOGK3RATE000}.

\begin{lem}\label{LOGK3RATLEM000}
Let $D$ be a smooth elliptic curve or a nodal rational curve of arithmetic genus $p_a(D) = 1$ on a projective variety $X$ with the property that $i_D^* \Pic(X)$ is finitely generated over $\BZ$.
For every $A\in \Pic(X)$ with $a = AD\in \BZ^+$,
there exists a point
$p\in D_\text{sm}$ satisfying
\begin{equation}\label{LOGK3RATE013}
\begin{aligned}
ap &= i_D^* A \text{ in } \Pic(D) \text{ and}\\
mp &\not\in G = i_D^* \Pic(X)
\text{ for all } m \in \BZ^+ \text{ and } m < \sqrt{\frac{a}{|G_\text{tors}|}}
\end{aligned}
\end{equation}
where $G_\text{tors}$ is the torsion part of $G = i_D^* \Pic(X)$ and $D_\text{sm}$ is the smooth locus of $D$.
\end{lem}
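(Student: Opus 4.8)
The plan is to translate the statement into an elementary counting problem inside the generalized Jacobian $J := \Pic^0(D)$. By hypothesis $J$ is a smooth connected commutative group of dimension $1$: an elliptic curve if $D$ is smooth, and $\G_m$ if $D$ is nodal. In particular $J$ is $n$-divisible for every $n \in \BZ^+$, and $|J[n]|$ equals $n^2$ or $n$ in the two cases respectively, so $|J[n]| \ge n$ always and $J[a] \cap J[m] = J[\gcd(a,m)]$. Fix a point $O \in D_{\mathrm{sm}}$; then the Abel--Jacobi map $\alpha \colon D_{\mathrm{sm}} \xrightarrow{\ \sim\ } J$, $p \mapsto \CO_D(p - O)$, is an isomorphism of groups, under which the first condition in \eqref{LOGK3RATE013} becomes $a\,\alpha(p) = \ell_0$ with $\ell_0 := i_D^* A - a\,\CO_D(O) \in J$. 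Since $J$ is $a$-divisible, the solution set $S_A := \{\, p \in D_{\mathrm{sm}} : ap = i_D^* A \text{ in } \Pic(D) \,\}$ is the $\alpha$-preimage of a nonempty coset of $J[a]$, so $S_A \ne \emptyset$ and $|S_A| = |J[a]| \ge a$.

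The heart of the argument is an estimate, for each $m \in \BZ^+$, of how much of $S_A$ lies in the ``bad locus'' $B_m := \{\, p \in D_{\mathrm{sm}} : mp \in G \,\}$, where $G := i_D^* \Pic(X)$. The structural input is that the degree homomorphism exhibits $G / (G \cap J)$ as a subgroup of $\BZ$, hence torsion-free, so that $G_{\tors} = (G \cap J)_{\tors}$: every torsion class of $G$ of degree zero already lies in $G_{\tors}$. Now if $p, p' \in S_A \cap B_m$, then $v := \alpha(p) - \alpha(p') \in J[a]$ (since $a\,\alpha(p) = a\,\alpha(p') = \ell_0$) and $mv = \CO_D(mp) - \CO_D(mp') \in G \cap J$ is killed by $a$, hence $mv \in G_{\tors}$. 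Therefore $S_A \cap B_m$, when nonempty, is a coset of $H_m := \{\, v \in J[a] : mv \in G_{\tors} \,\}$, and applying the homomorphism $v \mapsto mv$ on $J[a]$, whose kernel is $J[a] \cap J[m] = J[\gcd(a,m)]$, yields
\[
|S_A \cap B_m| \;\le\; |H_m| \;\le\; |G_{\tors}| \cdot |J[\gcd(a,m)]| \;\le\; |G_{\tors}| \cdot |J[m]| .
\]

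To finish, let $M$ be the largest positive integer with $M < \sqrt{a / |G_{\tors}|}$ (so that $|G_{\tors}|\, M^2 < a$), taking $M = 0$ if no such integer exists. Since $|J[m]|$ is nondecreasing in $m$, summing the previous display over $1 \le m \le M$ gives $\sum_{m=1}^{M} |S_A \cap B_m| \le |G_{\tors}|\, M\, |J[M]|$. In the nodal case this equals $|G_{\tors}|\, M^2 < a = |S_A|$; in the elliptic case it equals $|G_{\tors}|\, M^3 = M \cdot (|G_{\tors}|\, M^2) < M a < a^2 = |S_A|$, where $M < a$ since if $M \ge 1$ then $a > |G_{\tors}| M^2 \ge M$. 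In either case $\sum_{m=1}^M |S_A \cap B_m| < |S_A|$, so we may choose $p \in S_A$ avoiding every $B_m$ for $1 \le m \le M$; this $p$ satisfies \eqref{LOGK3RATE013}.

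The only genuinely delicate step is the estimate on $|S_A \cap B_m|$, and the whole force of the bound $m < \sqrt{a / |G_{\tors}|}$ rests there: one must see that the ``extra'' points of $S_A$ failing the non-divisibility condition for a given $m$ are governed solely by the torsion of $G$ and the elementary torsion of $J$, never by the rank of $G$. Once the degree-filtration argument isolates this, everything else is bookkeeping with finite subgroups of $J$ together with the one-line summation above, so I do not anticipate any further obstacle.
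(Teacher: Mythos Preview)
Your proof is correct, but it takes a different route from the paper's. The paper's argument is a two-point trick: choose $p_1,p_2\in S_A$ with $p_1-p_2$ of order exactly $a$ in $\Pic(D)$ (possible since $J[a]$ always contains such an element); if both $p_i$ fail the conclusion, there are $k_i<\sqrt{a/|G_{\tors}|}$ with $k_ip_i\in G$, whence $k_1k_2(p_1-p_2)\in G$ is a torsion element of order $\ge a/(k_1k_2)>|G_{\tors}|$, a contradiction. So the paper never counts at all---it just plays two witnesses off against each other.

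Your approach instead bounds, for each $m$, the entire bad locus $S_A\cap B_m$ by $|G_{\tors}|\cdot|J[m]|$ via the degree-filtration observation $G_{\tors}=(G\cap J)_{\tors}$, and then sums. This is longer but more robust: it actually shows that \emph{most} points of $S_A$ satisfy the conclusion, not merely one, and the method would adapt if one wanted to avoid finitely many further conditions. The paper's argument, by contrast, is a one-line pigeonhole that exactly matches the square-root threshold (the product $k_1k_2$ is where $\sqrt{\phantom{a}}$ enters) and gives the cleanest possible proof of the stated bound, but yields no quantitative surplus.
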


\begin{proof}
Since torsions of all orders exist in $\Pic(D)$, we can find two points 
$p_1$ and $p_2$ on $D_\text{sm}$ such that $a p_1 = a p_2 = i_D^* A$ and $p_1-p_2$ is torsion
in $\Pic(D)$ of order exactly $a$. Suppose that \eqref{LOGK3RATE013} fails for both
$p_i$. Then there exist positive integers $k_1$ and $k_2$ such that
$k_1k_2 l < a$ and $k_i p_i\in G$ for $i=1,2$,
where $l = |G_\text{tors}|$. Then $k_1k_2(p_1 - p_2)\in G$ and $k_1k_2(p_1 - p_2)$
is torsion of order $\ge a/(k_1k_2) > l$. Contradiction.
\end{proof}

Now we are ready to prove Theorem \ref{LOGK3RATTHM000}.

\begin{proof}[Proof of Theorem \ref{LOGK3RATTHM000}]
It is well known that there are at most countably many $\A^1$ curves in $X\backslash D$ if $K_X + D$ is pseudo-effective (also see Lemma \ref{LOGK3RATLEMRIGIDITY} below).

Obviously, there exists a birational morphism
$g: X \to \overline{X}$ with $\overline{D} = g_* D$ such that
$\overline{X}$ is a minimal rational surface and $\overline{D}\in |-K_{\overline{X}}|$ is a smooth elliptic curve or a rational curve with one node. Indeed, $\overline{X}$ must be one of $\PP^2, \BF_0$ or $\BF_2$, where $\BF_\beta$ is the Hirzebruch surface $\PP(\CO_{\PP^1} \oplus \CO_{\PP^1}(\beta))$ over $\PP^1$. Let us replace $(X,D)$ by $(\overline{X}, \overline{D})$.

When $X\cong \PP^2$, we let $\pi: \widehat{X} \to X$ be the cyclic triple cover of $X$ ramified over $D$ and let
$\widehat{D} = \pi^{-1}(D)$. Clearly, if there are infinitely many $\A^1$ curves in $\widehat{X}\backslash \widehat{D}$, the same holds for $X\backslash D$.
Note that $\widehat{X}$ is a smooth cubic surface when $D$
is smooth and a cubic surface with an $A_2$ singularity when
$D$ is a nodal cubic. We replace $(X, D)$ by $(\widehat{X}, \widehat{D})$.

So in all these cases, we have a fiberation $f: X\to \PP^1$
whose general fibers are $\PP^1$. Let $C$ be a section and $F$ be a fiber of $f$. We choose $C$ such that $C\cap X_\text{sing} = \emptyset$.
Then every $\Gamma\in |C + mF|$ is supported on a union of smooth rational curves.

Note that the map
\begin{equation}\label{LOGK3RATE100}
\xymatrix{
	H^0(\CO_X(C+mF)) \ar@{->>}[r] & H^0(\CO_D(C + mF))
}
\end{equation}
is a surjection since $h^1(-D + C+mF) = h^1(-C-mF) = 0$.
Therefore, for every $p\in D_\text{sm}$ satisfying
$ap = C + mF$ in $\Pic(D)$, there exists $\Gamma\in |C+mF|$
such that $\Gamma$ meets $D$ at the unique point $p$.

By Lemma \ref{LOGK3RATLEM000}, there exists $p$ satisfying
\eqref{LOGK3RATE013}. So there exists a curve $\Gamma\in |C + mF|$
such that $\Gamma$ meets $D$ only at a point $p$ satisfying \eqref{LOGK3RATE013}. Let $\Gamma'$ be an irreducible component of $\Gamma$. Then $\Gamma'$ is a smooth rational curve meeting $D$ at the unique point $p$.
Note that $\Gamma'$ is Cartier since it is disjoint from
$X_\text{sing}$ when $X$ is a cubic surface with an $A_2$ singularity at the node of $D$. So
\begin{equation}\label{LOGK3RATE101}
\Gamma' D \ge \sqrt{\frac{a}{|G_\text{tors}|}}
\to\infty \text{ as } m\to\infty
\end{equation}
by \eqref{LOGK3RATE013}. Consequently, there are infinitely many $\A^1$ curves in $X\backslash D$.
\end{proof}

We are also ready to prove Theorem \ref{LOGK3RATTHMMCKINNON}. First, we need to justify the claim that every $\A^1$ curve in $X_\BC\backslash D_\BC$ is defined over $\overline{\BQ}$ for a log K3 surface $(X,D)$ over $\overline{\BQ}$.

\begin{lem}\label{LOGK3RATLEMRIGIDITY}
Let $D$ be an effective divisor of normal crossings on a smooth projective variety $X$. If $K_X + D$ is pseudo-effective,
there do not exist a quasi-projective variety $B$, a dominant morphism $f: Y = \PP^1\times B \to X$ and a section 
$\Gamma\subset Y$ of $Y/B$ such that
$f^{-1}(D) \subset \Gamma$. In addition, if $\dim X = 2$ and $(X, D)$ is defined over $\overline{\BQ}$, then every rational curve $C\subset X_\BC$ satisfying
$|\nu^{-1}(D)| \le 1$ is defined over $\overline{\BQ}$
with $\nu: C^\nu\to X$ the normalization of $C$.
\end{lem}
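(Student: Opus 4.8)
The plan is to prove the two assertions separately. For the first, I would argue by contradiction: suppose such data $(B, f, \Gamma)$ exist. After shrinking $B$ we may assume $B$ is smooth and quasi-projective; compactify to get a proper family, resolve to get a smooth projective surface (when $\dim X=2$, or a smooth projective variety in general) $\overline{Y}$ fibered over a smooth projective base $\overline{B}$ with generic fiber $\PP^1$, together with a dominant rational map $\overline{f}\colon\overline{Y}\dashrightarrow X$ and (the closure of) the section $\Gamma$. The key point is that $f^{-1}(D)\subset\Gamma$ forces every fiber $\PP^1_b$ to meet $D$ only along $\Gamma$, i.e. in at most one point; so the pullback $f^* D$ is supported on $\Gamma$ plus vertical divisors. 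Since $K_X+D$ is pseudo-effective and $\overline{f}$ is dominant, $\overline{f}^*(K_X+D)$ is pseudo-effective on $\overline{Y}$ (after resolving the indeterminacy, pseudo-effectivity is preserved under pushforward/pullback in the appropriate direction). Restricting to a general fiber $\PP^1_b$, which avoids the indeterminacy locus, we get that $(\overline{f}|_{\PP^1_b})^*(K_X+D)$ has nonnegative degree on $\PP^1$. But $K_X$ restricted to the image of a general fiber (a rational curve moving in a dominant family) pairs negatively or the fiber is contracted, and $D$ meets the fiber in at most one point, so $\deg\bigl((\overline f|_{\PP^1_b})^*(K_X+D)\bigr) \le (\text{small}) - 2 < 0$ unless the fiber is contracted — and if all fibers are contracted then $f$ is not dominant. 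This contradiction proves the first part. I would phrase the fiber-degree bound carefully: on $\PP^1$, $\deg\CO(2) = \deg(-K_{\PP^1})$, the fiber maps to $X$ with image a rational curve $R$, and $f^*D\cdot \PP^1_b \le 1$ because $f^{-1}(D)\subset\Gamma$ meets each fiber once; meanwhile $f^*K_X\cdot\PP^1_b = K_X\cdot R$ counted with the appropriate multiplicity, and $-2 = K_{\PP^1_b}\cdot \PP^1_b$, so adjunction on $\overline Y$ restricted to the fiber gives the needed inequality.

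For the second assertion, suppose $(X,D)$ is defined over $\overline{\BQ}$ with $\dim X = 2$ and let $C\subset X_\BC$ be a rational curve with $|\nu^{-1}(D)|\le 1$. The rational curves on $X$ with bounded arithmetic genus and bounded degree against a fixed ample divisor form a scheme of finite type over $\overline{\BQ}$ (a Chow variety or Hilbert scheme component), so $C$ is a $\BC$-point of a finite-type $\overline{\BQ}$-scheme; to conclude $C$ is defined over $\overline{\BQ}$ it suffices to show $C$ is rigid, i.e. an isolated point of the subvariety parametrizing rational curves meeting $D$ in at most one point. If $C$ were not rigid, it would deform in a positive-dimensional family of such curves; normalizing and passing to a suitable cover of the base, one produces exactly the forbidden data in the first part — a family $Y = \PP^1\times B$ with a dominant map to $X$ (dominance because a nonconstant deformation of a curve on a surface sweeps out an open set) and the preimage of $D$ landing in a section $\Gamma$ (here is where $|\nu^{-1}(D)|\le 1$ is used: the normalized total space maps each fiber to $C$ touching $D$ at one point, and these one-points sweep out a multisection which, after base change, becomes a section). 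This contradicts the first part, so $C$ is rigid, hence defined over $\overline{\BQ}$.

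The main obstacle I expect is the careful handling of the resolution of indeterminacy and the passage from a merely rational dominant map $\overline{f}$ to honest degree inequalities on a general fiber: one must check that a general fiber of $\overline Y\to\overline B$ avoids the indeterminacy locus of $\overline f$, that pseudo-effectivity of $K_X+D$ pulls back correctly, and that the section $\Gamma$ survives compactification as an honest section (rather than becoming multivalued or acquiring vertical components). A secondary technical point is the reduction in the second part: turning a positive-dimensional deformation into the product family $\PP^1\times B$ requires normalizing the universal curve over the deformation base and possibly a further finite base change to trivialize the $\PP^1$-bundle and to split the multisection coming from $\nu^{-1}(D)$ into a genuine section; these are standard but need to be spelled out. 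Everything else — the adjunction computation on the fiber, the finiteness of the Chow/Hilbert scheme, and the contradiction — is routine.
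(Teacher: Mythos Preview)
Your overall architecture matches the paper's: contradiction for part one via a degree computation on a general fiber, and for part two a spreading-out/rigidity argument reducing to part one. Part two is essentially correct and is exactly what the paper does.

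There is, however, a genuine gap in your fiber computation for part one. You assert that $f^*D\cdot\PP^1_b\le 1$ because $f^{-1}(D)\subset\Gamma$ meets each fiber once. This is false: $f^{-1}(D)\subset\Gamma$ is a set-theoretic condition, and the scheme-theoretic pullback $f^*D$ can have arbitrarily high multiplicity along $\Gamma$, so $f^*D\cdot Y_b$ can be any positive integer. Your subsequent appeal to ``adjunction on $\overline Y$ restricted to the fiber'' does not repair this, and the Riemann--Hurwitz-style remark about $f^*K_X\cdot\PP^1_b$ does not make sense as stated, since $f|_{Y_b}\colon\PP^1\to X$ is not a map of curves when $\dim X\ge 2$.

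The paper handles this with one clean stroke you are missing: the condition $f^{-1}(D)\subset\Gamma$ yields an inclusion of sheaves
\[
f^*\Omega_X(\log D)\ \hookrightarrow\ \Omega_Y(\log\Gamma),
\]
since locally $f^*(dx_i/x_i)$ is a log form with pole only along $\Gamma$. Taking top exterior powers (after arranging $f$ generically finite, so $\dim Y=\dim X$) gives that $(K_Y+\Gamma)-f^*(K_X+D)$ is effective. Then on a general fiber $Y_b\cong\PP^1$ one has $(K_Y+\Gamma)\cdot Y_b=-2+1=-1$, while $f^*(K_X+D)\cdot Y_b=(K_X+D)\cdot f_*Y_b\ge 0$ because $K_X+D$ is pseudo-effective and $f_*Y_b$ is a movable curve class. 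This is the contradiction. Note also that this avoids your compactification and indeterminacy worries entirely: the log-differential inclusion is local, and the numerical contradiction lives on a single general fiber.
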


\begin{proof}
Suppose that such $f$ exists.
We may assume that $Y$ is smooth and $f$ is generically finite. 
Then $f^*(\Omega_X(\log D))\subset \Omega_Y(\log \Gamma)$.
It follows that $(K_Y + \Gamma) - f^*(K_X + D)$ is effective and hence $K_Y + \Gamma$ is pseudo-effective. But $(K_Y + \Gamma) . Y_b < 0$ for $b\in B$ general.
Contradiction.

Suppose that $\dim X = 2$, $(X,D)$ is defined over $\overline{\BQ}$
and $C\subset X_\BC$ is a rational curve satisfying $|\nu^{-1}(D)| \le 1$ and transcendental over $\overline{\BQ}$. Then by taking a spread, we can find a variety $B$ over $\overline{\BQ}$ of positive dimension and a non-trivial family $\C\subset X\times B$ of rational curves on $X$ such that $|\nu_b^{-1}(D)| \le 1$
for all $b\in B$, where $\nu: \widehat{\C}\to X\times B$ is the normalization of
$\C$. Contradiction.
\end{proof}

\begin{proof}[Proof of Theorem \ref{LOGK3RATTHMMCKINNON}]
In the proof of Theorem \ref{LOGK3RATTHM000}, we have actually found a sequence $\{\Gamma_n\}$ of rational curves on $X$ such that
each $\Gamma_n$ meets $D$ at a unique point $p_n\in D_\text{sm}$ with the properties
\begin{equation}\label{LOGK3RATE116}
\begin{aligned}
a_n p_n &\in G = i_D^* \Pic(X) \text{ for some } a_n\in \BZ^+\\
mp_n &\not\in G \text{ for all } 0 < m < a_n\\
\lim_{n\to\infty} a_n &= \infty.
\end{aligned}
\end{equation}
Let $M$ be the subgroup of $\Pic(D)$ generated by $p_n$. Then \eqref{LOGK3RATE116} implies that $M$ contains torsions of arbitrarily high orders.

Suppose that all $\Gamma_n$ are defined over a number field $k$.
WLOG, let us assume that $D$ is defined over $k$ as well.
Then $p_n$ are also defined over $k$. 
If $D$ is a smooth elliptic curve,
by Mordell-Weil (cf. \cite{S}), $M$ is finitely generated and cannot contain torsions of arbitrarily high orders.
If $D$ is a nodal rational curve, $M\subset k^*$ again cannot contain torsions of arbitrarily high orders in $(\BC^*)_\text{tors}$. Contradiction.
\end{proof}

\section{Proof of Theorem \ref{LOGK3RATTHM001}}\label{LOGK3RATSEC004}

\subsection{A necessary condition for the existence of infinitely many $\A^1$ curves}

\begin{lem}\label{LOGK3RATLEMB2}
Let $X$ be a smooth projective surface with $H^1(X) = 0$ and $D$ be a nc divisor on $X$. If there is an infinite sequence $\{C_m\subset X\}$ of integral curves of increasing degrees satisfying that
$|\nu_m^{-1}(D)| \le 1$ for the normalization $\nu_m: C_m^\nu \to X$
of $C_m$ and all $m$, then
\begin{equation}\label{LOGK3RATEB2}
\begin{aligned}
&\text{for every log isomorphism } f: \xymatrix{(X, D)\ar@{-->}[r]^-{\sim} & (\widehat{X}, \widehat{D})}
\text{ with}\\
&\quad \widehat{X} \text{ smooth and } \widehat{D} \text{ of nc},
\text{ either } \mu(\widehat{D}) = 1 \text{ or } 
\\
&\quad
\text{there exist a numerically effective (nef) and big divisor }
\widehat{L} \text{ on }\widehat{X}\\
&\quad
\text{and irreducible components }
\widehat{D}_1\ne \widehat{D}_2 \text{ of }
\widehat{D} \text{ such that}\\
&\quad\quad\widehat{D}_1\cap \widehat{D}_2\ne\emptyset
\text{ and }
\widehat{L}(\widehat{D} - \widehat{D}_1 - \widehat{D}_2) = 0.
\end{aligned}
\end{equation}
\end{lem}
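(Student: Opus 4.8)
The plan is to reduce, via a log isomorphism, to a situation where the curves $C_m$ can be followed and to extract a nef and big divisor from their limiting behavior. Since $\mu(\widehat D)$ is a log-isomorphism invariant only when $\mu = 1$, assume $\mu(\widehat D) \ge 2$, and work on $\widehat X$ after replacing $(X,D)$ by $(\widehat X, \widehat D)$; the sequence $\{C_m\}$ transports to a sequence $\{\widehat C_m\}$ of integral rational curves on $\widehat X$ with $|\nu_m^{-1}(\widehat D)| \le 1$ and $\deg \widehat C_m \to \infty$ with respect to any fixed polarization (the boundedness of $\mu$ and the birational nature of $f$ keep degrees going to infinity after discarding finitely many members and those contracted by $f^{-1}$). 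First I would fix an ample $H$ on $\widehat X$ and pass to a subsequence so that the classes $[\widehat C_m]/(H\cdot \widehat C_m)$ converge in $N_1(\widehat X)_\R$ to a nonzero nef class $z$; this uses that the $\widehat C_m$ are irreducible curves of unbounded degree so their normalized classes lie in a compact slice of $\overline{\mathrm{NE}}(\widehat X)$, and $H^1(X)=0$ keeps things finite-dimensional with no Albanese to worry about. Let $\widehat L$ be a nef integral class with $\widehat L \cdot (-)$ comparable to $z\cdot(-)$; concretely one takes $\widehat L$ rational nef with the same vanishing locus on $\overline{\mathrm{NE}}$ as $z$, then clears denominators.

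The key point is the intersection constraint $\widehat C_m \cdot \widehat D_i$. Because $|\nu_m^{-1}(\widehat D)| \le 1$, the curve $\widehat C_m$ meets the whole of $\widehat D$ in at most one point; in particular it meets the connected curve $\widehat D$ in a single point $q_m$, so it can meet at most the components of $\widehat D$ through $q_m$. Since $\widehat D$ is a circular boundary (its dual graph is a cycle, from the classification of genuine log K3's of type II with $\mu(\widehat D)\ge 2$, passing through nodal chains if $\widehat D$ were a single nodal rational curve — but that is the $\mu=1$ case), any point of $\widehat D$ lies on at most two components. Hence there are (not necessarily the same for each $m$) components $\widehat D_{i(m)}, \widehat D_{j(m)}$ of $\widehat D$ with $\widehat C_m \cdot \widehat D_k = 0$ for $k \ne i(m), j(m)$. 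By pigeonhole on the finitely many choices of the pair $\{i(m), j(m)\}$, pass to a further subsequence with a fixed pair $\{\widehat D_1, \widehat D_2\}$ — and these satisfy $\widehat D_1 \cap \widehat D_2 \ne \emptyset$ since $q_m$ lies on both (or, if $\widehat C_m$ meets only one component, throw that case into a subsequence where $\widehat D_2$ is a neighbor of $\widehat D_1$ in the cycle; here I should be slightly careful, and I expect this bookkeeping to be the fussiest part of the write-up). Dividing $\widehat C_m \cdot \widehat D_k = 0$ for $k \ne 1,2$ by $H\cdot \widehat C_m$ and taking $m\to\infty$ gives $z \cdot \widehat D_k = 0$, hence $\widehat L \cdot (\widehat D - \widehat D_1 - \widehat D_2) = 0$.

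It remains to see that $\widehat L$ is \emph{big}, not merely nef; this is where I expect the main obstacle. Bigness should follow because $(\widehat X, \widehat D)$ is a genuine log K3 of type II, so $\widehat X$ is a rational surface, and $z$ is a nonzero nef limit of irreducible-curve classes: if $z^2 = 0$ then $z$ is proportional to the class of a fiber of a morphism $\widehat X \to \mathbb P^1$ (or generates an extremal ray of the cone with square zero), and one argues that the $\widehat C_m$ would then be forced into finitely many fibers or would have bounded degree — contradicting $\deg \widehat C_m \to \infty$ together with irreducibility — or else their images fill up the base, producing a covering family of curves $C_m$ with $|\nu^{-1}(\widehat D)|\le 1$, which is exactly the configuration ruled out by the first part of \lemref{LOGK3RATLEMRIGIDITY} (applied with $B$ the base curve, since $K_{\widehat X} + \widehat D = 0$ is certainly pseudo-effective). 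So $z^2 > 0$, and a nef class with positive self-intersection on a surface is big; choosing $\widehat L$ in the same ray up to clearing denominators keeps $\widehat L$ nef and big. I would organize the final write-up so that the "covering family" dichotomy is handled once, cleanly, via \lemref{LOGK3RATLEMRIGIDITY}, since that lemma is precisely tailored to kill the bad case.
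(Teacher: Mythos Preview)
Your reduction to $(\widehat X,\widehat D)$ and the pigeonhole step isolating two adjacent components $\widehat D_1,\widehat D_2$ are fine and match the paper. The genuine gap is in the construction of the nef and big $\widehat L$.

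First, the limit $z$ of normalized classes is only a \emph{real} class, and the sentence ``take $\widehat L$ rational nef with the same vanishing locus on $\overline{\NE}$ as $z$, then clear denominators'' is not justified: the nef cone need not be locally rational polyhedral near a boundary point, so there is no reason a rational nef class in the rational subspace $(\widehat D-\widehat D_1-\widehat D_2)^\perp$ should exist near $z$. Second, your bigness argument does not go through. A real nef class of square zero on a surface is \emph{not} in general proportional to the class of a fiber of any morphism to $\PP^1$, so the dichotomy ``bounded degree in fibers or covering family'' is unavailable; and even if it were, \lemref{LOGK3RATLEMRIGIDITY} requires an algebraic family $\PP^1\times B\to X$, which a countable sequence of curves does not supply. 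You are also importing hypotheses absent from the lemma ($K_{\widehat X}+\widehat D$ pseudo-effective, $\widehat D$ circular). Note that you never really use $H^1(X)=0$; your remark ``no Albanese'' is not where it enters.

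The paper avoids all of this by building $\widehat L$ out of integral classes from the start. Since $h^2(\widehat X)<\infty$, the classes $[\widehat C_1],\ldots,[\widehat C_m]$ become linearly dependent in $H^2(\widehat X,\BQ)$ once $m$ is large; writing $\sum a_i[\widehat C_i]=0$ with $a_i\in\BZ$ not all zero and splitting
\[
A=\sum_{a_i>0}a_i\widehat C_i\ \sim_{\text{num}}\ -\sum_{a_i<0}a_i\widehat C_i=B,
\]
one sees $A$ is nef (for any irreducible curve $C$, one of $A,B$ does not contain $C$, hence $A\cdot C=B\cdot C\ge 0$). If $A^2>0$ we are done. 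If $A^2=0$ but $A\cdot\widehat C_n>0$ for some $n$, then $NA+\widehat C_n$ is nef and big for $N\gg 0$. If $A^2=0$ and $A\cdot\widehat C_n=0$ for all $n$, then $H^1(\widehat X)=0$ upgrades $A\sim_{\text{num}}B$ to $A\sim_{\text{lin}}B$ (after clearing denominators), so $A,B$ span a base-point-free pencil giving $\widehat X\to\PP^1$, and every $\widehat C_n$ lies in a fiber---contradicting $\deg\widehat C_n\to\infty$. This is exactly where the hypothesis $H^1(X)=0$ does its work.
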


\begin{proof}
Let $\widehat{C}_m$ be the proper transform of $C_m$ under $f$. Then $\widehat{C}_m$ meets $\widehat{D}$ at no more than one point.
Suppose that $\mu(\widehat{D}) > 1$. 
Since $\widehat{D}$ is a nc divisor, no three components of $\widehat{D}$ meet at one point.
Therefore, there exist components $\widehat{D}_1 \ne \widehat{D}_2$ 
of $\widehat{D}$ such that $\widehat{D}_1 \cap \widehat{D}_2 \ne \emptyset$ and $\widehat{C}_m\cap \widehat{D}_i = \emptyset$ for components $\widehat{D}_i\ne \widehat{D}_1, \widehat{D}_2$
and infinitely many $m$. Hence
\begin{equation}\label{LOGK3RATE104}
\widehat{C}_m (\widehat{D} - \widehat{D}_1 - \widehat{D}_2) = 0
\end{equation}
for infinitely many $m$.
We may simply assume that \eqref{LOGK3RATE104} holds for all $m$.

Next, we claim that there exists a nef and big divisor
$\widehat{L} = \sum a_i \widehat{C}_i$ for some $a_i\in \BZ$. This, combining with \eqref{LOGK3RATE104}, will imply
$\widehat{L} (\widehat{D} - \widehat{D}_1 - \widehat{D}_2) = 0$.

Obviously, $\widehat{C}_1, \widehat{C}_2, ..., \widehat{C}_m$ are linearly dependent
in $H^2(X,\BQ)$ as long as $m > h^2(X)$. So there exist integers $a_1, a_2, ..., a_m$, not all zero, such that
\begin{equation}\label{LOGK3RATE504}
a_1 \widehat{C}_1 + a_2\widehat{C}_2 + ... + a_m \widehat{C}_m = 0
\end{equation} 
in $H^2(X,\BQ)$. We write
\begin{equation}\label{LOGK3RATE505}
A = \sum_{a_i > 0} \widehat{C}_i \sim_\text{num} -\sum_{a_i < 0} a_i\widehat{C}_i = B.
\end{equation}
Since $\widehat{C}_i$ are effective, $A\ne 0$ and $B\ne 0$.

Clearly, $AC = BC \ge 0$ for all irreducible curves $C$ and thus $A$ and $B$ are nef. If $A^2 > 0$, $A$ is big and nef and we are done.
Suppose that $A^2 = 0$. If $A \widehat{C}_n > 0$ for some $n\in \BZ^+$, then $NA + \widehat{C}_n$ is big and nef for some $N>>1$. Suppose that $A\widehat{C}_n = 0$ for all $n$.

Since $H^1(X) = 0$, $A = B$ in $\Pic_\BQ(X)$. WLOG, suppose that
$A = B$ in $\Pic(X)$. Then $A$ and $B$ span a base-point-free pencil in $|A|$ which induces a map $f: X\to \PP^1$. Since $A\widehat{C}_n = 0$, each $\widehat{C}_n$ is containing in a fiber of $f$. This is impossible since $\deg \widehat{C}_n\to \infty$ as $n\to\infty$.
\end{proof}

Now we can prove the ``only if'' part of Theorem \ref{LOGK3RATTHM001}. That is, if there are infinitely many $\A^1$ curves in $X\backslash D$, $(X,D)$ cannot be of type C4.

\begin{proof}[Proof of ``only if'' part of Theorem \ref{LOGK3RATTHM001}]
If $(X,D)$ is C4, then $D = D_1 + D_2$ with $D_i^2 = \lambda_i > 0$.
Let $f: \widehat{X} \to X$ be the blowup of $X$ at the two intersections $D_1\cap D_2$; then $\widehat{D} = f^{-1}(D)$ has four components, each having self-intersection $\ge -1$, which obviously violates \eqref{LOGK3RATEB2} (see Figure \ref{LOGK3RATFIGB2B3000}).

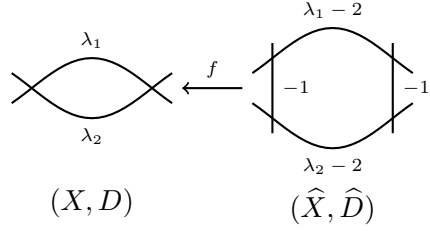
\begin{figure}
\begin{tikzpicture}[scale=0.8]
\draw[thick, domain=-30:210] plot (\x/90, {sin(\x)/2});
\draw[thick, domain=-30:210] plot (\x/90, {-sin(\x)/2});
\draw[thick, domain=-30:210] plot (\x/90 + 4, {1/2+sin(\x)/2});
\draw[thick, domain=-30:210] plot (\x/90 + 4, {-1/2 - sin(\x)/2});
\draw[thick] (4, 0.75) -- (4, -0.75);
\draw[thick] (6, 0.75) -- (6, -0.75);
\draw[->, thick] (3.5,0) -- (2.5, 0);
\node[below] at (1,-0.5) {{\tiny $\lambda_2$}};
\node[above] at (1,0.5) {{\tiny $\lambda_1$}};
\node[below] at (5,-1) {{\tiny $\lambda_2-2$}};
\node[above] at (5,1) {{\tiny $\lambda_1-2$}};
\node[above] at (3,0) {{\tiny $f$}};
\node[right] at (4,0) {{\tiny $-1$}};
\node[right] at (6,0) {{\tiny $-1$}};
\node[below] at (1,-1.5) {{$(X,D)$}};
\node[below] at (5,-1.5) {{$(\widehat{X},\widehat{D})$}};
\end{tikzpicture}
\caption{$D = D_1 + D_2$, $D_1^2 = \lambda_1 > 0$ and $D_2^2 = \lambda_2 > 0$}
\label{LOGK3RATFIGB2B3000}
\end{figure}
\end{proof}

\subsection{Infinitely many $\A^1$ curves on $(X,D)$ of type C3}

It remains to prove the ``if'' part of Theorem \ref{LOGK3RATTHM001}.
That is, there are infinitely many $\A^1$ curves on $(X,D)$ of type
C0-C3. We have prove the existence for C0 and C1 in Theorem \ref{LOGK3RATTHM000}. Type C3 is more or less trivial by the following lemma.

\begin{lem}\label{LOGK3RATLEM002}
Let $(X,D)$ be a genuine log K3 surface with circular boundary $D = D_1 + D_2$ satisfying that $D_2^2 = 0$.
Then there are infinitely many $\A^1$ curves in $X\backslash D$.
\end{lem}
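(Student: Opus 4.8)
The plan is to exploit the fibration that $D_2$ forces and to realize the required $\A^1$ curves as sections of it passing through a node of $D$. Since $D=D_1+D_2$ is a circular boundary we have $D_1D_2=2$, and since $(X,D)$ is a genuine log K3 of Iitaka type $\mathrm{II}$ the surface $X$ is rational and $K_X=-D_1-D_2$. As $D_2$ is a smooth rational curve with $D_2^2=0$, Riemann--Roch on $X$ together with $h^2(\CO_X(D_2))=h^0(\CO_X(-D_1-2D_2))=0$ gives $h^0(\CO_X(D_2))\ge 2$, and $D_2^2=0$ forces $|D_2|$ to be base point free. Hence $|D_2|$ defines a fibration $f\colon X\to\PP^1$ with general fiber $\PP^1$, having $D_2$ as a fiber $F$ and $D_1$ as a bisection. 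Fix a section $C$ of $f$ (one exists, e.g.\ by Tsen's theorem, or by passing to a relatively minimal model). Any such $C$ is a smooth rational curve with $C\cdot F=1$, so adjunction together with $K_X=-D_1-D_2$ and $C\cdot D_2=C\cdot F=1$ yields the identity $C^2=C\cdot D_1-1$.

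The crucial point is that a smooth rational curve $H$ with $H\cdot F=1$ meeting $D$ only in a single node $q\in D_1\cap D_2$ is automatically an $\A^1$ curve, and I would produce infinitely many such $H$. Fix $q\in D_1\cap D_2$ and, for all sufficiently large $m$, set $k:=(C+mF)\cdot D_1=C\cdot D_1+2m$. Using the identity $C^2=C\cdot D_1-1$ and $h^2(\CO_X(C+mF))=0$ (immediate, since $(K_X-C-mF)\cdot F<0$), Riemann--Roch gives $\dim|C+mF|\ge\chi(\CO_X(C+mF))-1=k$. The linear subsystem $\Lambda\subset|C+mF|$ of divisors meeting $D_1$ at $q$ with multiplicity $\ge k$ is cut out by at most $k$ linear conditions, so $\Lambda\ne\emptyset$; pick $\Gamma\in\Lambda$.

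To extract the $\A^1$ curve from $\Gamma$: since $C+mF-D_1$ has negative intersection with the nef class $F$, no member of $|C+mF|$ contains $D_1$, so $\Gamma\cdot D_1$ is an effective $0$-cycle of degree $k$ supported on $\Gamma\cap D_1$, and the defining condition forces $\Gamma\cap D_1=\{q\}$. As $D_1\cap D_2$ consists of two distinct points, $D_2\not\subset\Gamma$; and since $\Gamma\cdot D_2=\Gamma\cdot F=1$ with $q\in D_2$, this gives $\Gamma\cap D_2=\{q\}$, hence $\Gamma\cap D=\{q\}$. The part of $\Gamma$ not contained in fibers of $f$ is a single reduced component $H$ with $H\cdot F=1$; since $H\cdot F_t=1$ for every fiber $F_t$, $H$ is smooth and avoids the nodes of all fibers, so $H\cong\PP^1$. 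The other components of $\Gamma$ lie in fibers, and any one of them meeting $D_1$ would have to be the fiber through $q$, namely $D_2$, which is excluded; therefore $H\cdot D_1=\Gamma\cdot D_1=k$, all concentrated at $q$, so $H\cap D=\{q\}$. Thus $H$ is an $\A^1$ curve with $H\cdot D=k+1$, and letting $m\to\infty$ produces infinitely many such curves of strictly increasing degree.

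The step I expect to be the main obstacle is the ``bend-and-not-break'' bookkeeping of the third paragraph: ensuring the special member $\Gamma$ is not absorbed into $D$ --- that it contains neither $D_1$ nor $D_2$ --- and that the high-order tangency with $D_1$ at the node $q$ genuinely produces a single component whose normalization meets $D$ at one point. What makes it all fit together is the numerical coincidence $C^2=C\cdot D_1-1$, valid for every section $C$, which forces $\dim|C+mF|$ to equal exactly the number of tangency conditions one needs to impose.
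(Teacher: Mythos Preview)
Your proof is correct and follows essentially the same idea as the paper: use the fibration induced by $|D_2|$ and produce sections tangent to the bisection $D_1$ to maximal order at a fixed node $q\in D_1\cap D_2$. The only difference is in execution: the paper first blows down to a Hirzebruch surface $\BF_\beta$ (where every member of $|C+(m+\beta)F|$ is automatically an irreducible smooth section, so no extraction step is needed), whereas you work on $X$ itself and therefore must peel off the section component $H$ from a possibly reducible $\Gamma$ --- which you do correctly using $\Gamma\cap D_1=\{q\}$ and the irreducibility of the fiber $D_2$ through $q$. Your version has the mild advantage that it avoids checking that the $\A^1$ curves on $\BF_\beta$ lift back to $X$.
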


\begin{proof}
Let $\pi: X\to \PP^1$ be the fiberation given by $|D_2|$. Obviously, $\pi$ factors through a ruled surface $\overline{X}$. Clearly, $\overline{X}$ is either $\BF_0$
or $\BF_1$. Let us replace $(X,D)$ by $(\overline{X}, \overline{D})$ with $\overline{D} = \pi_* D$.

So $X \cong \BF_\beta$, $D_1\in |2C + (\beta + 1) F|$ and $D_2\in |F|$,
where $\beta = 0$ or $1$, $F$ is a fiber of $\pi$ and $C$ is a section of $\pi$ with
$C^2 = -\beta$. Suppose that $D_1$ and $D_2$ meets at two points $p$ and $p'$.
For each $m\in \BN$, we have a smooth curve $C_m\in |C + (m+\beta) F|$ meeting $D_1$ at $p$ with multiplicity $2m + \beta + 1$. Clearly, $C_m\backslash \{p\}$ is an $\A^1$ curve in $X\backslash D$.
\end{proof}

\subsection{Infinitely many $\A^1$ curves on $(X,D)$ of type C2}

As a technical issue, we need $D_1^2 > 0$ later on. 
We can assume that by the following lemma.

\begin{lem}\label{LOGK3RATLEMD12}
Let $(X,D)$ be a genuine log K3 surface with circular boundary $D = D_1 + D_2 + ... + D_n$ satisfying $D_1^2 \ne 0,-1$ and $D_i^2 \le -2$
for $i\ne 1$. Then there exists a birational morphism
$f: X\to \overline{X}$ with $\overline{D} = f_* D$ such that
$(\overline{X}, \overline{D})$ is a genuine log K3 surface of either type C1 or type C2 with $\overline{D}_1^2 > 0$ and $\overline{D}_i^2 \le -2$ for $i\ne 1$.
\end{lem}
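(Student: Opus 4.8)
The plan is to build $f$ from a succession of contractions of $(-1)$-curves. If $D_1^2>0$ there is nothing to do: take $f=\mathrm{id}$, and $(\overline X,\overline D)=(X,D)$ is a genuine log K3 of type C2 with $\overline D_1^2>0$ and $\overline D_i^2\le-2$ for $i\ne1$ by hypothesis. So I may assume $D_1^2\ne0,-1$ forces $D_1^2\le-2$, in which case \emph{every} component of $D$ has self-intersection $\le-2$. Since $X$ is rational and $D\in|-K_X|$ is a cycle of $n$ rational curves, $K_X^2=D^2=\sum_iD_i^2+2n\le-2n+2n=0$; as every relatively minimal rational surface has $K^2\ge8$, the surface $X$ is not relatively minimal, hence contains a $(-1)$-curve $E$.

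The key observation is that any such $E$ is very simply placed relative to $D$: by adjunction $-K_X\cdot E=1$, and $-K_X=D$, so $E\cdot D=1$; since $E^2=-1\ne D_i^2$ we have $E\not\subset D$, and $E\cdot D=1$ forces $E$ to meet $D$ transversally at a single point, lying on exactly one component $D_j$ and outside $D_{\mathrm{sing}}$. Contracting $E$ therefore again produces a smooth rational surface $X'$, with $-K_{X'}$ the image of $D$, still a cycle of the same length, whose $j$-th component has self-intersection one larger, the others unchanged, and with $K_{X'}^2=K_X^2+1$. Whenever a boundary component thereby becomes a $(-1)$-curve — precisely when the component just raised was a $(-2)$-curve — I would follow with a canonical blowdown, shortening the cycle, and would repeat canonical blowdowns until the boundary contains no $(-1)$-curve. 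Since every blowdown strictly increases $K^2$ and $K^2\le9$ on a rational surface, the procedure terminates after finitely many steps.

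At termination the boundary $\overline D$ is either irreducible, a nodal rational curve, or a cycle each of whose components is either $\le-2$ or positive; I would argue that it must in fact be a nodal rational curve (type C1) or a cycle with exactly one positive component and all others $\le-2$ (type C2 with $\overline D_1^2>0$ and $\overline D_i^2\le-2$ for $i\ne1$). That $(\overline X,\overline D)$ is then a genuine log K3 reduces, given $K_{\overline X}+\overline D=0$ and $\overline X$ rational, to $q(\overline X,\overline D)=h^0(\Omega^1_{\overline X}(\log\overline D))=0$, which holds because $q$ is a subquotient of $H^1(\overline X\setminus\overline D,\BC)$, whose dimension counts the linear relations among the classes of the boundary components in $H^2(\overline X,\BQ)$: there are none, since $[\overline D]=-K_{\overline X}\ne0$ when $\overline D$ is irreducible, and the intersection determinant $\overline D_1^2\,\overline D_2^2-4<0$ when $\overline D=\overline D_1+\overline D_2$ with $\overline D_1^2>0$ and $\overline D_2^2\le-2$.

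The step I expect to be genuinely delicate is the claim in the previous paragraph that the procedure can be run so as to terminate in one of those two configurations, rather than stalling at a circular boundary carrying a component of self-intersection $0$ (type C3) or two positive components (type C4) with no $(-1)$-curve left. Here the Hodge index theorem helps — as in the proof of Theorem~\ref{LOGK3RATTHMLOGISOMK3}, a genuine log K3 with circular boundary of length $\ge3$ can have at most one positive component — but for short cycles one seems forced to choose, at each contraction of an exterior $(-1)$-curve, a curve compatible with the ensuing cascade of canonical blowdowns; the monotone bounded invariant $K^2$ then takes care of termination.
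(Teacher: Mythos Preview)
Your approach matches the paper's: contract an exterior $(-1)$-curve $E$ (which, as you note, meets $D$ transversally at a single smooth point since $E\cdot D=-K_X\cdot E=1$), cascade canonical blowdowns, and induct on $\rank_\BZ\Pic(X)$. One point you should make explicit is that after a single such cascade, at most \emph{one} component can have self-intersection $>-2$: the canonical blowdowns proceed along a contiguous arc of the cycle, and the component just past one end of that arc absorbs every increase, while the component past the other end gains only $+1$ and stays $\le-2$. In particular your C4 worry is misplaced---two components of positive self-intersection never arise from this procedure.

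The genuine gap is precisely the one you flag: the absorbing component may end with self-intersection exactly $0$, and your suggestion to ``choose a compatible $(-1)$-curve'' does not exclude this. The paper handles it directly. If $\mu(\overline D)\ge3$ with type $(0,\lambda_2,\dots,\lambda_m)$ and $\lambda_i\le-2$ for $i\ge2$, a sequence of pivot operations at $\overline D_1$ transfers self-intersection from $\overline D_m$ to $\overline D_2$ one unit at a time until $\overline D_2^2=-1$, and then a canonical blowdown drops $\mu$. (Pivots are only log isomorphisms, so the composite $f$ is not literally a morphism; this is harmless for the intended application, which only needs $X\setminus D\cong\overline X\setminus\overline D$.) If $\mu(\overline D)=2$ with type $(0,\lambda_2)$, either some $(-1)$-curve $E$ has $E\cdot\overline D_1=1$, and contracting it gives $\overline D_1^2=1$ and hence C2; or every $(-1)$-curve misses $\overline D_1$, in which case $|\overline D_1|$ is a base-point-free pencil whose $(-1)$-curves all lie in fibers and hit the $2$-section $\overline D_2$, so contracting them raises $\overline D_2^2$ through $-1$, where a canonical blowdown yields C1. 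Finally, your verification of $q(\overline X,\overline D)=0$ only covers $\mu\le2$; for the general C2 output one uses that the chain $\overline D_2+\cdots+\overline D_n$ has negative-definite intersection matrix, so the components are linearly independent in $H^2(\overline X,\BQ)$ (cf.\ Lemma~\ref{LOGK3RATLEM001}).
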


\begin{proof}
There is nothing to do if $D_1^2 > 0$. Let us assume that $D_i^2 \le -2$ for all $i$. We prove by induction on $\rank_\BZ \Pic(X)$.

Clearly, $X$ is not minimal. Let $f: X\to \overline{X}$ be the contraction of a $(-1)$-curve followed by a sequence of canonical blowdowns such that $\overline{D}$ has no component with self intersection $-1$. Clearly, all but one component of
$\overline{D}$ satisfy $\overline{D}_i^2 \le -2$. 
WLOG, suppose that $\overline{D}_i^2 \le -2$ for $i\ne 1$.
If $\mu(\overline{D}) = 1$ or $\overline{D}_i^2 > 0$, we are done.
If $\overline{D}_1^2 \le -2$, it follows from induction hypothesis since $f$ has reduced $\rank_\BZ \Pic(X)$ by $1$.

Suppose that $\overline{D}_1^2 = 0$. If $\mu(\overline{D}) \ge 3$, assuming $\overline{D}$ of type $(0, \lambda_2, ..., \lambda_m)$, then
a sequence of pivot operations at $\overline{D}_1$
\begin{equation}\label{LOGK3RATE506}
(0, \lambda_2, ..., \lambda_m)
\to (0, \lambda_2+1, ..., \lambda_m-1)
\to ... \to (0, -1, ..., \lambda_m + \lambda_2 + 1)
\end{equation}
followed by a sequence of canonical blowdowns will give us what we want.

Suppose that $\overline{D}_1^2 = 0$ and $\mu(\overline{D}) = 2$. If
$\overline{D}_1 E = 1$ for some $(-1)$-curve $E$, then blow down $E$ and we are done. Otherwise, $\overline{D}_1 E = 0$ for all $(-1)$-curves $E$ on $\overline{X}$. Then there exists a birational morphism $g: \overline{X} \to \widehat{X}$ with
$\widehat{D}_i = g_* \overline{D}_i$ such that $\widehat{D}_2^2 = -1$.
Blowing down $\widehat{D}_2$, we obtain a genuine log K3 of type C1.
\end{proof}

It remains to prove the following:

\begin{prop}\label{LOGK3RATPROPCB}
Let $(X, D)$ be a genuine log K3 surface with circular boundary $D = D_1 + D_2 + ... + D_n$ of type $(\lambda_1, \lambda_2, ..., \lambda_n)$. If $\lambda_1 > 0$ and
$\lambda_i \le -2$ for $i\ne 1$, then there are infinitely many $\A^1$ curves in
$X\backslash D$.
\end{prop}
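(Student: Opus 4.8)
The plan is to reduce to a \emph{log del Pezzo} surface and invoke Keel--McKernan. Since $\lambda_i\le -2$ for all $i\ne 1$ and the dual graph of $D$ is a circle, the divisor $D_2+D_3+\dots+D_n$ is a chain of curves each of self-intersection $\le -2$, so it is contractible: there is a birational morphism $g\colon X\to\ol X$ contracting exactly $D_2+\dots+D_n$ to a point, leaving $\ol D := g_*D_1 = g(D_1)$, a rational curve with $\ol X$ having at worst log terminal (indeed quotient) singularities along the contracted locus. Because $K_X+D=0$ and $D_1$ meets the contracted chain, one computes $-K_{\ol X}=\ol D$ is $\BQ$-Cartier, and since $\lambda_1=D_1^2>0$ one checks $-K_{\ol X}$ is ample; thus $\ol X$ is a log del Pezzo surface. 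Here $\ol D$ is an ample curve, $g$ is a log isomorphism onto $X\bs D\cong \ol X\bs\ol D$, so it suffices to produce infinitely many $\A^1$ curves in $\ol X\bs\ol D$.

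Next I would apply the theorem of Keel--McKernan \cite{K-M}: a log del Pezzo surface is rationally connected, and in fact the smooth locus $\ol X_{\mathrm{sm}}$ is rationally connected by chains of $\A^1$-curves. The idea is then to mimic Xu's ``bend-and-not-break'' argument (Theorem \ref{LOGK3RATTHMGXU}) in this setting, using Lemma \ref{LOGK3RATLEM000} as the replacement for \eqref{LOGK3RATE000}. Concretely: $\ol D$ is a smooth elliptic curve or a nodal rational curve of arithmetic genus $1$ sitting in the smooth locus of $\ol X$ (since $D_1$ was disjoint from $D_{\mathrm{sing}}$-type issues can be arranged, or one passes to a small resolution near any singular point of $\ol D$ — this needs a little care), with $i_{\ol D}^*\Pic(\ol X)$ finitely generated. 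For a sufficiently ample (and Cartier) divisor $A$ on $\ol X$ we get from Lemma \ref{LOGK3RATLEM000} a point $p\in\ol D_{\mathrm{sm}}$ with $ap=i_{\ol D}^*A$ and $mp\notin G$ for $m<\sqrt{a/|G_{\tors}|}$. The surjectivity of restriction $H^0(\CO_{\ol X}(A))\twoheadrightarrow H^0(\CO_{\ol D}(A))$ (from $H^1(\CO_{\ol X}(A-\ol D))=H^1(\CO_{\ol X}(A+K_{\ol X}))=0$, Kawamata--Viehweg on the log del Pezzo) produces a member meeting $\ol D$ only at $p$; intersecting the closure $\ol V_{A,0}$ of the genus-$0$ Severi variety with the linear subspace $\Lambda$ of curves tangent to order $a$ at $p$ — nonempty since $\dim\ol V_{A,0}\ge a-1=\codim\Lambda$, where the dimension bound comes from $\ol X$ being (log) del Pezzo and the deformation theory of rational curves through a fixed point — yields an \emph{integral} rational curve $\Gamma'$ meeting $\ol D$ set-theoretically only at $p$, with $\Gamma'\cdot\ol D\ge\sqrt{a/|G_{\tors}|}$. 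Since $\Gamma'$ is Cartier away from $\ol X_{\mathrm{sing}}$ and meets $\ol D\subset\ol X_{\mathrm{sm}}$ transversally-or-not only over $p$, the condition \eqref{LOGK3RATE013} forces $\nu^{-1}(p)$ to be a single point, so $\Gamma'\bs\{p\}$ is an $\A^1$ curve; letting $a\to\infty$ gives countably many of them.

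The main obstacle, I expect, is \textbf{two-fold}. First, extracting the dimension estimate $\dim\ol V_{A,0}\ge a-1$ on a \emph{singular} surface: on $\ol X$ one must work with rational curves avoiding or controlled near the log terminal singular points, and either argue directly on $\ol X$ using that $-K_{\ol X}$ is ample (so expected dimension of rational curves in $|A|$ through one point is $-K_{\ol X}\cdot A - 1 \ge A\cdot\ol D-1$ since $-K_{\ol X}=\ol D$), or pull everything back to $X$ and track how the intersection with the boundary chain $D_2+\dots+D_n$ behaves — a member of $\ol V_{A,0}$ could pass through the singular point and its proper transform could then hit $D_2+\dots+D_n$, breaking the ``meets $D$ at one point'' property. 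One resolves this by using the Keel--McKernan rational connectedness of $\ol X_{\mathrm{sm}}$ to guarantee enough genuinely free rational curves in the smooth locus, so that generic members of the relevant Severi variety avoid $\ol X_{\mathrm{sing}}$. Second, the case $\ol D$ nodal requires checking that after the reduction $\ol D$ is still a genus-$1$ curve in the smooth locus and that the torsion-in-$\Pic(\ol D)$ input to Lemma \ref{LOGK3RATLEM000} still applies, which it does since $\Pic^0(\ol D)\cong\G_m$ still has torsion of every order. Everything else — Kawamata--Viehweg vanishing, the linear-algebra of $\Lambda$, and the final ``one point'' argument — is routine once these two points are settled.
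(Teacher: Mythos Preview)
There is a genuine gap. Your final step asserts that ``the condition \eqref{LOGK3RATE013} forces $\nu^{-1}(p)$ to be a single point, so $\Gamma'\bs\{p\}$ is an $\A^1$ curve.'' This is false, and it is precisely the difficulty the paper flags immediately after Theorem \ref{LOGK3RATTHMGXU}: a rational curve meeting $\ol D$ set-theoretically at one point need not be an $\A^1$ curve, because the normalization may have several preimages over $p$. Condition \eqref{LOGK3RATE013} only prevents the curve from \emph{breaking} into components of small degree; it places no constraint whatsoever on the number of local branches of an \emph{irreducible} curve at $p$. In the irreducible-boundary case (Theorem \ref{LOGK3RATTHM000}) this is circumvented by working in linear systems $|C+mF|$ on very special surfaces where every member is a union of \emph{smooth} rational curves, so each component is automatically unibranch. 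No such mechanism is available on a general log del Pezzo $\ol X$.

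A secondary but related issue: after contracting the chain $D_2+\dots+D_n$, the curve $\ol D=g_*D_1$ does \emph{not} sit in the smooth locus of $\ol X$; its node is exactly the image of the contracted chain, which is the unique singular point of $\ol X$. So Lemma \ref{LOGK3RATLEM000} cannot be applied at a point of $\ol D_{\mathrm{sm}}$ to capture full tangency, and limits in $\ol V_{A,0}\cap\Lambda$ may pass through $\ol X_{\mathrm{sing}}$, whence their proper transforms on $X$ hit $D_2+\dots+D_n$ and are not $\A^1$ either.

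The paper's proof resolves both problems by a different mechanism. It works \emph{on $X$} at the fixed node $p\in D_1\cap D_2$ and chooses divisors $A_m$ with $A_m D_2=1$ (Proposition \ref{LOGK3RATPROPRATLOGDELPEZ}); this numerical condition forces any curve in $V_{A_m,0,D,a_mp}$ to be unibranch at $p$, since a single branch already accounts for the entire intersection with $D_2$. The hard part is then reaching $V_{A_m,0,D,a_mp}$: starting from $\dim V_{A_m,0,D,2p}=a_m-2$, the paper increases the tangency at $p$ one at a time (Proposition \ref{LOGK3RATPROPTANGENTD1D2}) by letting an auxiliary marked point $q$ on $D_1$ collide with $p$. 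The delicate point---limits acquiring components of $D$---is controlled by the key Lemma \ref{LEMK3RATKEYLEMMA}, which analyzes the stable-map limit and shows the collision happens in a fiber where $f(Y_b)$ meets $D$ properly. Your outline contains neither the $A_mD_2=1$ device that guarantees unibranchness nor any substitute for Lemma \ref{LEMK3RATKEYLEMMA}, so the argument does not close.
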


We follow a similar line argument as Xu's proof of 
Theorem \ref{LOGK3RATTHMGXU}.
To start with, we need ``many'' rational curves in $X$ disjoint from $D_2 + D_3 + ... + D_n$, or equivalently, many rational curves in the smooth locus $\overline{X}_\text{sm}$ of $\overline{X}$, where $X\to \overline{X}$ is the contraction of $D_2 + D_3 + ... + D_n$.
This is where the theorem of Keel and McKernan comes in: the smooth locus of a log del Pezzo surface is rationally connected \cite[Corollary 1.6]{K-M}. 
We put their theorem in the following form:

\begin{thm}[Keel-McKernan]\label{LOGK3RATTHMLOGDELPEZZORATCON}
Let $X$ be a log del Pezzo surface. Then there exists an ample Cartier divisor $A$ on $X$ such that
\begin{itemize}
\item
$V_{A, 0}$ is nonempty of expected dimension $-K_X A - 1\ge 2$,
\item
a general member $C\in V_{A,0}$ lies inside $X_\text{sm}$,
\item
the normalization $\nu: C^\nu\to X$ of $C$ is an immersion,
\item
$\nu^* T_X$ is ample,
\item
and $C$ meets a fixed reduced curve $D$ transversely.
\end{itemize}
Furthermore, the same holds for $V_{mA,0}$ and all $m\in \BZ^+$
and a general member of $V_{mA,0}$ is nodal if $-mK_X A - 1\ge 4$.
\end{thm}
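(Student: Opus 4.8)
The plan is to derive the stated form from the Keel-McKernan theorem that the smooth locus of a log del Pezzo surface is rationally connected \cite[Corollary 1.6]{K-M}, together with the standard deformation theory of rational curves in characteristic $0$. The only extra geometric input needed is the observation that a very free rational curve on a \emph{surface} is automatically an ample Cartier divisor, so one may take $A$ to be the class of such a curve.

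\textbf{A single good curve.} By \cite[Corollary 1.6]{K-M} the smooth locus $X_{\text{sm}}$ is rationally connected, so over our ground field there is an immersed very free rational curve $f_0 : \PP^1 \to X_{\text{sm}}$, i.e.\ with $f_0^* T_X$ ample. Put $C_0 = f_0(\PP^1)$. Since $f_0^* T_X$ is ample on $\PP^1$ its $H^1$ vanishes, so the space of deformations of $f_0$ is smooth of dimension $h^0(f_0^* T_X) = -K_X\cdot C_0 + 2$; dividing by $\Aut(\PP^1)$, the curves $C_0$ move in a family sweeping out $X_{\text{sm}}$ of dimension $-K_X\cdot C_0 - 1$, and very freeness forces $-K_X\cdot C_0 \ge 3$ (the summand of $f_0^* T_X$ receiving $T_{\PP^1}$ has degree $\ge 2$), so this family has dimension $\ge 2$.

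\textbf{$A := C_0$ is ample and Cartier.} Because $C_0$ moves in a family covering the dense open set $X_{\text{sm}}$ and $X_{\text{sing}}$ is finite, every integral curve $E\subset X$ meets a general member of the family, hence $C_0\cdot E > 0$; and the normal sheaf of $f_0$ has degree $-K_X\cdot C_0 - 2 \ge 1$, so $C_0^2 \ge 1$ (it exceeds $\deg N_{f_0}$ by $2\,p_a(C_0)\ge 0$). By the Nakai-Moishezon criterion $C_0$ is ample, and it is Cartier since it is disjoint from $X_{\text{sing}}$. Set $A = C_0$. The five bulleted properties for $V_{A,0}$ now follow: it is nonempty (it contains $C_0$) of the expected dimension $-K_X\cdot A - 1 \ge 2$ by the unobstructedness above; a general member is still an integral immersed rational curve with $\nu^* T_X$ ample and still lies in $X_{\text{sm}}$ (meeting the finite set $X_{\text{sing}}$ is a proper closed condition on the family); and a general member meets the fixed reduced curve $D$ transversely by generic smoothness of the evaluation map from the universal curve, which is smooth since $C_0$ is free.

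\textbf{Multiples and nodality.} For $m\in\BZ^+$, glue a chain of $m$ general members of the family at chosen points and smooth it; the standard smoothing theorem for chains of very free curves produces an immersed very free rational curve in $X_{\text{sm}}$ of class $mA$, so $V_{mA,0}$ inherits all the same properties with expected dimension $m(-K_X\cdot A) - 1$. Finally, a general member of $V_{mA,0}$ is nodal: an immersed rational curve that is general in its deformation class has only nodes as singularities once the deformation space is large enough to separate branches and avoid the codimension-$\ge 2$ loci of curves carrying a tacnode or a triple point, and $-mK_X\cdot A - 1 \ge 4$ is the numerical threshold that this genericity argument needs. I expect this last point — together with the bookkeeping that each of the five properties survives passage to a general member and to the classes $mA$ — to be the only real work; the conceptual core is the short ampleness observation in the previous paragraph.
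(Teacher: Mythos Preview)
Your proposal is correct and follows essentially the same route as the paper's sketch: invoke Keel--McKernan to produce rational curves in $X_{\text{sm}}$, apply standard deformation theory of curves on surfaces (the paper cites \cite[Chapter~3, Section~B]{H-M}) for the expected-dimension, immersion, $\nu^* T_X$ ample, transversality, and nodality claims, and then glue and smooth general members to pass from $V_{A,0}$ to $V_{mA,0}$. Your explicit Nakai--Moishezon check that $A = C_0$ is ample and Cartier is a detail the paper leaves implicit, and the paper additionally points to \cite[Lemma~6.4]{K-M} for deforming a curve off the singular locus, but otherwise the arguments coincide.
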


The key observation here is that once we can deform a rational curve away from $X_\text{sing}$,
the standard deformation theory of curves on smooth surfaces will take over. 
As long as $C\cap X_\text{sing} = \emptyset$ for a general member $C\in V_{A,0}$,
we can prove that $V_{A,0}$ has the expected dimension. In addition, as long as
$\dim V_{A,0} \ge 2$, a general member 
$C\in V_{A,0}$ behaves as expected (cf. \cite[Chapter 3, Section B]{H-M}), i.e.,
$\nu: C^\nu\to X$ is an immersion, $\nu^* T_X$ is ample
and $C$ is nodal if
$\dim V_{A,0}\ge 4$.
In our case, to deform a rational curve away from the only singularity of $X'$ or $\overline{X}$,
we actually only need a lemma in Keel-McKernan's paper \cite[Lemma 6.4]{K-M}.
Moreover, once we have $\dim V_{A,0} = -K_XA - 1$, we can produce more rational curves by taking two general members $C_1, C_2\in V_{A,0}$ and deforming the union $C_1\cup C_2$ to a rational curve in $V_{2A,0}$.
More generally, if $\dim V_{A_1,0} = -K_X A_1 - 1\ge 0$, $\dim V_{A_2,0} = -K_X A_2 -1\ge 0$ and two general members $C_1\in V_{A_1,0}$ and
$C_2\in V_{A_2,0}$ meet transversely, then $C_1\cup C_2$ can be deformed to a rational curve in $V_{A_1+A_2,0}$. So we can prove in this way that the theorem holds for
all $V_{mA,0}$. 

Basically, we want to impose tangency conditions on $C\in V_{A,0}$. 
Let us first define the subvarieties of Severi varieties of curves on $X$ tangent to a fixed curve $D$ as follows.

\begin{defn}\label{LOGK3RATDEFSEVERITANGENT}
For a curve $D$ on a projective surface $X$ and a zero cycle
$\alpha = m_1 p_1 + m_2 p_2 + ... + m_kp_k\in Z_0(D)$, we use the notation
$V_{A, g, D, \alpha}$ to denote the subvariety of $V_{A,g}$ consisting of
integral curves $C\in |A|$ of genus $g$ satisfying that
\begin{itemize}
\item
$C$ meets $D$ properly and
\item
there exists
$q_i\in \nu^{-1}(p_i)$ and $n_i\ge m_i$ such that
$q_1, q_2,...,q_k$ are distinct and
$\nu^* D = n_i q_i$ when $\nu$ is restricted to the open neighborhoods
of $p_i$ and $q_i$ for $i=1,2,...,k$,
\end{itemize}
where $\nu: \widehat{C}\to X$ is the normalization of $C$,
$m_1,m_2,...,m_k\in \BQ^+$ and $p_1,p_2,...,p_k$ are points on $D$ such that
$D$ is locally $\BQ$-Cartier at each $p_i$.
\end{defn}

We are going to prove Proposition \ref{LOGK3RATPROPCB} 
by showing that there are infinitely many rational curves $C\subset X$ meeting $D$ only at $p \in D_1\cap D_2$; more precisely,
we are going to show
\begin{equation}\label{LOGK3RATE138}
V_{A_m,0,D,a_m p} \ne \emptyset
\end{equation}
for a sequence of divisors $A_m$ satisfying $a_m = A_m D\to\infty$ as $m\to \infty$. For starters, we prove the following:

\begin{prop}\label{LOGK3RATPROPRATLOGDELPEZ}
Let $(X, D)$ be a genuine log K3 surface with circular boundary
$D = D_1 + D_2 + ... + D_n$ of type $(\lambda_1, \lambda_2, ..., \lambda_n)$. If
$\lambda_1 > 0$ and $\lambda_i \le -2$ for $i\ne 1$, then
$(X,D)$ can replaced by a log isomorphic model such that
$D_1^2 > 0$,
the intersection matrix of $D-D_1$ is negative definite and there exist a sequence of divisors $A_m$ on $X$ satisfying that
\begin{equation}\label{LOGK3RATE132}
\begin{aligned}
A_m(D - D_1) &= A_m D_2 = 1,\\
\lim_{m\to\infty} A_m D &= \infty,\\
\dim V_{A_m, 0, D, 2p} &= A_m D - 2 \text{ for } p \in D_1\cap  D_2,\\
&\hspace{-96pt} \text{and a general member } C_m\in V_{A_m, 0, D, 2p}
\text{ meets } D\\
&\text{transversely at } A_m D - 2 \text{ points outside of } p.
\end{aligned}
\end{equation}
\end{prop}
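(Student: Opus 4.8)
The plan is to first set up the right birational model and then feed Keel--McKernan's theorem into the standard deformation theory of curves on surfaces. Since $\lambda_1 > 0$ and $\lambda_i \le -2$ for $i \ne 1$, by Lemma \ref{LOGK3RATLEMD12} we may already assume $D_1^2 > 0$; the intersection matrix of $D - D_1 = D_2 + \dots + D_n$ is then negative definite because the $D_i$ ($i \ne 1$) are a chain of curves with negative self-intersection $\le -2$ and, by the Hodge index theorem together with $D_1^2 > 0$, they span a negative definite sublattice of $H^2(X, \BQ)$. Contracting $D_2 + \dots + D_n$ gives a normal projective surface $\overline{X}$ with one log terminal singular point $\bar q$ (the image of the chain), and $-K_{\overline{X}} = \overline{D}_1$ (the image of $D_1$) is ample since $\overline{D}_1^2 > 0$ and $\overline{D}_1 \cdot C > 0$ for every curve $C$ not contained in the chain; so $\overline{X}$ is a log del Pezzo surface. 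Let $g\colon X \to \overline{X}$ be the contraction and $\overline{D} = \overline{D}_1 = g_* D$.

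Next I would apply Theorem \ref{LOGK3RATTHMLOGDELPEZZORATCON} to $\overline{X}$ with the reduced curve being $\overline{D}_1$, obtaining an ample Cartier divisor $\overline{A}$ on $\overline{X}$ such that $V_{m\overline{A}, 0}$ is nonempty of expected dimension $-mK_{\overline{X}}\cdot \overline{A} - 1$, a general member $C$ lies in $\overline{X}_{\mathrm{sm}}$ with $\nu^* T_{\overline{X}}$ ample and meets $\overline{D}_1$ transversely, and (for $m$ large) $C$ is nodal. Pulling back under $g$: since a general such $C$ avoids $\bar q$, its proper transform is isomorphic to $C$, lies in $X \setminus (D_2 + \dots + D_n)$, and meets $D_1$ transversely at $\overline{A}\cdot\overline{D}_1$ points; after possibly replacing $\overline{A}$ by a small twist I can arrange $A_m \cdot D_2 = 1$, hence $A_m(D - D_1) = A_m D_2 = 1$ and $A_m D = A_m D_1 + 1 \to \infty$. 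The degree statement $\dim V_{A_m, 0} = A_m D_1 + \dots$, i.e. the expected dimension, transfers directly.

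The heart of the argument is the tangency condition, i.e. passing from $V_{A_m, 0}$ to $V_{A_m, 0, D, 2p}$ for a fixed $p \in D_1 \cap D_2$. Imposing that $C$ pass through $p$ and be tangent there to $D$ (equivalently to $D_1$, since $D_2$ is contracted away) is, on the smooth surface $X$ near $p$, two linear-type conditions on the space of deformations of $C$: the general member of $V_{A_m,0}$ has $\nu^*T_X$ ample, so $H^1(C^\nu, \nu^* T_X(-q_1 - q_2)) = 0$ for two general points $q_1, q_2$, which gives that requiring a node $q$ to map to $p$ and a tangent direction to match costs exactly codimension $2$ in $V_{A_m, 0}$, and the resulting space $V_{A_m, 0, D, 2p}$ is nonempty of dimension $A_m D_1 - 1 = A_m D - 2$ with a general member still immersed. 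That a general member of $V_{A_m, 0, D, 2p}$ meets $D$ transversely at the remaining $A_m D - 2$ intersection points is then a genericity statement: the locus of curves in $V_{A_m, 0, D, 2p}$ with an extra tangency to $D$ is a proper subvariety, by a standard incidence-variety dimension count once we know $V_{A_m,0,D,2p}$ has the expected dimension and its general member moves enough (again using ampleness of $\nu^* T_X$ to move the curve through a general point of $D_1$ away from $p$).

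The main obstacle is the tangency/immersion bookkeeping of the previous paragraph: one must verify that imposing a prescribed point of tangency is genuinely a codimension-$2$ condition and does not drop the curve into a non-reduced or reducible stratum, and that the general member of the tangency-constrained family is still immersed and meets $D$ transversely elsewhere. I expect to handle this exactly as in Xu's original argument (and as in \cite[Chapter 3, Section B]{H-M}), leveraging the ampleness of $\nu^* T_X$ from Theorem \ref{LOGK3RATTHMLOGDELPEZZORATCON} to guarantee the vanishing of the obstruction group $H^1$ after imposing the two point conditions, so that all the relevant Severi-type subvarieties are smooth of expected dimension.
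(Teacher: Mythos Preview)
Your proposal has a genuine gap at the step you gloss over as ``after possibly replacing $\overline{A}$ by a small twist I can arrange $A_m \cdot D_2 = 1$.'' This is not a small twist; it is the main content of the proof. The Keel--McKernan curves on $\overline{X}$ avoid the singular point $\bar q$, so their proper transforms on $X$ satisfy $A \cdot D_i = 0$ for all $i \ne 1$. In particular $A \cdot D_2 = 0$, and any curve in $|mA|$ that passed through $p \in D_1 \cap D_2$ would have to contain $D_2$. So you cannot simply impose ``pass through $p$'' on these curves; you must first change the divisor class by something that meets $D_2$ exactly once and still carries rational curves. The paper does this by producing an auxiliary pencil: an effective divisor $F$ with $F^2 = 0$, $F D_1 = F D_2 = 1$, $F D_i = 0$ for $i \ge 3$. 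For $n = 2$ such an $F$ is just a fiber of a ruling, but for $n \ge 3$ no such $F$ exists on the nose, and the paper performs a long explicit sequence of canonical blowups, pivot operations, and blowdowns (this is exactly why the statement allows replacing $(X,D)$ by a log isomorphic model) to arrange that the proper transform of a suitable $(-1)$-curve becomes $F$. Once $F$ is in hand, one sets $A_m = mA + F$; the member $F_p \in |F|$ through $p$ already lies in $V_{F,0,D,2p}$, and gluing it to $m$ general members of $V_{A,0}$ and smoothing gives the required curve in $V_{A_m,0,D,2p}$.

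Your treatment of the tangency condition is also off. The condition ``$2p$'' in Definition \ref{LOGK3RATDEFSEVERITANGENT} asks that $\nu^* D$ have a point of multiplicity $\ge 2$ over $p$; since $p$ is a node of $D = D_1 + D_2 + \dots$, a single branch of $C$ through $p$ (transverse to both $D_1$ and $D_2$) already has $\nu^*(D_1 + D_2) = 2q$ there. So ``$2p$'' is one condition, not two, consistent with $\dim V_{A_m,0,D,2p} = A_m D - 2 = \dim V_{A_m,0} - 1$. Your claim that tangency at $p$ is ``equivalently to $D_1$, since $D_2$ is contracted away'' is therefore mistaken: the whole point is that the curve must meet $D_2$, and that intersection is what the construction of $F$ supplies.
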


\begin{proof}
Let us first prove that $(X,D)$ can replaced by a log isomorphic model such that
$D_1^2 > 0$,
the intersection matrix of $D-D_1$ is negative definite and there is an effective divisor $F$ on $X$ such that $F^2 = 0$ and $FD_1 = FD_2 = 1$.

If $n=2$, then there exists a fiberation $\pi: X\to \PP^1$ whose general fibers are $\PP^1$ and a fiber $F$ of $\pi$ has the required property. Suppose that $n\ge 3$. Then there exists a $(-1)$-curve $E$ such that $D_k E = 1$ for some $k\ne 1$. We prove that there exists a log isomorphism $f: (X,D) \dashrightarrow (\widehat{X}, \widehat{D})$ such that
the proper transform of $E$ is the divisor $F$ we want. This $f$ is given by a sequence of canonical blowups and blowdowns and pivot operations. First, we can replace $D_1$ by a chain of curves of self intersections $(-2,-2,...,-2,-1,0)$ by a sequence of canonical blowups over $D_1\cap D_n$:
\begin{equation}\label{LOGK3RATE110}
\begin{aligned}
&\quad (\lambda_1,\lambda_2, ..., \lambda_n)\\
&\to (\lambda_1-1,\lambda_2, ..., \lambda_n-1,-1)\\
&\to (\lambda_1-2,\lambda_2, ..., \lambda_n-1,-2,-1) \to ...\\
&\to (0,\lambda_2, \lambda_3, ..., \lambda_n-1,-2,-2, ..., -2, -1).
\end{aligned}
\end{equation}
Then a sequence of pivots at $D_1$ render $D_2^2=-1$, $D_2$ can then be contracted and $D_1^2$ is restored to $0$ by a canonical blowup:
\begin{equation}\label{LOGK3RATE502}
\begin{aligned}
&\to (0, -1, \lambda_3, ..., \lambda_n-1,-2,-2, ..., -2,\lambda_2)\\
&\to (1, \lambda_3+1, \lambda_4, ..., \lambda_n-1,-2,-2, ..., -2,\lambda_2)\\
&\to (0, \lambda_3+1, \lambda_4, ..., \lambda_n-1,-2,-2, ..., -2,\lambda_2-1,-1).
\end{aligned}
\end{equation}
We continue this process until $D_k$ is contracted:
\begin{equation}\label{LOGK3RATE503}
\begin{aligned}
&\to (0, -1, \lambda_4, ..., \lambda_n-1,-2,-2, ..., -2,\lambda_2-1,\lambda_3+1)\\
&\to (1, \lambda_4+1, ..., \lambda_n-1,-2,-2, ..., -2,\lambda_2-1,\lambda_3+1)\\
&\to (0, \lambda_4+1, ..., \lambda_n-1,-2,-2, ..., -2,\lambda_2-1,\lambda_3, -1)\\
&\to (0, -1, \lambda_5, ..., \lambda_n-1,-2, ..., -2,\lambda_2-1,\lambda_3, \lambda_4+1)\\
&\to (1, \lambda_5+1, ..., \lambda_n-1,-2, ..., -2,\lambda_2-1,\lambda_3, \lambda_4+1)\to ...\\
&\to (1, \lambda_k+1, ..., \lambda_n-1,-2, ..., -2,\lambda_2-1,\lambda_3, ..., \lambda_{k-2},\lambda_{k-1}+1)\\
&\to (0, \lambda_k+1, ..., \lambda_n-1,-2, ..., -2,\lambda_2-1,\lambda_3, ..., \lambda_{k-2},\lambda_{k-1},-1)\\
&\to (0, -1, \lambda_{k+1}, ..., \lambda_n-1,-2, ..., -2,\lambda_2-1,\lambda_3, ...,\lambda_{k-1},\lambda_k + 1)\\
&\to (1, \lambda_{k+1}+1, ..., \lambda_n-1,-2, ..., -2,\lambda_2-1,\lambda_3, ...,\lambda_{k-1},\lambda_k + 1)
\end{aligned}
\end{equation}
where \eqref{LOGK3RATE110}-\eqref{LOGK3RATE503} illustrate how the type of circular boundary $D$ changes in the process. At the last step, when we contract the proper transform $\widehat{D}_k$ of $D_k$, the self-intersection $E^2$ of $E$ increases by $1$ and its proper transform $F$ is what we are after.

Applying Theorem \ref{LOGK3RATTHMLOGDELPEZZORATCON} to the log del Pezzo surfaces $\overline{X}$ obtained from
$X$ by contracting $D - D_1$, we obtain base-point-free (bpf) and
big divisors $A$ on $X$ such that
\begin{equation}\label{LOGK3RATE133}
\begin{aligned}
A (D - D_1) &= 0,\\
\dim V_{A,0} &= A D - 1 \ge 2.
\end{aligned}
\end{equation}
Let $A_m = mA + F$. Clearly, $A_m D_2 = 1$, $A_m D_i = 0$ for $i\ne 1,2$ and $A_m D\to\infty$ as $m\to\infty$.

Let $F_p$ be the member of the pencil $|F|$ passing through $p$. Then the union
$\Gamma_1\cup \Gamma_2\cup ... \cup \Gamma_m\cup F_p$ can be deformed to a curve $C_m\in V_{A_m,0,D,2p}$ for $m$ general members $\Gamma_i\in V_{A,0}$. So
\begin{equation}\label{LOGK3RATE140}
\dim V_{A_m,0,D,2p} = A_m D - 2.
\end{equation}
Using the standard deformation theory and the rigidity lemma \ref{LOGK3RATLEMRIGIDITY}, it is easy to see that
a general member $C_m\in V_{A_m,0,D,2p}$ meets $D$ transversely
at $A_m D - 2$ points outside of $p$.
\end{proof}

Starting with \eqref{LOGK3RATE140},
naturally, we try to prove \eqref{LOGK3RATE138} by imposing more tangency conditions on $C_m\in V_{A_m,0,D,2p}$ at $p$.
We are going to do this inductively by increasing the multiplicity at $p$ one at a time. That is, we will roughly show that
\begin{equation}\label{LOGK3RATE141}
\dim V_{A_m,0,D,kp} = A_m D - k
\end{equation}
for all $k$.
When we deform/degenerate a family of rational curves on $X$ for this purpose, one difficulty arises: its flat limit might contain some components of $D$. To deal with this situation, we need the following key lemma. 

\begin{lem}\label{LEMK3RATKEYLEMMA}
Let $X$ be a smooth projective surface,
$D = D_1 + D_2 + ... + D_n$ be a circular boundary on $X$ and
$f: Y/\Delta\to X$ be a family of stable rational maps
over the unit disk $\Delta = \{|t| < 1\}$ satisfying that $Y_t\cong \PP^1$ and
$f(Y_t)$ meets $D$ properly for $t\ne 0$.
Suppose that
\begin{equation}\label{LOGK3RATE117}
\begin{aligned}
\dim (D \cap f(Y_0)) &> 0,\\
f^* D &= \sum_{i=1}^c m_i \Gamma_i + V \text{ and}\\
D_\text{sing} \cap f(\Gamma_i) &= \emptyset\text{ for } i\ne 1,2,
\end{aligned}
\end{equation}
where $V\subset Y_0$ and $\Gamma_i$ are distinct sections of
$Y/\Delta$ of multiplicities $m_i > 0$ in $f^*D$.

Let $G = \Gamma_1\cup \Gamma_2\cup \supp(V)\subset Y$ and
let $G^\circ$ be the curve obtained from $G$ by contracting all contractible components under the map $f: G\to D$. That is, $G \to G^\circ \to D$ is the Stein factorization of
$f: G\to D$. Then $G^\circ$ is a chain of curves given by (see Figure \ref{LOGK3RATFIGKEYLEMMA} for a configuration of $G^\circ$ when $n=2$)
\begin{equation}\label{LOGK3RATE106}
G^\circ = \Gamma_1\cup C_1\cup C_2 \cup \ldots \cup C_a \cup \Gamma_2
\end{equation}
where $\Gamma_1 \cap C_1 = q_0$, $C_i\cap C_{i+1} = q_{i}$,
$C_a\cap \Gamma_2 = q_a$, 
\begin{itemize}
\item $f(q_i) \in D_\text{sing}$ for $i=0,1,...,a$,
\item $f$ sends each $C_i$ onto one of $D_j$ with a map totally ramified over 
the two intersections $D_j\cap (D-D_j)$ for $i=1,2,...,a$,
\item $f$ maps $G^\circ$ locally at $q_i$ surjectively onto $D$ at $f(q_i)$
for $0< i < a$,
\item $f$ maps $G^\circ$ locally at $q_0$ surjectively onto $D$ at $f(q_0)$
if $f_*\Gamma_1 \ne 0$,
\item and $f$ maps $G^\circ$ locally at $q_a$ surjectively onto $D$ at $f(q_a)$
if $f_*\Gamma_2 \ne 0$.
\end{itemize}
In particular, if $f_*\Gamma_1 \ne 0$ and $f(\Gamma_1)\subset D_1$, $\Gamma_1$ lies on the connected component $M$ of $f^{-1}(D_1)$ such that $\Gamma_i\not\subset M$
for all $i\ne 1$ and
$f_* E = 0$ for all irreducible components $E\ne \Gamma_1\subset M$.
\end{lem}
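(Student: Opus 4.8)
The plan is to run a semistable‑reduction argument and then read off the structure of $G^\circ$ from the dual graph of $Y_0$ together with the induced map to the cycle $D$. After a finite base change and resolution I would replace $Y/\Delta$ by a smooth semistable model, so that $Y$ is a smooth surface, $Y_0$ is reduced normal crossing, and---since the stable map has genus $0$---the dual graph of $Y_0$ is a tree of smooth rational curves, with $f\colon Y\to X$ a morphism. Pulling back the Cartier divisor $D$ I write $f^*D=\sum_i m_i\Gamma_i+V$ with the $\Gamma_i$ horizontal (sections, as $Y_t\cong\PP^1$ is not a component of $D$) and $V$ vertical, supported on $Y_0$. Every component $E$ of $\supp V$ maps into $D$, hence onto some $D_j$ or to a point; the $f$‑contractible components of $G=\Gamma_1\cup\Gamma_2\cup\supp V$ are exactly the ones collapsed to points, so the components of $G^\circ$ are the non‑contracted images of $\Gamma_1,\Gamma_2$ together with finitely many smooth rational curves $C_1,\dots,C_a$, each finite onto some $D_j$.

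\emph{Local analysis over $D_\sing$.} Fix $C=C_i$ with $f(C)=D_j$. Restricting $f^*(D_{j-1}+D_{j+1})$ to $C$ yields two effective divisors on $C\cong\PP^1$, each of degree $\deg(f|_C)$ and supported on $(f|_C)^{-1}$ of the two nodes $D_{j-1}\cap D_j$, $D_j\cap D_{j+1}$. The essential input is the hypothesis that for $t\neq0$ the cycle $f(Y_t)$ meets $D$ \emph{properly}: near each node of $D$ the length of $f(Y_t)\cap D$ then equals the local intersection number, with nothing to spare, which prevents the flat limit from splitting the moving intersection points among several components or from arriving with sub‑maximal ramification. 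Pushing this through a local model of $f^*D$ on the surface $Y$ near $f^{-1}(D_\sing)$, I expect to get that $(f|_C)^{-1}$ of each of the two nodes is a single point, i.e.\ $f|_C$ is totally ramified there, and correspondingly that wherever two chain components meet, $f$ surjects locally onto both branches of $D$ at the corresponding point of $D_\sing$ (and likewise at the two ends when the relevant $f_*\Gamma$ is nonzero).

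\emph{Assembling the chain.} Since $Y_0$ is a tree and $G$ is a union of subtrees of $Y_0$ with the two sections attached, $G^\circ$ has arithmetic genus $0$; I claim it has no branch point. A branch point would be a node over $D_\sing$ where three or more of $\Gamma_1,\Gamma_2,C_1,\dots,C_a$ meet, so three local branches of $G^\circ$ map onto the two branches of $D$ there; two of them then land on the same $D_k$, which the total‑ramification statement together with the tree property of $Y_0$ excludes. Hence $G^\circ$ is a chain, and since $\Gamma_1,\Gamma_2$ are its only horizontal members they are forced to be its two ends, giving $G^\circ=\Gamma_1\cup C_1\cup\cdots\cup C_a\cup\Gamma_2$ with all $q_i$ over $D_\sing$, each $C_i$ mapping onto one $D_j$ totally ramified over its two nodes, and the surjectivity at $q_0,\dots,q_a$ as in the previous step. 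For the last assertion: if $f_*\Gamma_1\neq0$ and $f(\Gamma_1)\subset D_1$ then $\Gamma_1$ surjects onto $D_1$, its attaching node $q_0$ lies over a node of $D$, and the adjacent link $C_1$ maps onto a $D_j$ with $j\neq1$; so the connected component $M$ of $f^{-1}(D_1)$ through $\Gamma_1$ meets the rest of $f^{-1}(D)$ only in $q_0$, contains no other $\Gamma_i$ (the $\Gamma_i$ with $i\geq3$ avoid $D_\sing$, and $\Gamma_2$ sits at the far end of the chain), and contains no $C_i$ mapping onto $D_1$---hence every component of $M$ other than $\Gamma_1$ is $f$‑contracted.

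\emph{Main obstacle.} The crux is the local analysis over $D_\sing$: deducing total ramification of $f|_{C_i}$ over the nodes of $D$---and the local surjectivity statements---purely from the proper‑intersection hypothesis. This amounts to a conservation‑of‑intersection‑number computation on the surface $Y$ near $f^{-1}(D_\sing)$ that must rule out the limit dispersing the moving points of $f(Y_t)\cap D$ or losing ramification; once it is in hand, the remaining steps are graph combinatorics on the tree $Y_0$.
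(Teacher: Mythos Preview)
Your setup is right (semistable reduction, tree dual graph of $Y_0$, decomposition $f^*D=\sum m_i\Gamma_i+V$), but the logical order of your two main steps is inverted relative to what actually works, and the step you flag as the ``main obstacle'' is a genuine obstacle that the paper simply avoids.

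You propose to first prove total ramification of $f|_{C_i}$ over the nodes of $D$ by a conservation-of-intersection-number argument, and then use this to rule out branching in $G^\circ$. But proper intersection of $f(Y_t)$ with $D$ for $t\ne 0$ only says the intersection is zero-dimensional; the total length $f_*Y_t\cdot D$ is constant regardless, so there is no ``nothing to spare'' constraint preventing a non-contracted $C\subset Y_0$ from having several preimages of a node. Your appeal to intersection numbers does not pin this down, and your subsequent exclusion of branch points (``two of them land on the same $D_k$, which the total-ramification statement together with the tree property excludes'') is also unclear: even granting total ramification on each $C_i$, nothing yet prevents two distinct $C_i$'s mapping to the same $D_k$ from meeting at a common point of $G^\circ$.

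The paper runs the argument the other way around. The only local input needed is the following elementary observation: if $C\subset Y_0$ has $f_*C\ne 0$, $f(C)\subset D$, and $q\in C$ with $f(q)\in D_{\mathrm{sing}}$, then in a small analytic neighborhood of $q$ the connected component of $f^{-1}(D)$ through $q$ must contain something hitting the \emph{other} branch of $D$ at $f(q)$; hence $q$ is joined by a chain of $f$-contracted components either to one of $\Gamma_1,\Gamma_2$ or to another non-contracted $C'$ with $f(C')$ on that other branch. Now form the subgraph $\Sigma$ of the dual graph of $Y_0$ consisting of all such non-contracted $C$'s together with these connecting chains. Every vertex of $\Sigma$ acquires degree $\ge 2$ from this claim, except possibly two vertices where a chain terminates at $\Gamma_1$ or $\Gamma_2$ (the $\Gamma_i$ with $i\ge 3$ miss $D_{\mathrm{sing}}$ and cannot serve as endpoints). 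Since $\Sigma$ is a forest, an Euler-characteristic count forces $\Sigma$ to be a single chain with exactly two leaves. \emph{Then} total ramification falls out for free: each non-contracted $C$ has degree exactly $2$ in $\Sigma$, hence exactly one preimage of each of the two nodes of its image $D_j$.

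So the fix is to drop the intersection-number computation entirely, prove the local claim above (which is essentially your ``local surjectivity'' observation, but used as input rather than conclusion), and run the degree argument on the tree to get the chain structure first.
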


We call a curve $F = F_1\cup F_2\cup ...\cup F_n$ a {\em chain} of curves if the dual graph of $F$ is a chain, i.e., a tree with at most two vertices of degree $\le 1$.

\begin{figure}
\begin{tikzpicture}[scale=1]
\draw[thick, domain=-0.2:1.2] plot ({1*\x - \x*\x},\x);
\draw[thick, domain=-0.2:1.2] plot ({1*\x - \x*\x},\x+1);
\draw[thick, domain=-0.2:1.2] plot ({1*\x - \x*\x},\x-1);
\draw[thick, domain=-0.2:1.2] plot ({1*\x - \x*\x},\x-2);
\draw[thick, domain=-120:120] plot ({cos(\x) / 4 + 2} , \x/135 );
\draw[thick, domain=-120:120] plot ({-cos(\x) / 4 + 2} , \x/135 );
\draw[thick] (-1, -2) -- (0.3,-2);
\draw[thick] (-1, 2) -- (0.3,2);
\node[above] at (-1.1,2) {{$\Gamma_1$}};
\node[below] at (-1.1,-2) {{$\Gamma_2$}};
\node[below] at (2,-1.3) {{$D$}};
\node[right] at (0.1,-2.6) {{\tiny $\nu: \widehat{D}\to D$ the normalization}};
\node[left] at (0.3, 1.5) {{$C_1$}};
\node[left] at (0.3, 0.5) {{$C_2$}};
\node[left] at (0.3, -0.5) {{$C_3$}};
\node[left] at (0.3, -1.5) {{$C_4$}};
\node[above] at (2,0.75) {{\tiny $p$}};
\node[below] at (2,-0.75) {{\tiny $p'$}};
\node[above] at (0.1,2) {{\tiny $q_0$}};
\node[below] at (0.1,-2) {{\tiny $q_4$}};
\node[right] at (0,1) {{\tiny $q_1$}};
\node[right] at (0,0) {{\tiny $q_2$}};
\node[right] at (0,-1) {{\tiny $q_3$}};
\draw[->, thick] (0.5,0) -- (1.5,0);
\node[above] at (1,0) {{$f$}};

\draw[thick] (4, 2) -- (5.3,2);
\draw[fill] (5,2) circle [radius=0.05];
\node[above] at (5,2.1) {{\tiny $q_0$}};
\draw[thick, domain=-0.2:1.2] plot ({1*\x - \x*\x+5},\x+0.5);
\node[left] at (5.3, 1) {{$C_1$}};
\draw[fill] (5,1.5) circle [radius=0.05];
\draw[fill] (5,0.5) circle [radius=0.05];
\node[left] at (5,1.5) {{\tiny $q_0$}};
\node[left] at (5,0.5) {{\tiny $q_1$}};
\draw[->,thick] (5.1,1.5) -- (7.4,1.6667);
\draw[->,thick] (5.1,1.9) -- (7.35,-0.2533);
\draw[->,thick] (5.1,0.5) -- (7.4,0.3333);

\draw[thick, domain=-120:120] plot ({cos(\x) / 4 + 7.5} , \x/135 -1);
\node[right] at (7.75,-1) {{$D_1$}};
\draw[thick, domain=-120:120] plot ({-cos(\x) / 4 + 7.5} , \x/135 + 1);
\node[right] at (7.75,1) {{$D_2$}};

\node[above] at (3.9,2) {{$\Gamma_1$}};
\node[below] at (3.9,-2) {{$\Gamma_2$}};

\node[below] at (7.5,-2) {{$\widehat{D}$}};
\draw[fill] (7.5,-0.3333) circle [radius=0.05];
\draw[fill] (7.5,-1.6667) circle [radius=0.05];
\node[right] at (7.5,-0.3333) {{\tiny $p$}};
\node[right] at (7.5,-1.6667) {{\tiny $p'$}};
\draw[fill] (7.5,1.6667) circle [radius=0.05];
\draw[fill] (7.5,0.3333) circle [radius=0.05];
\node[right] at (7.5,1.6667) {{\tiny $p$}};
\node[right] at (7.5,0.3333) {{\tiny $p'$}};

\draw[thick] (4, -2) -- (5.3,-2);
\draw[fill] (5,-2) circle [radius=0.05];
\node[below] at (5,-2.1) {{\tiny $q_4$}};
\draw[->, thick] (5,-1.9) -- (7.3,1.5567);

\draw[thick, domain=-0.2:1.2] plot ({1*\x - \x*\x+5},\x-1.5);
\node[left] at (5.3, -1) {{$C_4$}};
\draw[fill] (5,-0.5) circle [radius=0.05];
\draw[fill] (5,-1.5) circle [radius=0.05];
\node[left] at (5,-0.5) {{\tiny $q_3$}};
\node[left] at (5,-1.5) {{\tiny $q_4$}};
\draw[->, thick] (5.1,-0.5) -- (7.4,-1.6667);
\draw[->, thick] (5.1,-1.5) -- (7.4,-0.3333);
\end{tikzpicture}
\caption{A configuration of $G^\circ$ for $f: G^\circ \to D = D_1 + D_2$}
\label{LOGK3RATFIGKEYLEMMA}
\end{figure}
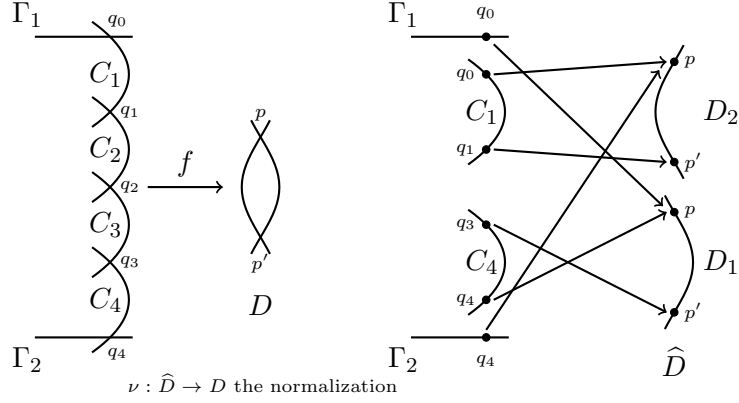

\begin{proof}[Proof of Lemma \ref{LEMK3RATKEYLEMMA}]
We first prove the following statement:

\begin{claim}\label{LOGK3RATCLAIMKEYLEMMA}
For every component $C\subset Y_0$ and a point $q\in C$ satisfying that $f_*C \ne 0$,
$f(C) \subset D$ and $f(q) \in D_\text{sing}$,
there exists either a chain $C\cup E_1\cup E_2\cup ...
\cup E_a\cup \Gamma_i$ of curves satisfying
\begin{equation}\label{LOGK3RATE118}
\begin{aligned}
& C\cap E_1 = q, E_k\cap E_{k+1} \ne \emptyset,
f_* E_k = 0, E_a\cap \Gamma_i = q' \text{ for some }
\Gamma_i\\
&\hspace{24pt} \text{and } f \text{ maps }
C \text{ and } \Gamma_i \text{ locally at }
q \text{ and } q'\\
&\hspace{36pt}\text{to the two branches of }
D \text{ at } f(q), \text{ respectively, if } f_* \Gamma_i \ne 0 
\end{aligned}
\end{equation}
or a chain $C\cup E_1\cup E_2\cup ... \cup E_a\cup C'$ of curves satisfying
\begin{equation}\label{LOGK3RATE119}
\begin{aligned}
& C\cap E_1 = q, E_k\cap E_{k+1} \ne \emptyset,
f_* E_k = 0, E_a\cap C' = q'\\
& \text{ for some component }
C'\subset Y_0 \text{ with } f_*(C') \ne 0 \text{ and } f(C') \subset D\\
&\hspace{48pt}\text{and } f \text{ maps }
C \text{ and } C' \text{ locally at }
q \text{ and } q'\\
&\hspace{72pt}\text{to the two branches of }
D \text{ at } f(q), \text{ respectively}.
\end{aligned}
\end{equation}
\end{claim}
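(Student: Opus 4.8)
\emph{Strategy and set-up.} The plan is to produce the chain by tracing the pullback of the \emph{other} branch of $D$ at $f(q)$ through the central fibre $Y_0$, one component at a time, terminating because $Y_0$ is a finite tree. After a finite base change $\Delta\to\Delta$ and a resolution we may assume $Y$ is a smooth surface, $Y_0$ is a reduced nodal curve of arithmetic genus $0$ (a tree of smooth rational curves), and --- $X$ smooth, $D$ snc --- each $f^{\ast}D_i$ is an effective Cartier divisor on $Y$. Fix $C$, $q$ with $f_{\ast}C\neq 0$, $f(C)\subset D$, $f(q)\in D_{\text{sing}}$. Since $f(C)$ is an irreducible curve inside the snc curve $D$ it is a single component; relabel so $f(C)=D_1$. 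A node of a circular boundary lies on exactly two components, so $f(q)=D_1\cap D_2=:p_0$; fix coordinates $(x,y)$ on $X$ at $p_0$ with $D_1=\{x=0\}$, $D_2=\{y=0\}$. Note that any component of $Y_0$ contracted by $f$ onto $p_0$ is sent into both $D_1$ and $D_2$, hence is a component of both $f^{\ast}D_1$ and $f^{\ast}D_2$, whereas $C$ is a component of $f^{\ast}D_1$ only.

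\emph{The trace.} We construct a simple path $C=C_0,C_1,C_2,\dots$ in the dual graph of $Y_0$, with distinguished nodes $q=q_0,q_1,\dots$ satisfying $q_\ell\in C_{\ell-1}\cap C_\ell$ and $f(q_\ell)=p_0$, such that each $C_\ell$ with $\ell\ge 1$ is contracted by $f$ onto $p_0$. Given $(C_\ell,q_\ell)$, first produce a component $Z\neq C_\ell$ of $f^{\ast}D_2$ through $q_\ell$. For $\ell=0$ this is immediate: $q_0\in C_0\cap\supp f^{\ast}D_2$ because $f(q_0)\in D_2$, and $C_0\not\subset f^{\ast}D_2$. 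For $\ell\ge 1$, put $E=C_\ell$ and write $f^{\ast}D_1=aE+R_1$, $f^{\ast}D_2=bE+R_2$ with $R_1,R_2$ effective, $E\not\subset\supp R_j$, $a,b\ge 1$; from $f^{\ast}D\cdot E=0$ together with $\supp f^{\ast}D_i\cap E=\emptyset$ for $i\neq 1,2$ (valid since $f(E)=p_0\notin D_i$ and, by hypothesis, $D_{\text{sing}}\cap f(\Gamma_i)=\emptyset$ for $i\neq1,2$) one gets $(R_1+R_2)\cdot E=-(a+b)E^{2}>0$, and a bookkeeping of the local contributions at the nodes of $E$ shows $f^{\ast}D_2$ meets $E$ at a node $q_{\ell+1}\neq q_\ell$, lying on a component $Z\neq E$ of $f^{\ast}D_2$. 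Now classify $Z$. If $Z$ is horizontal over $\Delta$, then $Z$ is one of the sections $\Gamma_i$ (the horizontal part of $f^{\ast}D$ being $\sum m_i\Gamma_i$), and $Z\subset f^{\ast}D_2$ gives $f(\Gamma_i)\subseteq D_2$; we stop, with chain $C\cup C_1\cup\dots\cup C_\ell\cup\Gamma_i$ and $q'=q_{\ell+1}=E\cap\Gamma_i$. If $Z\subset Y_0$ with $f_{\ast}Z\neq 0$, then $f(Z)$ is an irreducible curve in $D_2$, hence $f(Z)=D_2$; we stop, with $C':=Z$ and $q'=q_{\ell+1}=E\cap C'$. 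If $Z\subset Y_0$ with $f_{\ast}Z=0$, then $f(Z)=\{f(q_{\ell+1})\}=\{p_0\}$, so $C_{\ell+1}:=Z$ is contracted onto $p_0$; continue.

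\emph{Termination and the boundary conditions.} The walk never returns to $C_0=C$ (not a component of $f^{\ast}D_2$), and since $Y_0$ is a tree and each step leaves $C_\ell$ through a node distinct from the one it entered, the walk is a simple path; as $Y_0$ has finitely many components it terminates, necessarily in the ``horizontal'' or the ``$f_{\ast}Z\neq 0$'' case. By construction $f_{\ast}C_\ell=0$ for $1\le\ell\le a$ (the contracted members $E_1,\dots,E_a$), consecutive members meet, $C\ni q$ and the terminal $\Gamma_i$ (or $C'$) contains $q'$. Finally $f$ maps $C$ near $q$ onto the branch $D_1=\{x=0\}$ at $p_0$ (as $f(C)=D_1$), and it maps the terminal member near $q'$ into the branch $D_2=\{y=0\}$ at $p_0=f(q)=f(q')$ because that member is a component of $f^{\ast}D_2$; this is exactly the condition required, and is needed only when $f_{\ast}\Gamma_i\neq 0$ (then $f(\Gamma_i)$ is the full curve $D_2$ through $p_0$). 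This yields the two alternatives \eqref{LOGK3RATE118} and \eqref{LOGK3RATE119}.

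\emph{Main obstacle.} The technical heart is the ``bookkeeping of local contributions'' in the continuation step: one must show that at a contracted component the pullback of the other branch genuinely exits through a \emph{new} node, and that the branches $D_1$ and $D_2$ stay correctly paired, so the walk cannot circulate forever among components collapsing to $p_0$ without ever reaching a $D_2$-image component or a section. I expect this to reduce to a careful local model of $f$ near a chain of rational curves contracting onto a node of $D$, controlling the multiplicities of $f^{\ast}D_1$ and $f^{\ast}D_2$ along that chain; this is where the snc hypothesis on $D$, the assumption $D_{\text{sing}}\cap f(\Gamma_i)=\emptyset$ for $i\neq 1,2$, and the tree structure of $Y_0$ all enter.
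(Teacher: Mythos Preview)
Your strategy --- trace the $D_2$-branch of $D$ through the central fibre starting at $q$ --- is exactly the paper's. The paper, however, executes it differently and in a way that dissolves the obstacle you flag at the end. Rather than build the chain one component at a time, the paper takes an analytic neighbourhood $U$ of $p_0=f(q)$ and works with the entire connected component $M$ of $f^{-1}(U)$ containing $q$; the chain of contracted components then comes for free from the connectedness of $M\cap Y_0$, once one knows that \emph{some} non-contracted piece of $f^{-1}(D_2)$ (a section $\Gamma_i$ or a local branch of a component $C'$ with $f(C')=D_2$) lies in $M$.

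Your step-by-step walk runs into a real difficulty at $\ell\ge 2$: you need $\supp f^\ast D_2$ to exit $E=C_\ell$ through a node other than $q_\ell$, but the intersection identity $(R_1+R_2)\cdot E>0$ does not by itself give this, since $C_{\ell-1}$ already lies in $\supp f^\ast D_2$ and may account for all of $R_2\cdot E$. Concretely, a contracted component whose other $Y_0$-neighbours all map onto $D_1$ would stall the walk. The clean way around this --- which is what the paper's connectivity argument encodes --- is to argue globally: let $N_0$ be the connected subtree of $Y_0$ consisting of components contracted to $p_0$ and meeting $C$ at $q$. Since $N_0\subsetneq Y_0$, its intersection form is negative definite, so writing $f^\ast D_2 = A+B$ with $A$ supported on $N_0$, one gets $0=f^\ast D_2\cdot N_0$ while $A\cdot N_0<0$, hence $B\cdot N_0>0$. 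Any component of $B$ meeting $N_0$ is then either a section $\Gamma_i$ with $f(\Gamma_i)\subset D_2$ (necessarily $i\in\{1,2\}$, since its image hits $p_0\in D_{\mathrm{sing}}$) or a component $C'\subset Y_0$ with $f(C')=D_2$; the path inside the tree $N_0$ from $q$ to that meeting point gives the chain $E_1,\dots,E_a$. This replaces your node-by-node bookkeeping with a single intersection computation and makes the termination automatic.
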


WLOG, we assume that $f(C) = D_1$ and $f(q) = p = D_1\cap D_2$.
The statement is local. So we choose an analytic open neighborhood $U$ of $p\in X$ and
let $M$ be the connected component of $f^{-1}(U)$ that contains
$q$. Since $q\in f^{-1}(D_2)$, we have either
$\Gamma_i \cap M \ne \emptyset$ for some $\Gamma_i$
with $f(\Gamma_i\cap M) \subset D_2$
or
$C'\cap M \ne \emptyset$ for some component $C'\subset Y_0$
with $f(C'\cap M) = D_2\cap U$ such that $\Gamma_i$ or $C'$ is joined to $C$ by a chain of contractible components. In addition, if it is the former and $f_*\Gamma_i \ne 0$,
we necessarily have $f(\Gamma_i\cap M) = D_2\cap U$.
This proves Claim \ref{LOGK3RATCLAIMKEYLEMMA}.

We let $\Sigma$ be the subgraph of the dual graph of $Y_0$ that contains all components $C\subset Y_0$ satisfying $f_* C \ne 0$ and
$f(C) \subset D$ and all chains $C\cup E_1\cup E_2\cup ... \cup E_a\cup C'$ with the property \eqref{LOGK3RATE119}. Note that every contractible component in $\Sigma$ has degree $\ge 2$.

If $D_\text{sing}\cap f(\Gamma_i) = \emptyset$ for all but one $\Gamma_i$, then
by Claim \ref{LOGK3RATCLAIMKEYLEMMA},
all but one vertices in $\Sigma$ have degree $\ge 2$
with the remaining vertex of degree $\ge 1$,
which contradicts the fact that $\Sigma$ is a disjoint union of trees.

Therefore, $D_\text{sing}\cap f(\Gamma_i)\ne\emptyset$ for $i = 1,2$ and
all but two vertices in $\Sigma$ have degree $\ge 2$
and the remaining two vertices have total degree $\ge 2$.
So $\Sigma$ has to be a chain. Then it is easy to see that
$G^\circ$ is a chain of curves with the properties described by the lemma.
\end{proof}

To finish the proof of Proposition \ref{LOGK3RATPROPCB} and thus settle the last case of Theorem \ref{LOGK3RATTHM001}, it remains to prove the following:

\begin{prop}\label{LOGK3RATPROPTANGENTD1D2}
Let $X$ be a smooth projective surface, $D = \sum_{i=1}^n D_i$ be a circular boundary on $X$, $p\in D_1\cap D_2$ and $A$ be a divisor on $X$ satisfying that 
$a = AD = A(D_1+D_2) > AD_2 = 1$.
If $K_X + D$ is pseudo-effective, $D_1^2 > 0$, $\dim V_{A, 0, D, 2p} = a - 2$ and a general member of $V_{A, 0, D, 2p}$ meets $D$ transversely at $a-2$ points outside of $p$, then
for each $0\le l \le a-2$,
there exist $m_0, m_1, ..., m_{a-l-2}\in \BZ^+$ and $A_l \le A$ such that
\begin{equation}\label{LOGK3RATE112}
\begin{aligned}
& a = A_l D = 1 + \sum_{i=0}^{a-l-2} m_i \text{ and}
\\
& V_{A_l, 0, D, \alpha} \ne \emptyset \text{ for }
\alpha = (m_0+1) p + \sum_{i=1}^{a-l-2} m_i p_i
\end{aligned}
\end{equation}
where $p_1, p_2, ..., p_{a-l-2}$ are $a-l-2$ general points on $D_1$ and
we write $A \ge B$ for two divisors $A$ and $B$ if $A-B$ is effective.
\end{prop}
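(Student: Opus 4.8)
The plan is to prove Proposition \ref{LOGK3RATPROPTANGENTD1D2} by descending induction on $l$, starting from $l = a-2$. The base case $l = a-2$ is exactly the hypothesis: taking $A_{a-2} = A$, $m_0 = 1$ (so $\alpha = 2p$), we have $V_{A,0,D,2p}\ne\emptyset$ of dimension $a-2$. For the inductive step, assuming the statement for some $l+1$, we want to produce the statement for $l$ by taking a general one-parameter subfamily of curves in $V_{A_{l+1},0,D,\alpha'}$ (where $\alpha' = (m_0'+1)p + \sum m_i' p_i$ with one of the general points, say $p_{a-l-3}$, specialized to $p$), and degenerating it. The idea, following Xu's ``bend-and-not-break'' philosophy: as we force an extra point of contact to collide with $p$, the limiting stable map either gives us a new integral curve with higher tangency at $p$ (and we are done), or it breaks, and we must analyze the broken limit using Lemma \ref{LEMK3RATKEYLEMMA}.

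First I would set up the degeneration: start from $V_{A_{l+1},0,D,\alpha'}$ which, by the inductive hypothesis together with the transversality statement, has a general member meeting $D$ transversely at $a-(l+1)-2 = a-l-3$ points $p_1,\dots,p_{a-l-3}$ on $D_1$ plus the prescribed tangency at $p$. Moving $p_{a-l-3}$ along $D_1$ toward $p$ gives a one-parameter family $f: Y/\Delta \to X$ of stable rational maps with $f(Y_t)$ meeting $D$ properly for $t\ne 0$; I would pass to the stable limit over $t=0$. If $f(Y_0)$ still meets $D$ properly, then it is (the image of) a stable map whose associated cycle gives an integral curve — after discarding contracted/multiple components one checks via the pseudo-effectivity of $K_X+D$ and the rigidity Lemma \ref{LOGK3RATLEMRIGIDITY} that no component of $D$ can appear — with contact multiplicity at $p$ now at least $m_0'+1 + 1 = m_0 + 1$ where $m_0 = m_0'+1$, and we set $A_l = A_{l+1}$; this handles the ``doesn't break/doesn't absorb $D$'' case. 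Otherwise $\dim(D\cap f(Y_0)) > 0$, i.e. some component of $Y_0$ maps onto a component of $D$; this is precisely the hypothesis \eqref{LOGK3RATE117} of Lemma \ref{LEMK3RATKEYLEMMA}.

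In the absorbing case, Lemma \ref{LEMK3RATKEYLEMMA} tells us the subcurve $G^\circ$ carrying the components mapping into $D$ is a chain $\Gamma_1 \cup C_1 \cup \dots \cup C_a \cup \Gamma_2$ with each $C_i$ mapping onto some $D_j$ totally ramified over the two nodes $D_j\cap(D-D_j)$, and $\Gamma_1, \Gamma_2$ the two sections of $Y/\Delta$ through the points of $f^*D$. Since $D_1^2 > 0$ and the intersection matrix of $D - D_1$ is negative definite (this is why we arranged $D_1^2 > 0$ in Proposition \ref{LOGK3RATPROPRATLOGDELPEZ}), a component mapping onto $D_1$ cannot be contracted while mapping totally ramified — more precisely, the chain structure forces that at most finitely many of the $C_i$ can map onto $D_1$, and the part of $f(Y_0)$ not absorbed into $D$ is a genuine rational curve $C''$ whose class $A_l := A - f_*(\text{absorbed part})$ satisfies $A_l \le A$, and whose contact with $D$ at $p$ (picking up the tangency from the $\Gamma_i$ branches glued onto $C''$ at $p$) is at least $m_0+1$. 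The intersection-theoretic bookkeeping: the absorbed part contributes a nonnegative amount to $A\cdot D_2 = 1$ and to $A\cdot D_1 = a-1$, and since $A\cdot D_2 = 1$ the $D_2$-absorption is severely constrained, which is what pins down that $C''$ is integral and retains enough tangency. I would then extract an irreducible component $C_l$ of the non-absorbed rational curve with maximal contact at $p$, verify it meets $D$ only at $p$ and the residual general points, and read off $A_l\cdot D$ and the multiplicities $m_i$ from its intersection numbers to get \eqref{LOGK3RATE112}.

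The main obstacle will be the absorbing case: controlling exactly how much of the curve class is ``lost'' to the components $C_i$ mapping onto pieces of $D$, and showing that the residual non-absorbed part is still a single integral rational curve carrying contact multiplicity $\ge m_0+1$ at $p$ rather than fragmenting its tangency among several components or several points of $D_1$. This is where Lemma \ref{LEMK3RATKEYLEMMA}'s precise chain description — especially the final assertion that $\Gamma_1$ lies on a connected component $M$ of $f^{-1}(D_1)$ with $f_*E = 0$ for all other $E\subset M$ — does the real work: it guarantees the absorption into $D_1$ happens through contracted bridges, so the class lost to $D_1$ is controlled, and the residual curve is glued back to the rest at the nodes along the chain, keeping it connected of arithmetic genus zero. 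The constraint $A\cdot D_2 = 1$ then kills any absorption into $D_2$ beyond a single simple ramified cover, so the bookkeeping closes. Everything else (the base case, the non-absorbing case, the deformation-theoretic transversality of the residual curve) is routine given the earlier lemmas.
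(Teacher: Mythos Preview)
Your induction runs the wrong way: the case $l=a-2$ has \emph{no} free points and $m_0=a-1$, i.e.\ $\alpha=ap$; this is the goal, not the hypothesis. The base case is $l=0$ (all $m_i=1$, $\alpha=2p+p_1+\dots+p_{a-2}$), and the induction is ascending in $l$. This is a labeling slip, but it propagates: your ``from $l+1$ to $l$'' is really from $l$ to $l+1$.

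The substantive gap is in the degeneration. You propose to move a free point $q=p_j$ along $D_1$ \emph{to} $p$ and read off increased tangency at $p$ from the limit. This does not work as stated: when $q\to p$, the section $Q$ and the fixed section $P$ through $p$ land at (a priori) two distinct points of the central fibre $Y_0$, each mapping to $p$; the normalization of $f_*Y_0$ then has two preimages of $p$ with contacts $m_0$ and $m_1$ separately, which is \emph{not} a point of $V_{A,0,D,(m_0+m_1+1)p+\lambda}$ in the sense of Definition \ref{LOGK3RATDEFSEVERITANGENT}. You need the two branches to merge, and nothing you wrote forces that.

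The paper does something different and this is the key idea you are missing. It does not specialise $q$ to $p$. Instead it lets $q$ range over all of $D_1$, giving a family $f:Y/B\to X$ with $B$ finite over $D_1$, and writes $f^*D_1=m_0P+m_1Q+\Lambda+V$ with $V$ vertical. Because $D_1^2>0$, $D_1$ is big and nef, hence $f^{-1}(D_1)$ is \emph{connected}; therefore $Q$ is joined to one of $P,P_i$ by a chain $V_0\subset V$ sitting in some fibre $Y_b$. That fibre---over some unspecified $q\in D_1$, not $q=p$---is where the collision happens. Lemma \ref{LEMK3RATKEYLEMMA} is then invoked not to ``analyse the absorbed part'' but to \emph{rule out absorption}: if $\dim(f(Y_b)\cap D)>0$, the lemma says $Q$ lies on a connected component of $f^{-1}(D_1)$ not containing any $P,P_i$, contradicting connectedness. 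Hence $f(Y_b)$ meets $D$ properly, $f_*V_0=0$, and the two contact branches genuinely coalesce in the image.

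Finally, the irreducibility of the resulting curve is not a matter of ``$A\cdot D_2=1$ kills absorption into $D_2$''. The paper argues: if $C=C_1+C_2$ with $C_kD>0$, then $CD_2=1$ forces one piece, say $C_2$, to miss $p$ entirely; then $C_2$ meets $D$ only at general points of $D_1$, giving a relation $\sum a_{2j}p_j\in i_D^*\Pic(X)$ in $\Pic(D)$, impossible for general $p_j$ since $i_D^*\Pic(X)$ is finitely generated. You should incorporate this step explicitly.
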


\begin{proof}
Since a general member of $V_{A, 0, D, 2p}$ meets $D$ transversely at $a-2$ points outside of $p$, we have
\begin{equation}\label{LOGK3RATE114}
V_{A,0,D,\alpha} \ne\emptyset \text{ for } \alpha = 2p + p_1 + p_2 + ... + p_{a-2}
\end{equation}
where $p_1, p_2, ..., p_{a-2}$ are $a-2$ general points on $D_1$. So the proposition holds for $l=0$. We argue by induction on $l$.

Suppose that there exist $m_0, m_1, ..., m_{a-l-2}\in \BZ^+$ such that
\begin{equation}\label{LOGK3RATE120}
\dim V_{A, 0, D, \alpha} = 0 \text{ for }
\alpha = (m_0+1) p + \sum_{i=1}^{a-l-2} m_i p_i.
\end{equation}
Among $p_i$, we fix $a-l-3$ general points $p_2, p_3, ..., p_{a-l-2}$ on $D_1$, let
\begin{equation}\label{LOGK3RATE115}
\lambda = \sum_{i=2}^{a-l-2} m_i p_i
\end{equation}
and let $q = p_{1}$ vary. More precisely, we consider the closure $W$ of
\begin{equation}\label{LOGK3RATE107}
\begin{aligned}
W^\circ &= \{ (C, q): C\in V_{A,0,D,(m_0+1) p+ m_1 q + \lambda} \text{ and } q\in D_1 \text{ general} \}\\
&\subset |A| \times D_1.
\end{aligned}
\end{equation}
By induction hypothesis \eqref{LOGK3RATE120}, $W$ is finite over $D_1$.

Let us consider the stable maps associated to the family of curves $C_q\in W_q$ over $q\in D_1$.
That is, there exists a finite morphism $\phi: B\to W\to D_1$ and
a family $f: Y/B\to X$ of stable rational maps
satisfying that $f_* Y_b \in W_{\phi(b)}$ for all $b\in B$.

Obviously,
\begin{equation}\label{LOGK3RATE111}
f^*D_1 = m_0 P + m_1 Q + \sum_{i=2}^{a-l-2} m_i P_i + V = m_0 P + m_1 Q + \Lambda + V
\end{equation}
where $\pi_* V = 0$ for $\pi:Y\to B$ and $P, P_i$ and $Q$ are the sections of $Y/B$ satisfying that
$f(P) = p$, $f(P_i) = p_i$ and $f(Q\cap Y_b) = \phi(b)$ for all $b\in B$.

In other words, $Q$ is the moving intersections
between $f_* Y_b$ and $D_1$, while $P$ and $P_i$ are the fixed intersections. We want to show that $Q$ ``collides'' with one of $P$ and $P_i$, which will reduce the number of points in $\{p_1,p_2,...,p_{a-l-2}\}$ and thus increase $l$ by one.

One of the key hypotheses is $D_1^2 > 0$. So $D_1$ is nef and big. Consequently,
$f^{-1}(D_1)$ is connected. So $Q$ and one of $P$ and $P_i$ are joined by a chain of curves in $V$. More precisely, either $P + V_0 + Q$ or $P_i + V_0 + Q$ is connected for some $i$ and a connected component $V_0$ of $V$ contained in a fiber $Y_b$. We will be almost done if
$f(Y_b)$ meets $D$ properly, i.e.,
\begin{equation}\label{LOGK3RATE105}
\dim (f(Y_b)\cap D) = 0,
\end{equation}
which implies that $f_* V_0 = 0$.
This is guaranteed by our key lemma: If $f(Y_b)$ fails to meet $D$ properly, then since $f(Q) = D_1$, by Lemma \ref{LEMK3RATKEYLEMMA},
$Q$ lies on a connected component $M$ of $f^{-1}(D_1)$ such that
$P, P_i\not\subset M$ in an analytic open neighborhood of $Y_b$. Contradiction.
So we necessarily have \eqref{LOGK3RATE105}.

Therefore, $C = f_* Y_b$ meets $D$ properly and $f_* V_0 = 0$. Hence
\begin{equation}\label{LOGK3RATE121}
C . D = (m_0 + m_1 + 1) p + \sum_{j=2}^{a-l-2} m_j p_j
\end{equation}
if $P + V_0 + Q$ is connected and
\begin{equation}\label{LOGK3RATE123}
C . D = (m_0 + 1) p + m_1 p_i + \sum_{j=2}^{a-l-2} m_j p_j
\end{equation}
if $P_i + V_0 + Q$ is connected for some $2\le i\le a-l-2$.

We claim that we cannot write $C = C_1 + C_2$ with $C_k\ge 0$ and $C_k D > 0$.
Otherwise, since $C D_2 = 1$, one of $C_1$ and $C_2$ does not pass through $p$. Thus,
\begin{equation}\label{LOGK3RATE124}
\begin{aligned}
C_1 . D &= a_{11} p + \sum_{j=2}^{a-l-2} a_{1j} p_j\\
C_2 . D &= \sum_{j=2}^{a-l-2} a_{2j} p_j
\end{aligned}
\end{equation}
for some $a_{1j}, a_{2j}\in \BN$. So $C_2$ meets $D$ at the smooth points on $D_1$ and hence
\begin{equation}\label{LOGK3RATE125}
\sum_{j=2}^{a-l-2} a_{2j} p_j = i_D^* C_2 \in i_D^*\Pic(X)
\end{equation}
in $\Pic(D)$. Note that $i_D^*\Pic(X)$ is a finitely generated subgroup of $\Pic(D)$. On the other hand, $p_2, p_3, ..., p_{a-l-2}$ are general points on $D_1$. So \eqref{LOGK3RATE125} cannot hold. Therefore, we necessarily have
\begin{equation}\label{LOGK3RATE126}
C = \Gamma + E
\end{equation}
with $\Gamma$ integral, $E$ effective and $\Gamma D = A D = a$.
Clearly, $\Gamma\in V_{A_{l+1}, 0, D, \alpha}$ for $A_{l+1} = \Gamma$ and
\begin{equation}\label{LOGK3RATE127}
\begin{aligned}
\alpha &= (m_0 + m_1 + 1) p + \sum_{j=2}^{a-l-2} m_j p_j \text{ or}\\
\alpha &= (m_0 + 1) p + m_1 p_i + \sum_{j=2}^{a-l-2} m_j p_j
\end{aligned}
\end{equation}
for some $2\le i \le a-l-2$.
\end{proof}

\section{Iitaka Models}\label{LOGK3RATSECIITAKA}

\subsection{Iitaka models}

Iitaka had a complete classification of log K3's. More generally, Iitaka and Zhang also classified Iitaka surfaces, which are log K3's with the condition $h^0(\Omega_X(\log D)) = 0$ removed. For our purpose, we just need the following \cite[Theorem 3 \& Theorem $\text{II}_\text{a}$ \& Table $\text{II}_\text{a}$ \& Proposition 16
\& Table $\text{II}_\text{b}$]{I}

\begin{thm}[Iitaka's Classifications of Type II log K3]\label{LOGK3RATTHMIITAKA}
For every Type $\mathrm{II}$ log K3 $(X, D)$, there exists a birational morphism $f: X\to \overline{X}$, where $\overline{X}$ is a minimal rational surface,
$\overline{D} = f_* D$ is a nc divisor and
$(\overline{X}, \overline{D})$ is one of the following:
\begin{enumerate}
\item[(a-i)]
$(\PP^2, E)$ where $E$ is a smooth elliptic curve;
\item[(a-ii)]
$(\BF_0, E)$ where $E$ is a smooth elliptic curve;
\item[(a-iii)]
$(\BF_2, E)$ or $(\BF_2, E + \Delta_\infty)$, where $E$ is a smooth elliptic curve and $\Delta_\infty$ is the section of $\BF_2/\PP^1$ with $\Delta_\infty^2 = -2$;
\item[(b-i)]
$(\PP^2, H_1 + H_2 + H_3)$ where each $H_i$ is a line on $\PP^2$;
\item[(b-ii)]
$(\BF_0, H_1 + H_2 + G_1 + G_2)$ where each $H_i$ has type $(1,0)$
and each $G_j$ has type $(0,1)$;
\item[(b-iii)]
$(\BF_\beta, \Delta_\lambda + \Delta_\infty + F_1 + F_2)$ where
$\Delta_\lambda$ and $\Delta_\infty$ are two sections of $\BF_\beta/\PP^1$ satisfying
$\Delta_\lambda^2 = - \Delta_\infty^2 = \beta\ge 2$ and each $F_i$ is a fiber;
\item[(b-iv)]
$(\PP^2, H+C)$ where $H$ is a line and $C$ is a conic;
\item[(b-v)]
$(\BF_0, C_1 + C_2)$ where each $C_i$ has type $(1,1)$;
\item[(b-vi)]
$(\BF_2, \Delta_0 + \Delta_\lambda)$
or $(\BF_2, \Delta_0 + \Delta_\lambda + \Delta_\infty)$
where $\Delta_0$, $\Delta_\lambda$ and $\Delta_\infty$ are sections of $\BF^2/\PP^1$ satisfying $\Delta_0^2 = \Delta_\lambda^2 = -\Delta_\infty^2 = 2$;
\item[(b-vii)]
$(\BF_\beta, F + \Delta_\infty + C_3)$ where $F$ is a fiber and $\Delta_\infty$ and $C_3$ are two sections of $\BF_\beta/\PP^1$ satisfying $-\Delta_\infty^2 = C_3^2 - 2
= \beta \ge 2$;
\item[(b-viii)]
$(\BF_0, H + G + C)$ where $H, G$ and $C$ has types $(1,0)$, $(0,1)$ and $(1,1)$, respectively;
\item[(b-ix)]
$(\PP^2, E)$ where $E$ is a nodal rational curve with one node;
\item[(b-x)]
$(\BF_0, E)$ where $E$ is a nodal rational curve with one node;
\item[(b-xi)]
$(\BF_2, E)$ or $(\BF_2, E + \Delta_\infty)$, where $E$ is a nodal rational curve with one node and $\Delta_\infty$ is the section of $\BF_2/\PP^1$ with $\Delta_\infty^2 = -2$;
\item[(b-xii)]
$(\BF_0, C_1 + C_2)$ where $C_1$ has type $(1,2)$ and $C_2$ has type $(1,0)$;
\item[(b-xiii)]
$(\BF_\beta, C + \Delta_\infty)$ where $C$ and $\Delta_\infty$ are two sections
of $\BF_\beta/\PP^1$ satisfying $-\Delta_\infty^2 = C^2 - 4 = \beta \ge 2$.
\end{enumerate}
\end{thm}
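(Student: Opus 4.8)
The plan is to first reduce $(X,D)$ to a pair on a minimal rational surface and then enumerate the possibilities there. Throughout, $(X,D)$ is a Type~II log K3: $X$ is a smooth projective rational surface, $D$ is reduced and nc, $h^0(K_X+D)=1$, and $\kappa(X,D)=0$. First I would run a minimal model program on $X$, contracting $(-1)$-curves one at a time to obtain a birational morphism $f\colon X\to\overline X$ with $\overline X\cong\PP^2$, $\BF_0$, or $\BF_\beta$ with $\beta\ge 2$, chosen so that $\overline D:=f_*D$ stays nc. When a $(-1)$-curve $E$ with image point $p$ is contracted, put $m=\mathrm{mult}_p(\overline D)\in\{0,1,2\}$ and $\varepsilon=1$ if $E\subset D$ and $0$ otherwise; the adjunction identity $K_X+D=f^*(K_{\overline X}+\overline D)+(1-m+\varepsilon)E$ gives $h^0(K_X+D)=h^0(K_{\overline X}+\overline D)$ (in the case $m=0$ one uses that $E$ is contracted into the base locus), while $\kappa$ of the pair is a proper-birational invariant of the interior and hence is preserved. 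Thus $(\overline X,\overline D)$ is again a numerical log K3 with $\overline D$ nc.

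Next I would pin down the class of $\overline D$ on the minimal model. The divisor $K_{\overline X}+\overline D$ is effective with Iitaka dimension $0$, so $h^0(m(K_{\overline X}+\overline D))$ is bounded. On $\PP^2$ and $\BF_0$ every nonzero effective divisor has unbounded multiples (there are no irreducible curves of negative self-intersection), forcing $K_{\overline X}+\overline D=0$; hence $\overline D\in|{-}K_{\overline X}|$, a plane cubic on $\PP^2$ and a curve of class $(2,2)$ on $\BF_0$. On $\BF_\beta$ with $\beta\ge 2$ the only irreducible curve of negative self-intersection is the section $\Delta_\infty$ with $\Delta_\infty^2=-\beta$, so $K_{\overline X}+\overline D=c\,\Delta_\infty$ for some $c\ge 0$; since $\overline D$ is reduced this pins its class to $2\Delta_\infty+(\beta+2)F$ together with, only when $\beta=2$, the possibility of an extra $(-2)$-curve $\Delta_\infty$. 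In every case $\overline D$ is supported on a sub-configuration of $-K_{\overline X}$ plus at most one $\Delta_\infty$.

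Finally I would enumerate. For each allowed class, list the reduced nc members, organized by the number of components, their bidegrees/types, and the intersection pattern; the nc condition forbids triple points and tangencies, leaving finitely many combinatorial types. An irreducible smooth member is a smooth elliptic curve and an irreducible singular member is a nodal rational curve, giving the cases (a-i)--(a-iii) and (b-ix)--(b-xi); the reducible members are transverse unions of lines, fibers, sections and conics, and matching the bidegree bookkeeping on $\PP^2$, $\BF_0$, $\BF_\beta$ against the required value of $K_{\overline X}+\overline D$ produces exactly (b-i)--(b-viii), (b-xii), (b-xiii).

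The main obstacle is the first step: carrying out the descent to a minimal rational surface while keeping the boundary nc, i.e. arranging that no $(-1)$-curve one is forced to contract meets $D$ in three or more points or tangentially, and verifying at each stage that the numerical conditions persist. This is where the structure of a log K3 is genuinely used --- the identity $(K_X+D)\cdot E=-1+D\cdot E$ controls how a $(-1)$-curve can sit against $D$ --- and it is the geometric heart of the argument; by contrast the second step is a short linear-systems computation and the third is a finite case check. (Since this is a classification due to Iitaka and Zhang, in the text it is simply quoted from \cite{I}.)
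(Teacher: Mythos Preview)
The paper does not give a proof of this theorem; it is quoted from Iitaka's paper \cite{I} (specifically Theorem~3, Theorem~$\text{II}_\text{a}$, Table~$\text{II}_\text{a}$, Proposition~16 and Table~$\text{II}_\text{b}$), exactly as you acknowledge in your last sentence. So there is no ``paper's own proof'' to compare against.

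Your outline is the natural one and matches the shape of Iitaka's argument: descend to a minimal rational surface, identify the numerical class of $\overline D$ from $\kappa(K_{\overline X}+\overline D)=0$, and enumerate nc members. Steps~2 and~3 are fine as written; in particular your observation that on $\BF_\beta$ ($\beta\ge 2$) the only negative curve is $\Delta_\infty$, forcing $K_{\overline X}+\overline D=c\Delta_\infty$ with $c\in\{0,1\}$ and $c=1$ only when $\beta=2$, is correct and exactly recovers the ``extra $\Delta_\infty$'' cases (a-iii), (b-vi), (b-xi).

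The one place where your sketch is thinner than it looks is the point you yourself flag. The identity $(K_X+D)\cdot E=-1+D\cdot E$, combined with the effectivity of $K_X+D$, gives the \emph{lower} bound $D\cdot E\ge 1$ whenever $E$ is not contained in the unique member of $|K_X+D|$; it does not by itself give the \emph{upper} bound $D\cdot E\le 2$ (with transverse intersection) that you need to keep $f_*D$ normal crossing after contraction. One has to argue separately that at each stage some $(-1)$-curve is either a component of $D$ (in which case nc is automatically preserved) or meets $D$ in at most two transverse points; this is where Iitaka does real work, and your sketch would need to supply it. Apart from that caveat, your plan is correct.
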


We use the notations $\mathrm{II}_{\mathrm{a}\text{-}\bullet}$
and $\mathrm{II}_{\mathrm{b}\text{-}\bullet}$ to refer such $(\overline{X}, \overline{D})$.
For the last type $\mathrm{II}_{\mathrm{b}\text{-}\mathrm{xiii}}$,
we may contract $\Delta_\infty$ to obtain a log del Pezzo surface. Thus, we replace/expand this type by/to the following:
\begin{enumerate}
\item[(b-xiii)]
$\overline{X}$ is a log del Pezzo surface of Picard rank $1$, i.e., $\overline{X}$ is a projective surface with log terminal singularities, ample anti-canonical divisor $-K_{\overline{X}}$ and $\rank_\BZ\Pic(\overline{X}) = 1$,
$\overline{D}\sim -K_{\overline{X}}$ is a rational curve with one node $\overline{p}$ and $\overline{X}$ is singular at $\overline{p}$ and smooth outside of $\overline{p}$.
\end{enumerate}
Log del Pezzo surfaces of Picard rank $1$ have been extensively studied (cf. \cite{K-M}). In our case, log del Pezzo surfaces of Picard rank $1$ with a unique singularity were classified by H. Kojima \cite{K}. Although we do not need it here, one can use Kojima's classification to further divide $\mathrm{II}_{\mathrm{b}\text{-}\mathrm{xiii}}$ into subclasses.

We call $(\overline{X}, \overline{D})$ an {\em Iitaka model} of $(X,D)$.
Note that Iitaka model for a log K3 is not unique. For example,
let $X$ be the blowup of $\PP^2$ at two distinct points and let
$D = C_1 + C_2$, where $C_1\sim 2H$ and $C_2\sim H - E_1 - E_2$ with
$H$ the pullback of the hyperplane divisor and $E_i$ the exceptional divisors
of $X\to \PP^2$. We may let $f: X\to \overline{X}\cong \PP^2$
be the blowdown of $E_1$ and $E_2$,
which results in Iitaka model $\text{II}_\text{b-iv}$. Or we may let $f: X\to \overline{X} \cong \BF_0$ be the blowdown of $C_2$, which results in Iitaka model
$\text{II}_\text{b-x}$. Indeed, although we do not need this fact, it is easy to show that there exists a genuine log K3 whose Iitaka model
can be any type in $\text{II}_\text{b}$. 

Also note that although we have $K_X + D = f^* (K_{\overline{X}} + \overline{D})$,
$(\overline{X}, \overline{D})$ is not necessarily a log K3. That is,
$(\overline{X}, \overline{D})$ might be irregular:
\begin{equation}\label{LOGK3RATE102}
h^0(\Omega_{\overline{X}}(\log \overline{D})) = \dim_\BQ \ker(\oplus \BQ \overline{D}_i
\to H^2(\overline{X}, \BQ)) > 0
\end{equation}
for Iitaka types $\text{II}_\text{b-i}$-$\text{II}_\text{b-viii}$, where $\overline{D}_i$ are the irreducible components of $\overline{D}$.

We can reformulate our theorems using the language of Iitaka model:
there are infinitely many $\A^1$ curves in $X\backslash D$ if and only if $(X,D)$ has a log K3 Iitaka model, i.e.,
$\mathrm{II}_{\mathrm{b}\text{-}\mathrm{ix}}$-$\mathrm{II}_{\mathrm{b}\text{-}\mathrm{xiii}}$.

\begin{thm}\label{LOGK3RATTHMLOGISOMIITAKA}
For every genuine log K3 surface $(X,D)$ of type $\mathrm{II}$,
there exists a log isomorphism
$\xymatrix{(X, D) \ar@{-->}[r]^-{\sim} & (\widehat{X}, \widehat{D})}$ followed by a birational morphism $f: \widehat{X} \to\overline{X}$ with $\overline{D} = f_* \widehat{D}$ such that $(\widehat{X}, \widehat{D})$ is one of C0-C4 in Theorem \ref{LOGK3RATTHMLOGISOMK3} and
\begin{itemize}
\item
if $(\widehat{X}, \widehat{D})$ is C0,
$(\overline{X}, \overline{D})$ is one of $\mathrm{II}_{\mathrm{a}\text{-}\bullet}$;
\item
if $(\widehat{X}, \widehat{D})$ is C1,
$(\overline{X}, \overline{D})$ is one of $\mathrm{II}_{\mathrm{b}\text{-}\mathrm{ix}}$-$\mathrm{II}_{\mathrm{b}\text{-}\mathrm{xi}}$;
\item
if $(\widehat{X}, \widehat{D})$ is C2, $(\overline{X}, \overline{D})$ is one of $\mathrm{II}_{\mathrm{b}\text{-}\mathrm{ix}}$-$\mathrm{II}_{\mathrm{b}\text{-}\mathrm{xiii}}$;
\item
if $(\widehat{X}, \widehat{D})$ is C3,
$(\overline{X}, \overline{D})$ is $\mathrm{II}_{\mathrm{b}\text{-}\mathrm{xii}}$;
\item
if $(\widehat{X}, \widehat{D})$ is C4,
$(\overline{X}, \overline{D})$ is $\mathrm{II}_{\mathrm{b}\text{-}\mathrm{iv}}$-$\mathrm{II}_{\mathrm{b}\text{-}\mathrm{vi}}$.
\end{itemize}
\end{thm}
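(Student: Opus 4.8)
The plan is to combine Theorem~\ref{LOGK3RATTHMLOGISOMK3}, which reduces $(X,D)$ to one of the normal forms C0--C4, with Iitaka's Theorem~\ref{LOGK3RATTHMIITAKA}, which lists the minimal rational models. The strategy is to run, for each of C0--C4, the same kind of explicit sequence of canonical blowdowns (and, when helpful, pivots) that appears in \S\ref{LOGK3RATSEC003} and in the proof of Lemma~\ref{LOGK3RATLEMD12}, contracting the boundary down to a minimal rational surface and reading off which entry of Iitaka's list one lands on. Since every genuine log K3 of type II is log isomorphic to one of C0--C4 (Theorem~\ref{LOGK3RATTHMLOGISOMK3}), and the birational morphism to a minimal rational surface simply contracts $(-1)$-curves, the content is entirely in matching the five cases to the named Iitaka types; the equality $K_X+D = f^*(K_{\overline X}+\overline D)$ is automatic for a composition of blowdowns since $f^*K_{\overline X} + (\text{exceptional}) = K_X$ and $\overline D = f_*D$ absorbs the discrepancy.

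First I would handle C0: here $D=\widehat D$ is a smooth elliptic curve, so any run of $(-1)$-curve contractions keeps $\overline D$ an irreducible smooth anticanonical elliptic curve on a minimal rational surface $\overline X \in \{\PP^2,\BF_0,\BF_2\}$ (the only minimal rational surfaces carrying a smooth anticanonical member, since $\BF_1$ is not minimal and $\BF_\beta$ with $\beta\ge 3$ has $-K$ not nef), which is exactly the list $\mathrm{II}_{\mathrm a\text{-}\bullet}$. For C1, $\widehat D$ is a nodal rational anticanonical curve, and the same contraction argument lands on $(\PP^2,E)$, $(\BF_0,E)$ or $(\BF_2,E\,(+\Delta_\infty))$ with $E$ nodal, i.e.\ $\mathrm{II}_{\mathrm b\text{-ix}}$--$\mathrm{II}_{\mathrm b\text{-xi}}$. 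For C3, I would invoke the analysis already carried out in the proof of Lemma~\ref{LOGK3RATLEM002}: the pencil $|D_2|$ exhibits $\overline X \cong \BF_0$ or $\BF_1$, with $D_1$ of type $(1,2)$ (after pushing to $\BF_0$, since $\BF_1$ is not minimal one more contraction makes the $(1,2)$-curve meet a fiber), $D_2$ a fiber, which is precisely $\mathrm{II}_{\mathrm b\text{-xii}}$. For C4, the "only if" proof already notes $D=D_1+D_2$ with $D_1^2,D_2^2>0$; contracting $(-1)$-curves one checks the possible minimal models are $(\PP^2,H+C)$, $(\BF_0,C_1+C_2)$ with $C_i$ of type $(1,1)$, or $(\BF_2,\Delta_0+\Delta_\lambda\,(+\Delta_\infty))$, namely $\mathrm{II}_{\mathrm b\text{-iv}}$--$\mathrm{II}_{\mathrm b\text{-vi}}$, using the Hodge index constraint (as in the last paragraph of the proof of Theorem~\ref{LOGK3RATTHMLOGISOMK3}) to exclude everything else.

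The main obstacle is case C2, where $\widehat D = \widehat D_1+\cdots+\widehat D_n$ with $\widehat D_1^2\ne 0,-1$ and $\widehat D_i^2\le -2$ for $i\ne 1$, and the claim is that $\overline X$ can be taken to be any of $\mathrm{II}_{\mathrm b\text{-ix}}$--$\mathrm{II}_{\mathrm b\text{-xiii}}$; here one does not simply fall into a unique type, because Iitaka models are non-unique (as the excerpt stresses) and C2 is precisely the family of genuine log K3's with infinitely many $\A^1$ curves, matching the reformulation "$(X,D)$ has a log K3 Iitaka model". I would argue this in two directions: (i) contracting all of $\widehat D_2+\cdots+\widehat D_n$ yields a log del Pezzo surface $\overline X$ of Picard rank $1$ with $\overline D$ its (nodal, after the contraction glues the chain ends) anticanonical curve singular only at $\overline p$ — this is exactly the expanded type $\mathrm{II}_{\mathrm b\text{-xiii}}$; and (ii) whenever $\overline X$ happens to be smooth, one checks it must be a minimal rational surface carrying a reducible or nodal anticanonical divisor supported on a chain/loop, which forces it into $\mathrm{II}_{\mathrm b\text{-ix}}$--$\mathrm{II}_{\mathrm b\text{-xii}}$; running the pivot/blowdown sequences of Lemma~\ref{LOGK3RATLEMD12} and Proposition~\ref{LOGK3RATPROPRATLOGDELPEZ} shows every one of these is actually attainable. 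The delicate point is bookkeeping: verifying that the chain $\widehat D_2,\dots,\widehat D_n$ of curves with self-intersection $\le -2$ really does contract (its intersection matrix is negative definite by Hodge index applied inside the genuine log K3, as already used in Proposition~\ref{LOGK3RATPROPRATLOGDELPEZ}), and that the resulting surface singularity is log terminal — both of which are standard for chains of $(\le -2)$-curves but must be stated. I would then assemble the four bullet points of the theorem from these case analyses.
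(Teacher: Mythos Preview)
Your treatment of C0, C1, C4 matches the paper (simple blowdown to a minimal rational surface), and your C3 argument has the right idea, though the paper is more careful about forcing the ruled model to be $\BF_0$ rather than $\BF_1$: when the contraction of $(-1)$-curves in fibers lands on $\BF_1$, one either swaps which $(-1)$-curve is contracted in the last step or performs a pivot at $\widehat D_2$ to reach $\BF_0$, since $\mathrm{II}_{\mathrm{b}\text{-}\mathrm{xii}}$ lives only on $\BF_0$.

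The genuine gap is in C2. Your step (i) asserts that contracting $\widehat D_2+\cdots+\widehat D_n$ yields a log del Pezzo of Picard rank $1$, but the rank after this contraction is $\rank_\BZ\Pic(\widehat X)-(n-1)$, and nothing in the C2 normal form forces $\rank_\BZ\Pic(\widehat X)=n$. Neither Lemma~\ref{LOGK3RATLEMD12} nor Proposition~\ref{LOGK3RATPROPRATLOGDELPEZ} closes this: the former only arranges $D_1^2>0$ (or drops to C1), and the latter is a log isomorphism that does not decrease $\rank\Pic-\mu(D)$. Your step (ii) is also not well-posed, since contracting a chain of curves with self-intersections $\le -2$ always produces a singular point, so ``whenever $\overline X$ happens to be smooth'' never occurs along that route.

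The paper fills this gap with Lemma~\ref{LOGK3RATLEM003} and an induction on $\rank_\BZ\Pic(X)$. Assuming $D_1^2>0$, $D_i^2\le -2$ for $i\ne 1$, and $\rank_\BZ\Pic(X)>n$, the lemma produces a nontrivial birational morphism $X\to\overline X$ such that $(\overline X,\overline D)$ is of type C1, of type C3, or again has $\overline D-\overline D_1$ negative definite. The proof is a genuine analysis of a $\PP^1$-fibration on $X$: one shows $D_1$ is a section or degree-$2$ multisection, uses the count $\rank\Pic(X)-\mu(D)=(\mu(F_p)-\mu(F_p\cap D)-1)+(\mu(F_q)-\mu(F_q\cap D)-1)>0$ to locate an extra $(-1)$-curve (and possibly a $(-2)$-curve) in a bad fiber, and contracts carefully so the boundary keeps its negative-definite complement. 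Only when the induction terminates at $\rank_\BZ\Pic(X)=n$ does contracting $D_2+\cdots+D_n$ give the rank-$1$ log del Pezzo of type $\mathrm{II}_{\mathrm{b}\text{-}\mathrm{xiii}}$; the side exits to C1 and C3 during the induction are exactly why the C2 bullet allows all of $\mathrm{II}_{\mathrm{b}\text{-}\mathrm{ix}}$--$\mathrm{II}_{\mathrm{b}\text{-}\mathrm{xiii}}$.
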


On the other hand, we can prove

\begin{thm}\label{LOGK3RATTHMNOA1}
For each $(\overline{X}, \overline{D})$ among $\mathrm{II}_{\mathrm{b}\text{-}\mathrm{i}}$-$\mathrm{II}_{\mathrm{b}\text{-}\mathrm{viii}}$,
there exists a genuine log K3 surface $(X,D)$ and a birational morphism $g: X\to \overline{X}$ with
$\overline{D} = g_* D$ such that there are at most finitely many $\A^1$ curves in $X\backslash D$.
\end{thm}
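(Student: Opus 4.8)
The plan is to reduce \thmref{LOGK3RATTHMNOA1} to \thmref{LOGK3RATTHM001} by a case analysis over the eight Iitaka types. For each type b-i,\,\dots,\,b-viii I will exhibit a genuine log K3 surface $(X,D)$ together with a birational morphism $g\colon X\to\overline X$ with $g_*D=\overline D$ of the prescribed type, such that $(X,D)$ is log-isomorphic to a genuine log K3 of type C4. Granting this, we are done: a pair log-isomorphic to a type-C4 pair is not log-isomorphic to any of C0--C3 (by \thmref{LOGK3RATTHMLOGISOMK3}), hence by the ``only if'' half of \thmref{LOGK3RATTHM001} it does not carry countably (i.e.\ countably infinitely) many $\A^1$ curves; on the other hand $K_X+D=0$ is pseudo-effective, so $X\setminus D$ carries at most countably many $\A^1$ curves (cf.\ \lemref{LOGK3RATLEMRIGIDITY}, and the remark opening \secref{LOGK3RATSEC002}). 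The two statements together force only finitely many $\A^1$ curves in $X\setminus D$.

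All of the relevant $\overline D$ satisfy $\overline D\sim -K_{\overline X}$ (this is forced, since $K_{\overline X}+\overline D=0$ for any genuine log K3 mapping to $(\overline X,\overline D)$; in the one ``or'' case, b-vi, we keep only the anticanonical representative $(\BF_2,\Delta_0+\Delta_\lambda)$), so the recipe is uniform: blow up $\overline X$ at two smooth points of $\overline D$ in general position, obtaining $g\colon X\to\overline X$, and let $D$ be the proper transform of $\overline D$. Then $g$ contracts exactly the two exceptional $(-1)$-curves, which are disjoint from $D$, so $(\overline X,\overline D)$ is an Iitaka model of $(X,D)$; one has $K_X+D=g^*(K_{\overline X}+\overline D)=0$; and $q(X,D)=h^0(\Omega_X^1(\log D))=0$ is verified by checking that the classes of the components of $D$ are linearly independent in $H^2(X,\BQ)$, which holds because the two blown-up points impose independent conditions. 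For b-iv, b-v, b-vi (boundary $=$ line $+$ conic on $\PP^2$; two $(1,1)$-curves on $\BF_0$; $\Delta_0+\Delta_\lambda$ on $\BF_2$) one takes both points on the conic in the first case and one point on each component in the other two, so that $D$ is already a two-cycle with both self-intersections positive, i.e.\ of type C4 outright. For the remaining five types $D$ comes out as a cycle of length $3$ or $4$ containing a $0$-curve or a $(-1)$-curve, and one brings it to type C4 using precisely the pivot operations and canonical blowdowns from the proof of \thmref{LOGK3RATTHMLOGISOMK3}; for instance, for b-i (triangle $H_1+H_2+H_3\subset\PP^2$, one point blown up on $H_1$ and one on $H_2$) the circular boundary of type $(0,0,1)$ becomes $(0,-1,2)$ after a pivot and then $(1,3)$ after contracting the $(-1)$-curve; for b-vii ($\overline D=F+\Delta_\infty+C_3$ on $\BF_\beta$, two points blown up on $C_3$) the type $(0,-\beta,\beta)$ becomes $(0,-1,1)$ after $\beta-1$ pivots at the $0$-curve $F$ and then $(1,2)$; for b-iii ($\overline D=\Delta_\lambda+\Delta_\infty+F_1+F_2$ on $\BF_\beta$, one point on $\Delta_\lambda$ and one on $F_1$) the type $(\beta-1,-1,-\beta,0)$ becomes $(\beta,-\beta+1,0)$ after a contraction, then $(2,-1,0)$ after $\beta-2$ pivots, then $(3,1)$. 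Types b-ii and b-viii are handled the same way.

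The only genuine obstacle is bookkeeping: in each of the eight cases one must check both that $q(X,D)=0$, so that $(X,D)$ really is a genuine log K3 rather than merely an Iitaka surface, and that the prescribed sequence of pivots and contractions terminates at a two-cycle with both self-intersections strictly positive. The conceptual input --- that being log-isomorphic to type C4 already forbids infinitely many $\A^1$ curves --- is provided by \thmref{LOGK3RATTHM001}, ultimately by \lemref{LOGK3RATLEMB2} together with the observation (made in the proof of its ``only if'' part) that condition \eqref{LOGK3RATEB2} fails for the blow-up of a type-C4 pair at its two nodes.
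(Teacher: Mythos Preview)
Your approach is essentially the paper's: blow up $(\overline X,\overline D)$ at smooth points of $\overline D$ so that $q(X,D)=0$, and then argue that the resulting genuine log K3 cannot carry infinitely many $\A^1$ curves because, up to a log isomorphism, condition \eqref{LOGK3RATEB2} of \lemref{LOGK3RATLEMB2} fails. The only cosmetic differences are that the paper blows up the minimal number $r=q(\overline X,\overline D)$ of points (so only one for types b-iv through b-viii) rather than always two, and in most cases checks the failure of \eqref{LOGK3RATEB2} directly on $(X,D)$ rather than first reducing by pivots to a two-cycle of type C4.

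One small correction: \thmref{LOGK3RATTHMLOGISOMK3} does not assert that the types C0--C4 are mutually exclusive under log isomorphism, so your first-paragraph appeal to it for that purpose is misplaced. The argument is rescued by your final paragraph, where you correctly identify the real input: \lemref{LOGK3RATLEMB2} applied to the blow-up of a C4 pair at its two nodes already shows that no pair log-isomorphic to C4 can have infinitely many $\A^1$ curves, and this is all you need.
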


The proof of Theorem \ref{LOGK3RATTHMNOA1} gives many examples of genuine log K3 surfaces without infinitely many $\A^1$ curves.

\subsection{Proof of Theorem \ref{LOGK3RATTHMLOGISOMIITAKA}}

If $(\widehat{X}, \widehat{D})$ is of type C0, C1 or C4, 
we simply let $f$ be the blowdown of $X$ to a minimal rational surface $\overline{X}$.

Suppose that $(\widehat{X}, \widehat{D})$ is of type C3. Let $\pi: \widehat{X}\to \PP^1$ be the fiberation given by $|\widehat{D}_2|$. There exists a sequence of blowdowns of $(-1)$-curves contained in the fibers of $\pi$:
\begin{equation}\label{LOGK3RATE113}
\widehat{X} = \widehat{X}_n \xrightarrow{f_n} \widehat{X}_{n-1} \xrightarrow{f_{n-1}}
... \xrightarrow{f_2} \widehat{X}_1 \xrightarrow{f_1} \widehat{X}_0 = \overline{X}
\end{equation}
such that $\pi$ factors through $f = f_1\circ f_2 \circ ... \circ f_n$ and $\overline{X}$ is a rational ruled surface with $\overline{\pi} = \pi\circ f^{-1}: \overline{X}\to \PP^1$. Since $\overline{D} = \overline{D}_1 + \overline{D}_2$ with $\overline{D}_2^2 = 0$, we see that $\overline{X}$ must be either $\BF_0 = \PP^1\times \PP^1$ or $\BF_1$. If it is the former, we are done.

Suppose that $\overline{X} \cong \BF_1$. If $\widehat{X} \not\cong \BF_1$, then
$n\ge 1$ in \eqref{LOGK3RATE113} and let $E_1$ be the exceptional curve of
$f_1$; clearly, there exists another $(-1)$-curve $E_2$ such that
$E_1 + E_2$ is a fiber of $\overline{\pi}\circ f_1: \widehat{X}_1\to \PP^1$ and
the blowdown $f_1': \widehat{X}_1\to \overline{X}'$ of $E_2$ results in
$\overline{X}'\cong \BF_0$. Replacing $(\overline{X}, \overline{D})$ by $(\overline{X}', \overline{D}')$, we are done. If $\widehat{X} \cong \BF_1$, then 
a pivot operation at $\widehat{D}_2$ gives a log isomorphism
$g: (\widehat{X}, \widehat{D})\dashrightarrow (\widehat{X}', \widehat{D}')$
with $\widehat{X}' \cong \BF_0$. Replacing $(\widehat{X}, \widehat{D})$ by $(\widehat{X}', \widehat{D}')$, we are done.

It remains to treat $(\widehat{X}, \widehat{D})$ of type C2. To simplify our notations, we let $(X,D) = (\widehat{X}, \widehat{D})$. We need two lemmas.

\begin{lem}\label{LOGK3RATLEM001}
Let $(X,D)$ be a log surface with $X$ a smooth projective rational surface, $D = D_1 + D_2 + ... + D_n$ a circular boundary
and $K_X + D = 0$.
If there are $n-1$ components $D_2, D_3,..., D_n$ of $D$ such that the intersection matrix of $\{D_2, D_3,...,D_n\}$ is negative definite, then
$(X,D)$ is a genuine log K3.
\end{lem}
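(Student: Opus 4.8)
The claim is that a circular-boundary log surface $(X,D)$ on a smooth projective rational surface with $K_X+D=0$ and with $n-1$ of the boundary components spanning a negative-definite intersection lattice is automatically a genuine log K3, i.e., we must verify the two defining conditions: $K_X+D=0$ in $\Pic(X)$ (which is given) and $q(X,D)=h^0(\Omega^1_X(\log D))=0$. So the entire content is the vanishing of the log irregularity. The plan is to compute $h^0(\Omega^1_X(\log D))$ via the residue exact sequence
\begin{equation*}
0 \longrightarrow \Omega^1_X \longrightarrow \Omega^1_X(\log D) \xrightarrow{\ \mathrm{res}\ } \bigoplus_{i=1}^n \CO_{D_i} \longrightarrow 0 ,
\end{equation*}
and then take cohomology. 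Since $X$ is a smooth projective rational surface, $H^0(\Omega^1_X)=H^1(\CO_X)=0$ and $\chi(\CO_X)=1$; in particular $q(X)=0$. Taking global sections, $h^0(\Omega^1_X(\log D))$ injects into $H^0(\bigoplus\CO_{D_i})$ and its image is the kernel of the connecting map $\bigoplus_i H^0(\CO_{D_i})\to H^1(\Omega^1_X)$. Because each $D_i$ is a smooth rational curve, $H^0(\CO_{D_i})=\BC$, so $\bigoplus_i H^0(\CO_{D_i})\cong\BC^n$, and the connecting homomorphism sends the $i$-th generator to the class $[D_i]\in H^1(\Omega^1_X)\hookrightarrow H^2(X,\BC)$ (up to the usual sign/constant). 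Hence $h^0(\Omega^1_X(\log D))=\dim_\BC\ker\bigl(\BC^n\to H^2(X,\BQ),\ e_i\mapsto [D_i]\bigr)$, exactly the formula quoted in \eqref{LOGK3RATE102}.

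So it remains to show the classes $[D_1],\dots,[D_n]$ are linearly independent in $H^2(X,\BQ)=\NM(X)_\BQ$. Here is where the two hypotheses enter. Suppose $\sum_i a_i[D_i]=0$ with $a_i\in\BQ$ not all zero. Since $K_X+D=0$, we have $[D]=-K_X$, so $\sum_i [D_i]=-K_X$; I would intersect the relation $\sum a_i D_i \equiv 0$ with $D_j$ for each $j$ and also with $-K_X = D$. Intersecting with $D$ and using $D_i\cdot D = D_i^2 + D_i(D-D_i) = D_i^2 + 2$ (from \eqref{LOGK3RATE103}), together with the circular intersection pattern, gives one linear constraint; intersecting with each $D_j$ ($j\ge 2$) and invoking negative-definiteness of the sublattice spanned by $D_2,\dots,D_n$ is the key step. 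Concretely: write the relation as $a_1 D_1 \equiv -\sum_{i\ge 2} a_i D_i$. Intersect both sides with $D_j$ for $j\ge 2$: the left side is $a_1 (D_1\cdot D_j)$, which is nonzero only for $j=2$ and $j=n$ (the two neighbors of $D_1$ in the circle). Then intersecting the numerically trivial class $\sum_i a_i D_i$ with itself gives $0 = \bigl(\sum_i a_i D_i\bigr)^2$; expanding and isolating, one shows that after subtracting off the $D_1$-contribution the remaining quadratic form on $a_2,\dots,a_n$ against the negative-definite matrix forces those $a_i$ to be determined by $a_1$, and then a final consistency check (using $D_1^2\ge -2$ is \emph{not} assumed here, only the circularity relations) forces $a_1=0$ as well. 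Alternatively and more cleanly: the sublattice $\langle D_2,\dots,D_n\rangle$ is negative definite hence of rank $n-1$; if the full set $\{D_1,\dots,D_n\}$ were dependent, then $D_1 \equiv \sum_{i\ge 2} c_i D_i$ for unique rationals $c_i$, so $D_1$ lies in the negative-definite sublattice, giving $D_1^2 < 0$ and more importantly $D_1\cdot D_1 = \sum c_i (D_1\cdot D_i) = c_2(D_1 D_2) + c_n(D_1 D_n)$; combined with $D_1\cdot D_j = \sum_i c_i (D_i\cdot D_j)$ for all $j$, this says the vector $(c_2,\dots,c_n)$ together with the intersection data is over-determined, and a short computation with the negative-definite Gram matrix $M$ yields $D_1^2 = (D_1\cdot D_2, 0,\dots,0,D_1\cdot D_n)\, M^{-1}\, (D_1\cdot D_2,0,\dots,0,D_1\cdot D_n)^{T} \le 0$, while $\sum_i D_i = -K_X$ forces $D_1\cdot(-K_X) = D_1^2 + 2$, and comparing with $D_1\cdot \sum_{i\ge2} c_i D_i$-expansions produces a contradiction. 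I'd present whichever of these bookkeeping arguments is shortest.

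The main obstacle is purely this last linear-algebra step: extracting a genuine contradiction from "$D_1$ lies in the negative-definite span of $D_2,\dots,D_n$" using only the circular relations \eqref{LOGK3RATE103} and $K_X+D=0$, without accidentally assuming $D_1^2\ge 0$ (which is \emph{not} hypothesized in this lemma — that extra positivity is imposed separately in Proposition \ref{LOGK3RATPROPRATLOGDELPEZ}). The cleanest route is likely: negative-definiteness of $\langle D_2,\dots,D_n\rangle$ gives $\det M \neq 0$, so dependence of all $n$ classes forces $D_1$ into that span; then $0 > D_1\cdot(a_1 D_1) = -D_1\cdot\sum_{i\ge 2}a_i D_i$ can be evaluated two ways and pinned against $D_1\cdot D = D_1^2+2$ via $D=-K_X$, and one checks the resulting scalar equation has no solution with $a_1\neq 0$. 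Once linear independence of $[D_1],\dots,[D_n]$ is in hand, the residue-sequence computation above gives $h^0(\Omega^1_X(\log D))=0$, and together with the hypothesis $K_X+D=0$ this is exactly the definition of a genuine log K3, completing the proof.
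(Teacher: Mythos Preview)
Your reduction via the residue sequence to the linear independence of $[D_1],\dots,[D_n]$ in $H^2(X,\BQ)$ is exactly right, and this is also what the paper does implicitly. The gap is in the second half: none of your three sketched routes to a contradiction from ``$D_1\equiv\sum_{i\ge 2}c_iD_i$'' actually closes. The equations you write down by intersecting with $D_j$ and with $D=-K_X$ simply recover the defining relations for the $c_i$; they are consistent and do not over-determine anything. Likewise, the observation $D_1^2=v^TM^{-1}v<0$ is correct but is not a contradiction, since nothing in the hypotheses bounds $D_1^2$ from below. Your instinct that $K_X+D=0$ and the circular combinatorics should enter is a red herring here; the paper's argument uses neither.

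What you are missing is a single effectivity observation. Write $D_1\equiv\sum_{i\ge 2}a_iD_i$. If every $a_i\le 0$, then $D_1+\sum_{i\ge 2}(-a_i)D_i$ is a nonzero effective $\BQ$-divisor that is numerically trivial, impossible on a projective surface. Hence some $a_i>0$. Set $P=\sum_{a_i>0}a_iD_i$ and $N=\sum_{a_j<0}(-a_j)D_j$, so $D_1\equiv P-N$ with $P\ne 0$ supported on $\{D_2,\dots,D_n\}$. Then
\[
D_1\cdot P \;=\; P^2 - N\cdot P \;<\;0,
\]
since $P^2<0$ by negative definiteness and $N\cdot P\ge 0$ (the supports of $N$ and $P$ are disjoint among the $D_i$, and distinct irreducible curves intersect nonnegatively). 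On the other hand $D_1\cdot P=\sum_{a_i>0}a_i(D_1\cdot D_i)\ge 0$ for the same reason. This is the contradiction. Note that only effectivity of the $D_i$ and negative definiteness of the sublattice are used; the circular relations \eqref{LOGK3RATE103} and $K_X+D=0$ play no role in the independence step.
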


\begin{proof}
If $(X,D)$ is not a genuine log K3, then $D_1 = a_2 D_2 + a_3 D_3 + ... + a_n D_n$
for some $a_i \in \BQ$. At least one of $a_i$'s is positive since $D_1, D_2, ..., D_n$ are effective. It follows that
\begin{equation}\label{LOGK3RATE135}
\begin{aligned}
0 &> \left(\sum_{a_i>0} a_i D_i\right)^2 + \left(\sum_{a_i>0} a_i D_i\right) \left(\sum_{a_j < 0} a_j D_j\right)\\
&= D_1 \left(\sum_{a_i>0} a_i D_i\right) \ge 0
\end{aligned}
\end{equation}
since the intersection matrix of $\{D_2, D_3,...,D_n\}$ is negative definite.
Contradiction. Therefore, $(X,D)$ is a genuine log K3.
\end{proof}

\begin{lem}\label{LOGK3RATLEM003}
Let $(X,D)$ be a genuine log K3 surface with circular boundary $D = D_1 + D_2 + ... + D_n$ satisfying that
$D_1^2 > 0$ and $D_i^2 \le -2$ for $i\ne 1$.
If $\rank_\BZ \Pic(X) > n$, then
there exists a nontrivial birational morphism $f: X\to \overline{X}$
with $\overline{D} = f_* D$ such that
$(\overline{X},\overline{D})$ is a genuine log K3 surface of type C1, C3 or with the property that
\begin{equation}\label{LOGK3RATE1B2}
\begin{aligned}
&\text{there exists an irreducible component } \overline{D}_i\subset \overline{D}\\
&\quad\text{such that }\overline{D} - \overline{D}_i \text{ has negative definite intersection matrix}.
\end{aligned}
\end{equation}
\end{lem}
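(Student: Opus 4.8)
The plan is to realize $f$ as a composition of contractions of $(-1)$-curves, the first of which is forced to exist by the hypothesis $\rank_\BZ\Pic(X)>n$.

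Since $D$ is a circular boundary, $n\ge 2$, so $\rank_\BZ\Pic(X)\ge n+1\ge 3$; as a minimal rational surface has Picard rank at most $2$, the surface $X$ is not minimal and hence contains a $(-1)$-curve $E$. Because $D_1^2>0$ and $D_i^2\le-2$ for $i\ne1$, no component of $D$ is a $(-1)$-curve, so $E$ is not a component of $D$. The numerical fact driving everything is that $-K_X=D$, so by adjunction $D\cdot E=-K_X\cdot E=1$; hence $E$ meets $D$ transversally at a single point, lying on exactly one component $D_j$. More generally, any $(-1)$-curve $C$ that is a component of a circular boundary $D'$ satisfies $C\cdot D'=C^2+C(D'-C)=-1+2=1$ by \eqref{LOGK3RATE103}. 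For the contraction $g$ of any $(-1)$-curve $C$ (in the boundary or not) with $C\cdot D'=1$, one has $g^{*}g_{*}D'=D'+(C\cdot D')C=D'+C$, so $g^{*}(K_{\overline X}+g_{*}D')=(K_X-C)+(D'+C)=K_X+D'$; thus if $K_X+D'=0$ then $K_{\overline X}+g_{*}D'=0$, and the anticanonical property of the boundary is preserved through all the contractions below.

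Suppose first that $j=1$. Then I would simply contract $E$; the resulting $(\overline X,\overline D)$ is a circular boundary of the same length with $\overline D_1^2=D_1^2+1>0$, $\overline D_i^2=D_i^2\le-2$ for $i\ne1$, and $K_{\overline X}+\overline D=0$. Now $\overline D-\overline D_1$ is a chain of curves whose self-intersections are all $\le-2$, so its intersection matrix is negative definite (these are precisely the intersection matrices of Hirzebruch--Jung resolutions of cyclic quotient singularities; equivalently, the negative of such a matrix is irreducibly diagonally dominant with positive diagonal). By Lemma~\ref{LOGK3RATLEM001}, $(\overline X,\overline D)$ is a genuine log K3 surface, and \eqref{LOGK3RATE1B2} holds with the distinguished component taken to be $\overline D_1$. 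Suppose instead that $j\ne1$, say $j=2$ after relabelling. I would contract $E$ and then run a \emph{cascade}: the proper transform of $D_2$ now has self-intersection $D_2^2+1$; if this is $\le-2$ I stop, otherwise $D_2^2=-2$, the proper transform of $D_2$ is a $(-1)$-curve lying inside the (still circular) boundary, and I contract it — this raises by $1$ the self-intersections of its two neighbours (one of them being $D_1$, directly or after several such steps, so $D_1^2$ only increases and stays positive) and shortens the cycle by one. Repeating, I contract $D_2,D_3,\dots$ in turn for as long as each successive proper transform becomes a $(-1)$-curve. Since $\mu(\overline D)$ drops at every boundary contraction, the cascade stops after at most $n$ steps, in one of two states: either (i) $\mu(\overline D)\ge2$ with $\overline D_1^2>0$ and every other component of self-intersection $\le-2$, so \eqref{LOGK3RATE1B2} again holds (with distinguished component $\overline D_1$) by the diagonal-dominance remark and Lemma~\ref{LOGK3RATLEM001}; or (ii) $\mu(\overline D)=1$, in which case $\overline D$ is an anticanonical curve of arithmetic genus $1$ obtained from a smooth rational curve by transversally identifying two points, i.e.\ a nodal rational curve, so $(\overline X,\overline D)$ is of type C1 (genuine since $\overline X$ is rational). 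In every case $f$ is a nontrivial birational morphism with the desired target.

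The point that needs genuine care is the bookkeeping inside the cascade: I must check that at each stage only the component just ``hit'' can become a $(-1)$-curve while $D_1$ never does, that log-smoothness and the circular (and finally nodal) shape of the boundary persist, and --- to make the dichotomy (i)/(ii) exhaustive --- that once the cycle has shrunk to two components the surviving component other than $D_1$ has self-intersection $\le-1$, hence is either contracted (giving (ii)) or already has self-intersection $\le-2$ (giving (i)). None of this is deep, but it is exactly what makes the disjunction in the statement work; note also that different choices of which $(-1)$-curves to contract can, when $n=2$, produce a model of type C3 instead, which is why C3 appears among the allowed outcomes.
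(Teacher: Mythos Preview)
Your cascade argument has a genuine gap in the relabelling step. You write ``say $j=2$ after relabelling,'' but in a circular boundary with the positive component $D_1$ fixed, the only relabelling freedom is the reflection $D_k\leftrightarrow D_{n+2-k}$; you can rename $D_j$ as $D_2$ only when $j\in\{2,n\}$, i.e.\ when $E$ meets a \emph{neighbour} of $D_1$. Nothing in your argument forces this. If $E$ meets an interior component $D_j$ with $2<j<n$, the cascade becomes two-sided: after contracting $E$ and then $D_j$, \emph{both} $D_{j-1}$ and $D_{j+1}$ gain $+1$, and whichever one you contract next bumps the other by a further $+1$. For instance with $n=5$, all $D_i^2=-2$ for $i\ne 1$, and $E\cdot D_3=1$, contracting $E$, then $D_3$, then $D_2$ leaves $D_4^2=0$; the resulting three-component cycle $(\overline D_1,\overline D_4,\overline D_5)$ with self-intersections $(>0,0,-2)$ is neither C1 nor C3, and removing any one component leaves a non-negative-definite matrix, so \eqref{LOGK3RATE1B2} fails. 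Stopping the cascade earlier does not help either: after contracting only $E$, the chain $\overline D-\overline D_1$ has type $(-2,-1,-2,-2)$, whose leading $3\times 3$ principal minor vanishes, so it is not negative definite and \eqref{LOGK3RATE1B2} still fails. Thus your dichotomy (i)/(ii) is not exhaustive.

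The paper confronts exactly this obstruction: it first disposes of the easy case where some $(-1)$-curve meets $D_1$ (your $j=1$ case), and then, under the standing assumption $D_1\cdot E=0$ for every $(-1)$-curve $E$, abandons the cascade entirely in favour of a fibration $g:X\to\PP^1$. The positivity $D_1^2>0$ forces $D_1$ to be a section or a bisection, and in each case the structure of the two special fibres containing $D-D_1$ (together with the Picard-rank hypothesis, which yields \eqref{LOGK3RATE128}) lets one locate a specific $(-1)$-curve whose contraction, followed by canonical blowdowns inside one fibre, lands in C1, C3, or a model satisfying \eqref{LOGK3RATE1B2}. Your cascade idea is natural, but to make it work in the interior case you would essentially have to recover this fibration picture.
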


\begin{proof}
Note that the number $\rank_\BZ \Pic(X) - \mu(D)$ remains the same after 
a canonical blowdown and decrease by one after a contraction of a $(-1)$-curve not contained in $D$. Suppose that $D_i$'s satisfy \eqref{LOGK3RATE103}.

If there is a $(-1)$-curve $E$ meets $D_1$, we may simply blow down $E$ and 
the resulting $(\overline{X},\overline{D})$ obviously satisfies \eqref{LOGK3RATE1B2}. Let us assume that 
\begin{equation}\label{LOGK3RATE500}
D_1 E = 0 \text{ for all $(-1)$-curves } E.
\end{equation}

Clearly, $X\not\cong \PP^2$. So there exists a fiberation $g: X\to \PP^1$ whose general fibers are $\PP^1$. Since $D_1^2 > 0$, $\pi_* D_1 \ne 0$.
If $g$ has a reducible fiber $F_r$ meeting $D$ properly, then $F_r$ has a component $E$ such that $D_1 E > 0$; since
$E^2 < 0$ and $KE < 0$, $E$ must be a $(-1)$-curve. This is impossible by \eqref{LOGK3RATE500}. So
\begin{equation}\label{LOGK3RATE501}
F_r\cong \PP^1 \text{ for all fibers } F_r \text{ of } g \text{ satisfying }\dim (F_r\cap D) = 0.
\end{equation}
Obviously,
$D_1$ is either a section or a multi-section of degree $2$ of $g$.
If it is the latter, then $D_2 + D_3 + ... + D_n$ is contained in a fiber of $g$.
Suppose that $g$ factors through a ruled surface $\overline{X}$. Then
$g_* D = \overline{D}$ is either a nodal rational curve or
has two components $\overline{D} = \overline{D}_1 + \overline{D}_2$ with
$\overline{D}_2^2 = 0$. Namely, $(\overline{X},\overline{D})$ is a genuine log K3 surface of type C1 or C3.

Let us assume that $D_1$ is a section of $g$. Then there is another component $D_i$ that is a section of $g$ and the rest $D - D_1 - D_i$ are contained in the two fibers
$F_p$ and $F_q$ of $g$ over $p\ne q\in \PP^1$. By \eqref{LOGK3RATE501},
$F_r\cong \PP^1$ for all $F_r\ne F_p, F_q$. Therefore, we have
\begin{equation}\label{LOGK3RATE129}
\begin{aligned}
&\quad \rank_\BZ \Pic(X) - \mu(D) = \mu(F_p) + \mu(F_q) - \mu(D)\\ 
&= (\mu(F_p) - \mu(F_p\cap D) - 1) + (\mu(F_q) - \mu(F_q\cap D) - 1) > 0.
\end{aligned}
\end{equation}
Consequently, either $\mu(F_p) \ge \mu(F_p\cap D) + 2$
or $\mu(F_q) \ge \mu(F_q\cap D) + 2$. WLOG, suppose that
\begin{equation}\label{LOGK3RATE128}
\mu(F_p) \ge \mu(F_p\cap D) + 2.
\end{equation}
It follows that $F_p$ contains
\begin{itemize}
	\item either one $(-1)$-curve $E_1$ and one $(-2)$-curve $E_2$ with the properties
	$E_2\cap D = \emptyset$ and $E_1 E_2 = 1$
	\item or two disjoint $(-1)$-curves $E_1$ and $E_2$.
\end{itemize}

Let $\phi: X\to \overline{X}$ be the contraction of $E_1$ followed by a sequence of blowdowns of $(-1)$-curves contained in $F_p\cap D$
such that $\overline{F}_p\cap \overline{D}$ does not contain any $(-1)$-curves
for $\overline{D} = \phi_* D$ and $\overline{F}_p = \phi_* F_p$. That is, $\phi$ is a birational morphism with the commutative diagram
\begin{equation}\label{LOGK3RATE130}
\xymatrix{
	X \ar[r]^-\phi\ar[d]_-g & \overline{X} \ar[dl]\\
	\PP^1
}
\end{equation}
such that $\overline{X}$ smooth, $\overline{F}_p\cap \overline{D}$ does not contain any $(-1)$-curves and the exceptional locus $E_\phi$ of $\phi$ satisfies $E_1\subset E_\phi \subset E_1\cup D$.

Suppose that $E_2^2 = -2$. WLOG, suppose that $E_1 D_j = 1$ for some $j > i$.
Since $(\phi_* E_2)^2 \le 0$, $E_\phi$ consists of at most
two components. If $E_\phi = E_1$, then all components of $\overline{D}$ other than $\overline{D}_1 = \phi_* D_1$ still have self-intersections $\le -2$ and hence
$(\overline{X}, \overline{D})$ satisfies \eqref{LOGK3RATE1B2}. If $E_\phi = E_1 + D_j$ has two components,
then $(\phi_* E_2)^2 = 0$ and we must have $\phi(E_2) = \overline{F}_p$. That is, $\phi$ contracts all components $F_p\cap D$. So $\overline{D}_1\cap \overline{D}_i\cap \overline{F}_p \ne \emptyset$
for $\overline{D}_i = \phi_* D_i$.
And since $E_\phi\cap D$ has one component, $\overline{D}_i^2 = D_i^2 + 1 \le -1$. On the other hand, all components of $\overline{D}$ other than $\overline{D}_1$ and $\overline{D}_i$ still have self-intersections $\le -2$. 
That is, $\overline{D}_i$ is a circular boundary of type
$(\lambda_1, \lambda_2, ..., \lambda_{i-1}, \overline{\lambda}_i)$
with $\lambda_k = \overline{D}_k^2 = D_k^2 \le -2$ for $2\le k < i$ and
$\overline{\lambda}_i = \overline{D}_i^2 \le -1$.
Therefore, the components of $\overline{D} - \overline{D}_1$ still have negative definite intersection matrix. So $(\overline{X}, \overline{D})$ satisfies  \eqref{LOGK3RATE1B2} again.

Suppose that $E_2^2 = -1$. WLOG, suppose that
$E_1 D_{j_1} = 1$ and $E_2 D_{j_2} = 1$ for some $j_1 \ge j_2 > i$.
If $E_\phi \cap E_2 = \emptyset$, then $\overline{D}_i^2 = D_i^2 \le -2$ and all components of $\overline{D}$ other than $\overline{D}_1$ still have self-intersections $\le -2$ and hence
$(\overline{X}, \overline{D})$ satisfies \eqref{LOGK3RATE1B2}.
If $E_\phi \cap E_2 \ne \emptyset$, then $\phi_* E_2 = \overline{F}_p$ and $\phi$ contracts all components $F_p\cap D$. So $\overline{D}_1\cap \overline{D}_i\cap \overline{F}_p \ne \emptyset$.
And since $j_1\ge j_2>i$, $\overline{D}_i^2 = D_i^2 + 1 \le -1$. On the other hand, all components of $\overline{D}$ other than $\overline{D}_1$ and $\overline{D}_i$ still have self-intersections $\le -2$. 
That is, $\overline{D}_i$ is a circular boundary of type
$(\lambda_1, \lambda_2, ..., \lambda_{i-1}, \overline{\lambda}_i)$
with $\lambda_k = \overline{D}_k^2 = D_k^2 \le -2$ for $2\le k < i$ and
$\overline{\lambda}_i = \overline{D}_i^2 \le -1$.
Therefore, the components of $\overline{D} - \overline{D}_1$ still have negative definite intersection matrix. So $(\overline{X}, \overline{D})$ satisfies \eqref{LOGK3RATE1B2} again.
\end{proof}

Now we can complete the proof of Theorem \ref{LOGK3RATTHMLOGISOMIITAKA}.

Suppose that $D = D_1 + D_2 + ... + D_n$ is a circular boundary of type $(\lambda_1, \lambda_2, ..., \lambda_n)$ with $D_i^2 = \lambda_i \le -2$ for all $i\ne 1$.
We argue by induction on $\rank_\BZ \Pic(X)$.

By Lemma \ref{LOGK3RATLEMD12}, we can reduce it to the case
$D_1^2 > 0$. If $\rank_\BZ \Pic(X) > n$, we may apply Lemma
\ref{LOGK3RATLEM003} to reduce $\rank_\BZ \Pic(X)$ by $1$.
If $\rank_\BZ \Pic(X) = n$, we may contract
the rod $D_2 + D_3 + ... + D_n$ to obtain a log del Pezzo surface $\overline{X}$ of Picard rank $1$. Indeed, we can be more precise about the singularity $\overline{p}$:
$\overline{X}$ has a cyclic quotient singularity at $\overline{p}$
given by $\BC^2/(\exp(2\pi i/a), \exp(2\pi i b/a))$, where
\begin{equation}\label{LOGK3RATE131}
\frac{a}{b} = -\lambda_2 + \frac{1}{\lambda_3 + 
\displaystyle{\frac{1}{\lambda_4 + ...}}}.
\end{equation}

\subsection{Proof of Theorem \ref{LOGK3RATTHMNOA1}}
	
It suffices to blow up each Iitaka model $\overline{X}$ at some smooth points of $\overline{D}$, called {\em half point attachments} by Iitaka, such that the resulting $(X, D)$ is a log K3, i.e., $h^0(\Omega_X(\log D)) = 0$,
and $(X,D)$ fails \eqref{LOGK3RATEB2} (see Figure \ref{LOGK3RATFIGB2B4000}). If $h^0(\Omega_{\overline{X}}(\log \overline{D})) = r$, we need to blow up $r$ points, i.e., attach $r$ half points.

For $\text{II}_\text{b-i}$, it suffices to blow up $\overline{X}\cong \PP^2$ at
one point on $H_1$ and one point on $H_2$.
Then $(X,D)$ fails \eqref{LOGK3RATEB2} since all three components of $D$ have self-intersections $\ge 0$.

For $\text{II}_\text{b-ii}$, it suffices to blow up $\overline{X}\cong \BF_0$ at
one point on $H_1$ and one point on $G_1$.
Then $(X,D)$ fails \eqref{LOGK3RATEB2} since all four components of $D$ have self-intersections $\ge -1$.

For $\text{II}_{\text{b-iii}}$,
it suffices to blow up $\overline{X}\cong \BF_\beta$ at one point on $F_1$ and one point on $\Delta_\lambda$.
After a sequence of pivot operations at $F_2$, we arrive at a log isomorphism
$(X,D)\dashrightarrow (\widehat{X}, \widehat{D})$, where $\widehat{D}$ has four components with self-intersections $-1, -1, 0, 0$, respectively, and \eqref{LOGK3RATEB2} fails.

For $\text{II}_\text{b-iv}$, it suffices to blow up $\overline{X}\cong \PP^2$ at
one point on $C$. Then $(X,D)$ fails \eqref{LOGK3RATEB2} since both components of $D$ have self-intersections $\ge 1$ (see Figure \ref{LOGK3RATFIGB2B3000}).

For $\text{II}_\text{b-v}$, it suffices to blow up $\overline{X}\cong \BF_0$ at
one point on $C_1$. Then $(X,D)$ fails \eqref{LOGK3RATEB2} since both components of $D$ have self-intersections $\ge 1$.

For $\text{II}_\text{b-vi}$, it suffices to blow up $\overline{X}\cong \BF_2$ at
one point on $\Delta_0$. Then $(X,D)$ fails \eqref{LOGK3RATEB2} since both components of $D$ have self-intersections $\ge 1$.

For $\text{II}_\text{b-vii}$, it suffices to blow up $\overline{X}\cong \BF_\beta$ at
one point on $C_3$. After a sequence of pivot operations at $F$, we arrive at a log isomorphism
$(X,D)\dashrightarrow (\widehat{X}, \widehat{D})$, where $\widehat{D}$ has three components with self-intersections $0, 0, 1$, respectively, and \eqref{LOGK3RATEB2} fails.

For $\text{II}_\text{b-viii}$, it suffices to blow up $\overline{X}\cong \BF_0$ at one point on $C$.
Then $(X,D)$ fails \eqref{LOGK3RATEB2} since all three components of $D$ have self-intersections $\ge 0$.

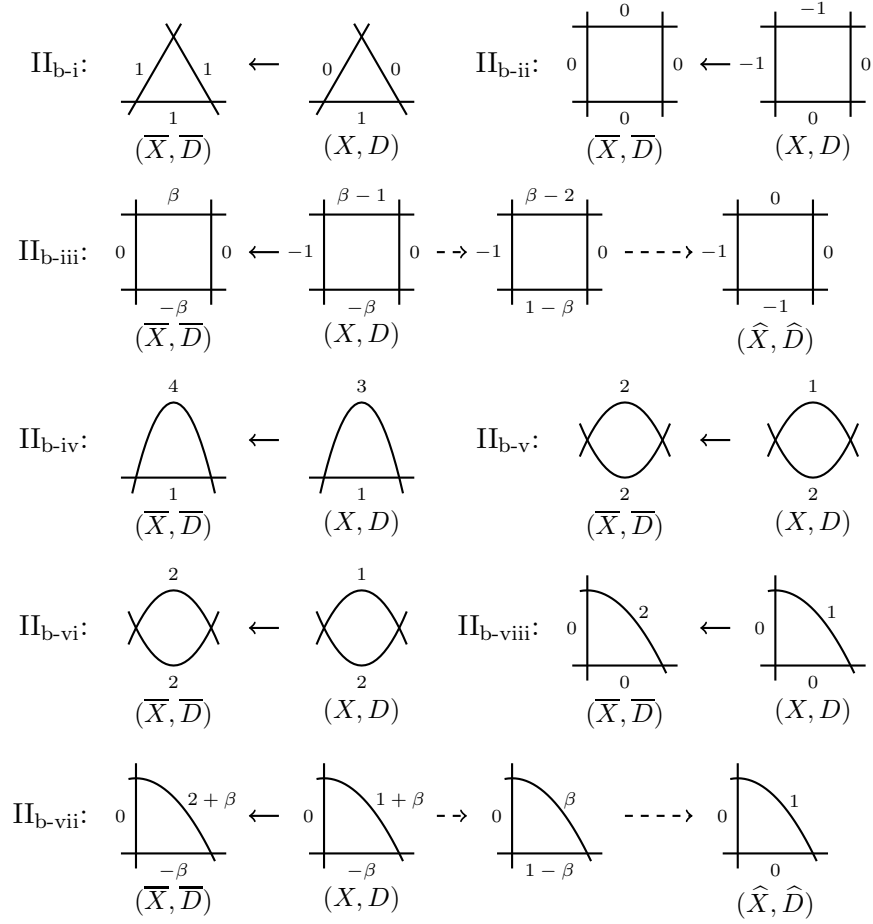
\begin{figure}
\begin{tikzpicture}[scale=1]
\node[left] at (-0.5,0.5) {{$\text{II}_{\text{b-i}}$:}};

\draw[thick] (-0.2,0) -- (1.2,0);
\node[below] at (0.5,0) {{\tiny $1$}};
\node[below] at (0.5,-0.3) {{\small $(\overline{X},\overline{D})$}};
\draw[thick] (-0.1,-.1732) -- (0.6,1.0392);
\node[above,left] at (0.25,0.433) {{\tiny $1$}};
\draw[thick] (1.1,-.1732) -- (0.4, 1.0392);
\node[above,right] at (0.75,0.433) {{\tiny $1$}};

\draw[->, thick] (1.9,0.5) -- (1.5,0.5);

\draw[thick] (2.3,0) -- (3.7,0);
\node[below] at (3,0) {{\tiny $1$}};
\node[below] at (3,-0.3) {{\small $(X,D)$}};
\draw[thick] (2.4,-.1732) -- (3.1,1.0392);
\node[above,left] at (2.75,0.433) {{\tiny $0$}};
\draw[thick] (3.6,-.1732) -- (2.9, 1.0392);
\node[above,right] at (3.25,0.433) {{\tiny $0$}};

\node[left] at (5.5,0.5) {{$\text{II}_{\text{b-ii}}$:}};

\draw[thick] (5.8,0) -- (7.2,0);
\node[below] at (6.5,0) {{\tiny $0$}};
\node[below] at (6.5,-0.3) {{\small $(\overline{X}, \overline{D})$}};
\draw[thick] (6,-0.2) -- (6,1.2);
\node[left] at (6,0.5) {{\tiny $0$}};
\draw[thick] (5.8,1) -- (7.2,1);
\node[above] at (6.5,1) {{\tiny $0$}};
\draw[thick] (7,-0.2) -- (7,1.2);
\node[right] at (7,0.5) {{\tiny $0$}};

\draw[->, thick] (7.9,0.5) -- (7.5,0.5);

\draw[thick] (8.3,0) -- (9.7,0);
\node[below] at (9,0) {{\tiny $0$}};
\node[below] at (9,-0.3) {{\small $(X, D)$}};
\draw[thick] (8.5,-0.2) -- (8.5,1.2);
\node[left] at (8.5,0.5) {{\tiny $-1$}};
\draw[thick] (8.3,1) -- (9.7,1);
\node[above] at (9,1) {{\tiny $-1$}};
\draw[thick] (9.5,-0.2) -- (9.5,1.2);
\node[right] at (9.5,0.5) {{\tiny $0$}};

\node[left] at (-0.5,-2) {{$\text{II}_{\text{b-iii}}$:}};

\draw[thick] (-0.2,-2.5) -- (1.2,-2.5);
\node[below] at (.5,-2.5) {{\tiny $-\beta$}};
\node[below] at (.5,-2.8) {{\small $(\overline{X}, \overline{D})$}};
\draw[thick] (0,-2.7) -- (0,-1.3);
\node[left] at (0,-2) {{\tiny $0$}};
\draw[thick] (-0.2,-1.5) -- (1.2,-1.5);
\node[above] at (.5,-1.5) {{\tiny $\beta$}};
\draw[thick] (1,-2.7) -- (1,-1.3);
\node[right] at (1,-2) {{\tiny $0$}};

\draw[->, thick] (1.9,-2) -- (1.5,-2);

\draw[thick] (2.3,-2.5) -- (3.7,-2.5);
\node[below] at (3,-2.5) {{\tiny $-\beta$}};
\node[below] at (3,-2.8) {{\small $(X, D)$}};
\draw[thick] (2.5,-2.7) -- (2.5,-1.3);
\node[left] at (2.5,-2) {{\tiny $-1$}};
\draw[thick] (2.3,-1.5) -- (3.7,-1.5);
\node[above] at (3,-1.5) {{\tiny $\beta-1$}};
\draw[thick] (3.5,-2.7) -- (3.5,-1.3);
\node[right] at (3.5,-2) {{\tiny $0$}};

\draw[->, dashed, thick] (4,-2) -- (4.4,-2);

\draw[thick] (4.8,-2.5) -- (6.2,-2.5);
\node[below] at (5.5,-2.5) {{\tiny $1-\beta$}};
\draw[thick] (5,-2.7) -- (5,-1.3);
\node[left] at (5,-2) {{\tiny $-1$}};
\draw[thick] (4.8,-1.5) -- (6.2,-1.5);
\node[above] at (5.5,-1.5) {{\tiny $\beta-2$}};
\draw[thick] (6,-2.7) -- (6,-1.3);
\node[right] at (6,-2) {{\tiny $0$}};

\draw[->, dashed, thick] (6.5,-2) -- (7.4,-2);

\draw[thick] (7.8,-2.5) -- (9.2,-2.5);
\node[below] at (8.5,-2.5) {{\tiny $-1$}};
\node[below] at (8.5,-2.8) {{\small $(\widehat{X}, \widehat{D})$}};
\draw[thick] (8,-2.7) -- (8,-1.3);
\node[left] at (8,-2) {{\tiny $-1$}};
\draw[thick] (7.8,-1.5) -- (9.2,-1.5);
\node[above] at (8.5,-1.5) {{\tiny $0$}};
\draw[thick] (9,-2.7) -- (9,-1.3);
\node[right] at (9,-2) {{\tiny $0$}};

\node[left] at (-0.5,-4.5) {{$\text{II}_{\text{b-iv}}$:}};

\draw[thick] (-0.2, -5) -- (1.2,-5);
\node[below] at (0.5,-5) {{\tiny $1$}};
\node[below] at (0.5,-5.3) {{\small $(\overline{X}, \overline{D})$}};
\draw[thick, domain=-0.05:1.05] plot (\x, {4*(\x - \x*\x) - 5});
\node[above] at (0.5,-4) {{\tiny $4$}};

\draw[->, thick] (1.9,-4.5) -- (1.5,-4.5);

\draw[thick] (2.3, -5) -- (3.7,-5);
\node[below] at (3,-5) {{\tiny $1$}};
\node[below] at (3,-5.3) {{\small $(X, D)$}};
\draw[thick, domain=-0.05:1.05] plot (\x + 2.5, {4*(\x - \x*\x) - 5});
\node[above] at (3,-4) {{\tiny $3$}};

\node[left] at (5.5,-4.5) {{$\text{II}_{\text{b-v}}$:}};

\draw[thick, domain=-0.1:1.1] plot (\x + 6, {2*(\x - \x*\x) - 4.5});
\draw[thick, domain=-0.1:1.1] plot (\x + 6, {2*(-\x + \x*\x) - 4.5});
\node[below] at (6.5,-5) {{\tiny $2$}};
\node[below] at (6.5,-5.3) {{\small $(\overline{X}, \overline{D})$}};
\node[above] at (6.5,-4) {{\tiny $2$}};

\draw[->, thick] (7.9,-4.5) -- (7.5,-4.5);

\draw[thick, domain=-0.1:1.1] plot (\x + 8.5, {2*(\x - \x*\x) - 4.5});
\draw[thick, domain=-0.1:1.1] plot (\x + 8.5, {2*(-\x + \x*\x) - 4.5});
\node[below] at (9,-5) {{\tiny $2$}};
\node[below] at (9,-5.3) {{\small $(X, D)$}};
\node[above] at (9,-4) {{\tiny $1$}};

\node[left] at (-0.5,-7) {{$\text{II}_{\text{b-vi}}$:}};

\draw[thick, domain=-0.1:1.1] plot (\x, {2*(\x - \x*\x) - 7});
\draw[thick, domain=-0.1:1.1] plot (\x, {2*(-\x + \x*\x) - 7});
\node[below] at (.5,-7.5) {{\tiny $2$}};
\node[below] at (.5,-7.8) {{\small $(\overline{X}, \overline{D})$}};
\node[above] at (.5,-6.5) {{\tiny $2$}};

\draw[->, thick] (1.9,-7) -- (1.5,-7);

\draw[thick, domain=-0.1:1.1] plot (\x+2.5, {2*(\x - \x*\x) - 7});
\draw[thick, domain=-0.1:1.1] plot (\x+2.5, {2*(-\x + \x*\x) - 7});
\node[below] at (3,-7.5) {{\tiny $2$}};
\node[below] at (3,-7.8) {{\small $(X, D)$}};
\node[above] at (3,-6.5) {{\tiny $1$}};

\node[left] at (5.5,-7) {{$\text{II}_{\text{b-viii}}$:}};

\draw[thick] (5.8, -7.5) -- (7.2,-7.5);
\node[below] at (6.5,-7.5) {{\tiny $0$}};
\node[below] at (6.5,-7.8) {{\small $(\overline{X}, \overline{D})$}};
\draw[thick] (6,-7.7) -- (6,-6.3);
\node[left] at (6,-7) {{\tiny $0$}};
\draw[thick, domain=-0.1:1.05] plot (\x+6, {(1 - \x*\x) - 7.5});
\node[above] at (6.75,-7) {{\tiny $2$}};

\draw[->, thick] (7.9,-7) -- (7.5,-7);

\draw[thick] (8.3, -7.5) -- (9.7,-7.5);
\node[below] at (9,-7.5) {{\tiny $0$}};
\node[below] at (9,-7.8) {{\small $(X, D)$}};
\draw[thick] (8.5,-7.7) -- (8.5,-6.3);
\node[left] at (8.5,-7) {{\tiny $0$}};
\draw[thick, domain=-0.1:1.05] plot (\x+8.5, {(1 - \x*\x) - 7.5});
\node[above] at (9.25,-7) {{\tiny $1$}};

\node[left] at (-0.5,-9.5) {{$\text{II}_{\text{b-vii}}$:}};

\draw[thick] (-0.2, -10) -- (1.2,-10);
\node[below] at (0.5,-10) {{\tiny $-\beta$}};
\node[below] at (0.5,-10.3) {{\small $(\overline{X}, \overline{D})$}};
\draw[thick] (0,-10.2) -- (0,-8.8);
\node[left] at (0,-9.5) {{\tiny $0$}};
\draw[thick, domain=-0.1:1.05] plot (\x, {(1 - \x*\x) - 10});
\node[right] at (0.55,-9.3) {{\tiny $2+\beta$}};

\draw[->, thick] (1.9,-9.5) -- (1.5,-9.5);

\draw[thick] (2.3, -10) -- (3.7,-10);
\node[below] at (3,-10) {{\tiny $-\beta$}};
\node[below] at (3,-10.3) {{\small $(X, D)$}};
\draw[thick] (2.5,-10.2) -- (2.5,-8.8);
\node[left] at (2.5,-9.5) {{\tiny $0$}};
\draw[thick, domain=-0.1:1.05] plot (\x + 2.5, {(1 - \x*\x) - 10});
\node[right] at (3.05,-9.3) {{\tiny $1+\beta$}};

\draw[->, dashed, thick] (4,-9.5) -- (4.4,-9.5);

\draw[thick] (4.8, -10) -- (6.2,-10);
\node[below] at (5.5,-10) {{\tiny $1-\beta$}};
\draw[thick] (5,-10.2) -- (5,-8.8);
\node[left] at (5,-9.5) {{\tiny $0$}};
\draw[thick, domain=-0.1:1.05] plot (\x + 5, {(1 - \x*\x) - 10});
\node[right] at (5.55,-9.3) {{\tiny $\beta$}};

\draw[->, dashed, thick] (6.5,-9.5) -- (7.4,-9.5);

\draw[thick] (7.8, -10) -- (9.2,-10);
\node[below] at (8.5,-10) {{\tiny $0$}};
\node[below] at (8.5,-10.3) {{\small $(\widehat{X}, \widehat{D})$}};
\draw[thick] (8,-10.2) -- (8,-8.8);
\node[left] at (8,-9.5) {{\tiny $0$}};
\draw[thick, domain=-0.1:1.05] plot (\x + 8, {(1 - \x*\x) - 10});
\node[right] at (8.55,-9.3) {{\tiny $1$}};

\end{tikzpicture}
\caption{Log K3 surfaces of type $\text{II}_{\text{b-i}}$-$\text{II}_{\text{b-viii}}$ without infinitely many $\A^1$ curves}
\label{LOGK3RATFIGB2B4000}
\end{figure}

\end{document}